\numberwithin{equation}{section}
\numberwithin{figure}{section}
\theoremstyle{plain}
\newtheorem{thm}{\protect\theoremname}[section]
  \theoremstyle{definition}
  \newtheorem{defn}[thm]{\protect\definitionname}
  \theoremstyle{plain}
  \newtheorem{prop}[thm]{\protect\propositionname}
  \theoremstyle{plain}
  \newtheorem{cor}[thm]{\protect\corollaryname}
  \theoremstyle{remark}
  \newtheorem*{acknowledgement*}{\protect\acknowledgementname}
  \theoremstyle{plain}
  \newtheorem{lem}[thm]{\protect\lemmaname}
  \theoremstyle{plain}
  \newtheorem*{thm*}{\protect\theoremname}
  \theoremstyle{remark}
  \newtheorem{rem}[thm]{\protect\remarkname}
  \theoremstyle{plain}
  \newtheorem{question}[thm]{\protect\questionname}
\newtheorem{thmx}{Theorem}
\newtheorem{example}[thm]{Example}
  \providecommand{\acknowledgementname}{Acknowledgement}
  \providecommand{\corollaryname}{Corollary}
  \providecommand{\definitionname}{Definition}
  \providecommand{\lemmaname}{Lemma}
  \providecommand{\propositionname}{Proposition}
  \providecommand{\questionname}{Question}
  \providecommand{\remarkname}{Remark}
  \providecommand{\theoremname}{Theorem}
\def\Qp{\mathbb Q_p}
\def\Zp{\mathbb{Z}_p}
\def\val{\mathrm{val}}
\def\ac{\mathrm{ac}}
\def\complex{\mathbb{C}}
\def\reals{\mathbb{R}}
\def\nats{\mathbb{N}}
\def\ints{\mathbb{Z}}
\def\rats{\mathbb{Q}}
\def\supp{\mathrm{supp}}
\def\VF{\mathrm{VF}}
\def\RF{\mathrm{RF}}
\def\VG{\mathrm{VG}}
\def\spec{\mathrm{Spec}}
\def\sinc{\mathrm{sinc}}
\def\Loc{\mathrm{Loc}}
\def\Ldp{\mathcal{L}_\mathrm{DP}}
\title[On singularity properties of convolutions of algebraic morphisms]{On singularity properties of convolutions of algebraic morphisms
- the general case}
\author{Itay Glazer and Yotam I. Hendel (with an appendix joint with Gady
Kozma)}
\subjclass[2010]{14B05, 14E18, 03C98 (primary), 11G25, 14G05, 20G15 (secondary).}
\begin{document}
\maketitle
\begin{abstract}
Let $K$ be a field of characteristic zero, $X$ and $Y$ be smooth
$K$-varieties, and let $G$ be an algebraic $K$-group. Given two
algebraic morphisms $\varphi:X\rightarrow G$ and $\psi:Y\rightarrow G$,
we define their convolution $\varphi*\psi:X\times Y\to G$ by $\varphi*\psi(x,y)=\varphi(x)\cdot\psi(y)$.
We then show that this operation yields morphisms with improved smoothness
properties. More precisely, we show that for any morphism $\varphi:X\rightarrow G$
which is dominant when restricted to each geometrically irreducible
component of $X$, by convolving it with itself finitely many times,
one obtains a flat morphism with reduced fibers of rational singularities.
Uniform
bounds on families of morphisms are given as well. Moreover, as a
key analytic step, we also prove the following result in motivic integration;
if $\{f_{\mathbb{Q}_{p}}:\mathbb{Q}_{p}^{n}\rightarrow\mathbb{C}\}_{p\in\mathrm{primes}}$
is a collection of motivic functions, and $f_{\mathbb{Q}_{p}}$ is
$L^{1}$ for any $p$ large enough, then in fact there exists $\epsilon>0$
such that $f_{\mathbb{Q}_{p}}$ is $L^{1+\epsilon}$ for any $p$
large enough. 
\end{abstract}

\pagenumbering{arabic} \tableofcontents{}

\global\long\def\nats{\mathbb{N}}%
 
\global\long\def\reals{\mathbb{\mathbb{R}}}%
 
\global\long\def\ints{\mathbb{Z}}%
 
\global\long\def\val{\mathbb{\mathrm{val}}}%
 
\global\long\def\Qp{\mathbb{Q}_{p}}%
 
\global\long\def\Zp{\mathbb{\mathbb{Z}}_{p}}%
 
\global\long\def\ac{\mathbb{\mathrm{ac}}}%
 
\global\long\def\complex{\mathbb{\mathbb{C}}}%
 
\global\long\def\rats{\mathbb{\mathbb{Q}}}%
 
\global\long\def\supp{\mathbb{\mathrm{supp}}}%
 
\global\long\def\VF{\mathbb{\mathrm{VF}}}%
 
\global\long\def\RF{\mathbb{\mathrm{RF}}}%
 
\global\long\def\VG{\mathbb{\mathrm{VG}}}%
 
\global\long\def\spec{\mathbb{\mathrm{Spec}}}%
 
\global\long\def\Ldp{\mathbb{\mathcal{L}_{\mathrm{DP}}}}%

\raggedbottom

\section{Introduction}

In this paper we continue the investigation of the algebraic convolution
operation, which was initiated in \cite{GH19}. Namely, we study the
following operation: 
\begin{defn}
\label{def:convolution} Let $X_{1}$ and $X_{2}$ be algebraic varieties,
$G$ an algebraic group and let $\varphi_{1}:X_{1}\to G$ and $\varphi_{2}:X_{2}\to G$
be algebraic morphisms. We define their convolution by 
\begin{gather*}
\varphi_{1}*\varphi_{2}:X_{1}\times X_{2}\to G\\
\varphi_{1}*\varphi_{2}(x_{1},x_{2})=\varphi_{1}(x_{1})\cdot\varphi_{2}(x_{2}).
\end{gather*}
In particular, the $n$-th convolution power of a morphism $\varphi:X\rightarrow G$
is 
\[
\varphi^{*n}(x_{1},\ldots,x_{n}):=\varphi(x_{1})\cdot\ldots\cdot\varphi(x_{n}).
\]
\end{defn}

The convolution operation as above can be viewed as a geometric version
of the classical convolution as follows. Firstly, recall that given
$f_{1},f_{2}\in L^{1}(\reals^{n})$, their convolution is defined
by 
\[
(f_{1}\ast f_{2})(x)=\int_{\reals^{n}}f_{1}(t)f_{2}(x-t)dt,
\]
and it has improved smoothness properties, e.g. 
\begin{itemize}
\item if $f_{1}\in C^{k}(\reals^{n})$ and $f_{2}\in C^{l}(\reals^{n})$,
then $(f_{1}*f_{2})'=f_{1}'*f_{2}=f_{1}*f_{2}'$ and therefore $f_{1}*f_{2}\in C^{k+l}(\reals^{n})$. 
\item In particular, if $f_{1}$ is smooth, then $f_{1}*f_{2}$ is smooth
for every $f_{2}\in L^{1}(\reals^{n})$. 
\end{itemize}
Now, for morphisms $\varphi_{i}:X_{i}\to G$ for $i=1,2$ as before,
consider the functions 
\[
F_{\varphi_{i}}:G\to\mathrm{Schemes}\text{ by }F_{\varphi_{i}}(g)=\varphi_{i}^{-1}(g).
\]
Given a finite ring $A$, we naturally get maps $(\varphi_{i})_{A}:X_{i}(A)\to G(A)$
and $(\varphi_{1}*\varphi_{2})_{A}:X_{1}(A)\times X_{2}(A)\to G(A)$
from finite sets to the finite group $G(A)$, and furthermore, 
\[
(\varphi_{1}*\varphi_{2})_{A}^{-1}(s)=\biguplus\limits _{g\in G(A)}(\varphi_{1})_{A}^{-1}(g)\times(\varphi_{2})_{A}^{-1}(g^{-1}s).
\]
In particular, if we set $|F_{(\varphi_{i})_{A}}|:G(A)\to\ints_{\geq0}$
to be the function counting the size of the fibers, i.e.~$|F_{(\varphi_{i})_{A}}|(g)=|(\varphi_{i})_{A}^{-1}(g)|$,
we see that the algebraic convolution operation commutes with counting
points over finite rings: 
\[
|F_{(\varphi_{1})_{A}}|*|F_{(\varphi_{2})_{A}}|(s)=\sum_{g\in G(A)}|F_{(\varphi_{1})_{A}}|(g)\cdot|F_{(\varphi_{2})_{A}}|(g^{-1}s)=|F_{(\varphi_{1}*\varphi_{2})_{A}}|(s).
\]
It is thus natural to ask whether analogously to the analytic convolution
operation, the algebraic convolution operation improves smoothness
properties of morphisms: \begin{question}\label{que:(convolution}Let
$\varphi_{i}:X_{i}\rightarrow G$ for $i=1,2$ be two morphisms from
varieties $X_{1}$ and $X_{2}$ to an algebraic group $G$, and assume
that $\varphi_{1}$ satisfies a singularity property $S$. 
\begin{enumerate}
\item When does $\varphi_{1}*\varphi_{2}$ have property $S$ as well? 
\item Which singularity properties can one obtain after finitely many self-convolutions
of $\varphi_{1}$? 
\end{enumerate}
\end{question} Concerning (1), the following proposition shows that
the convolution operation preserves singularity properties of morphisms
in the following sense: 
\begin{prop}[{\cite[Proposition 3.1]{GH19}}]
\label{prop:convpreservesgoodproperties} Let $X$ and $Y$ be varieties
over a field $K$, let $G$ be an algebraic group over $K$ and let
$S$ be a property of morphisms that is preserved under base change
and compositions. If $\varphi:X\to G$ is a morphism that satisfies
the property $S$, the natural map $i_{K}:Y\to\mathrm{Spec}(K)$ has
property $S$ and $\psi:Y\to G$ is arbitrary, then $\varphi*\psi$
and $\psi*\varphi$ has property $S$. 
\end{prop}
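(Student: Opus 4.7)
The plan is to factor the convolution $\varphi*\psi:X\times Y\to G$ as a composition of three morphisms, each of which visibly has property $S$, and then conclude by the stated closure of $S$ under composition.

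Concretely, I would write
\[
\varphi*\psi:\ X\times Y\xrightarrow{\varphi\times\mathrm{id}_Y}G\times Y\xrightarrow{\sigma}G\times Y\xrightarrow{\pi_G}G,
\]
where $\sigma$ is the $K$-automorphism given on points by $\sigma(g,y)=(g\cdot\psi(y),y)$, with inverse $(g,y)\mapsto(g\cdot\psi(y)^{-1},y)$, and $\pi_G$ is projection onto the first factor. Tracing a point through the composition gives $(x,y)\mapsto(\varphi(x),y)\mapsto(\varphi(x)\psi(y),y)\mapsto\varphi(x)\psi(y)$, as required. The existence of $\sigma$ as a genuine morphism of $K$-varieties uses only the group operations of $G$ together with $\psi$, so it is globally defined.

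Next I would identify each factor as having $S$. The first arrow $\varphi\times\mathrm{id}_Y$ is the base change of $\varphi:X\to G$ along the structure morphism $Y\to\mathrm{Spec}(K)$, so it inherits $S$ from $\varphi$. The third arrow $\pi_G$ is the base change of $i_K:Y\to\mathrm{Spec}(K)$ along $G\to\mathrm{Spec}(K)$, so it inherits $S$ from $i_K$. The middle arrow $\sigma$ is a $K$-isomorphism; under the standard convention that $S$ contains identities (satisfied by every property relevant to the paper, e.g.\ flatness, smoothness, and the FRS condition), $\sigma$ then has $S$ as well, since it fits into a Cartesian square with $\mathrm{id}$ along the right, $\sigma$ along the left, and $\mathrm{id}$ along the bottom, exhibiting it as a base change of the identity. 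Applying closure under composition to the three arrows gives the conclusion for $\varphi*\psi$.

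For $\psi*\varphi:Y\times X\to G$, I would run the mirror argument with the factorization
\[
\psi*\varphi:\ Y\times X\xrightarrow{\mathrm{id}_Y\times\varphi}Y\times G\xrightarrow{\sigma'}Y\times G\xrightarrow{\pi_G}G,
\]
where $\sigma'(y,g)=(y,\psi(y)\cdot g)$. There is no serious obstacle in this proof; the crux is simply spotting the explicit factorization, which exhibits the convolution as an isomorphism sandwiched between a base change of $\varphi$ and a base change of $i_K$, after which the closure hypotheses on $S$ do all the work.
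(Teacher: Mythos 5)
Your proof is correct and is essentially the standard argument for \cite[Proposition 3.1]{GH} (which this paper cites rather than reproves): factor $\varphi*\psi$ through $G\times Y$ as a base change of $\varphi$, followed by the shear automorphism $(g,y)\mapsto(g\cdot\psi(y),y)$, followed by the projection, which is a base change of $i_{K}$. The only cosmetic difference is your extra convention that identities have $S$; it can be avoided by absorbing the shear into the projection and observing that the map $G\times Y\to G$, $(g,y)\mapsto g\cdot\psi(y)$, together with the projection to $Y$, already fits into a Cartesian square over $i_{K}:Y\to\mathrm{Spec}(K)$ (the shear being exactly the identification with $G\times_{\mathrm{Spec}(K)}Y$), so it is itself a base change of $i_{K}$ and the conclusion follows from closure under composition alone.
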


The rest of this paper is devoted to the study of the second part
of Question \ref{que:(convolution}, and generalizes the results from
\cite{GH19} in which the case where $G=V$ is a vector space was
dealt with.

If $\varphi$ is not smooth, then one in general can not guarantee
that some convolution power of $\varphi$ will be smooth (e.g. $\varphi:\mathbb{A}^{1}\to(\mathbb{A}^{1},+)$
via $x\mapsto x^{2}$, see Proposition \ref{prop:not smooth after convolutions}
for a more general statement). However, it is possible to achieve
other singularity properties as in Theorems \ref{Main result} and
\ref{thm: singularity properties obtained after convolution} in the
next section.

From here henceforth let $K$ denote a field of characteristic $0$.
The following property plays a key role in this paper: 
\begin{defn}[{The (FRS) property}]
\label{def:(FRS)} Let $X$ and $Y$ be smooth $K$-varieties. We
say that a morphism $\varphi:X\rightarrow Y$ is (FRS) if it is flat
and if every fiber of $\varphi$ is reduced and has rational singularities
(for rational singularities see Definition \ref{defn: rational sings}). 
\end{defn}

The (FRS) property was first introduced in \cite{AA16}, where it
was proved that for any semi-simple algebraic group $G$ the commutator
map $[\cdot,\cdot]:G\times G\to G$ is (FRS) after $21$ self-convolutions.
This was then used to show in \cite{AA16} and \cite{AA18} that if
$\Gamma$ is a compact $p$-adic group or an arithmetic group of higher
rank then its representation growth is polynomial and does not depend
on $\Gamma$. Explicitly, for $\Gamma$ as above and every $c>40$
it holds that 
\[
r_{n}(\Gamma):=\#\{\text{irreducible }n\text{-dimensional }\complex\text{-representations of }\Gamma\text{ up to equivalence}\}=o(n^{c}).
\]
It was furthermore proved in \cite{AA18}, based on works of Denef
\cite{Den87} and Musta\c{t}\u{a} \cite{Mus01}, that each individual fiber of an (FRS)
morphism has good asymptotic point count over finite rings of the
form $\ints/p^{k}\ints$ (either in $p$ or in $k$, see \cite[Theorem A]{AA18}
and \cite[Theorem 1.4]{Gla19}). 
In an upcoming work, we prove a version of this result which is uniform as the fiber varies. 
This allows one to interpret Question
\ref{que:(convolution}(2) with respect to the (FRS) property in a
probabilistic way: given $\varphi:X\to G$, then the (FRS) property
of $\varphi^{*n}$ can be reformulated in terms of uniform $L^{\infty}$-boundedness
after $n$ steps, of a family of random walks on $\{G(\ints/p^{k}\ints)\}_{p,k}$,
which is obtained by pushing forward the family of uniform probability
measures on $\{X(\ints/p^{k}\ints)\}_{p,k}$ under $\varphi$. 
In particular, the results and methods 
of this paper are used in \cite{GHb} to investigate families of random walks on compact $p$-adic groups which are induced by words\footnote{By a word $w$ we mean an element of the free group  on $r$  generators. Such $w$ induces a map $w_G:G^r\to G$ on any group $G$. If $G$ is an algebraic group, $w_G$ is an algebraic morphism and we can use algebraic geometry to study it.}.
For a precise discussion of this probabilistic interpretation see  \cite[Section 9]{GHb}, and for further discussion of the (FRS) property and its implications
see \cite[Section 1.3]{GH19} or  \cite{AA18,AA16}.

\subsection{Main results}

\subsubsection{Algebro-geometric results}
\begin{defn}
We say that a $K$-morphism $\varphi:X\to Y$ is \textit{strongly
dominant} if it is dominant when restricted to each geometrically
irreducible component of $X$. 
\end{defn}

In this paper we verify a conjecture of Aizenbud and Avni (see \cite[Conjecture 1.6]{GH19}),
showing that every strongly dominant morphism into an algebraic group
becomes (FRS) after finitely many self-convolutions: \begin{thmx}[Theorem
\ref{Main result v2}] \label{Main result} Let $X$ be a smooth
$K$-variety, $G$ be a connected algebraic $K$-group and let $\varphi:X\to G$
be a strongly dominant morphism. Then there exists $N\in\mathbb{N}$
such that for any $n>N$, the $n$-th convolution power $\varphi^{*n}$
is (FRS). \end{thmx}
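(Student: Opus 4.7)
The plan is to adapt the strategy of \cite{GH} --- which handled the case $G = V$ a vector space --- to general algebraic groups, the essential new ingredients being the motivic $L^{1+\epsilon}$ theorem announced in the abstract and a geometric argument exploiting the connectedness of $G$. The framework rests on the analytic reformulation of (FRS) used in \cite{AA18}: fixing a model of $\varphi$, one attaches (for each sufficiently large prime $p$) the pushforward density $f_{\varphi,p}:G(\Qp)\to\reals_{\geq 0}$ obtained by pushing the normalized Haar measure on $X(\Zp)$ forward under $\varphi_{\Qp}$, and $\varphi^{*n}$ is (FRS) precisely when the convolution powers $f_{\varphi,p}^{*n}$ on $G(\Qp)$ are bounded in $L^{\infty}$ uniformly in $p$. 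The target is therefore to show that for $N$ large enough the family $\{f_{\varphi,p}^{*N}\}_{p}$ is uniformly $L^\infty$-bounded.

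The argument I have in mind proceeds through three successive integrability gains. First, I would combine strong dominance with the connectedness of $G$ to prove that $f_{\varphi,p}^{*N_{1}}\in L^{1}(G(\Qp))$ uniformly in $p$, for some $N_{1}$. Strong dominance implies that $\varphi(X)$ is Zariski-dense in $G$; since $G$ is connected, a classical fact --- a dense constructible subset of a connected algebraic group covers the whole group after boundedly many multiplications --- then shows that $\varphi^{*N_{1}}:X^{N_{1}}\to G$ is surjective for $N_1$ large, and by generic flatness over $G$ combined with an analysis of fiber dimensions one obtains the uniform $L^{1}$ bound on the density.

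The crucial second step is to bootstrap $L^{1}$ to $L^{1+\epsilon}$ via the motivic $L^{1+\epsilon}$ theorem: the family $\{f_{\varphi,p}^{*N_{1}}\}$ is motivic in the Denef-Pas sense, and uniform $L^{1}$-integrability for all large $p$ implies, by that theorem, uniform $L^{1+\epsilon}$-integrability for some $\epsilon>0$ and all large $p$. The third step is then purely analytic: Young's convolution inequality on the locally compact group $G(\Qp)$ (with $1/r=1/p+1/q-1$) produces a strict integrability gain at each further self-convolution, and after $O(1/\epsilon)$ more convolutions the density enters $L^{\infty}(G(\Qp))$ with bounds uniform in $p$. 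Translating back through the analytic characterization yields the (FRS) property for $\varphi^{*N}$ for some $N$, and the passage to all $n>N$ follows from Proposition \ref{prop:convpreservesgoodproperties} applied with $S=(\text{FRS})$, since (FRS) is preserved under base change and composition with smooth morphisms.

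The main obstacle is certainly the motivic $L^{1+\epsilon}$ theorem itself. Its proof requires a careful analysis, in the Denef-Pas / Cluckers-Loeser framework of motivic functions, of how the integrability exponent of a motivic family varies with $p$; cell decomposition and an effective control of the relevant exponential sums should force a uniform strict gain in the exponent from a mere uniform $L^{1}$-bound. Without uniformity in $p$ the Young's-inequality bootstrap could not be transported across all primes, and the $p$-adic analytic characterization of (FRS) --- which inherently demands such uniformity --- would be out of reach; this is precisely the bridge that allows the geometric input (Step 1) and the analytic conclusion (Step 3) to meet.
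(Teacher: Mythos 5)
Your core argument is the same as the paper's: establish that the pushforward density is motivic and $L^{1}$, bootstrap to $L^{1+\epsilon}$ uniformly via the motivic theorem (this is exactly Corollary \ref{Cor:Main model theoretic result for algebraic groups}), use Young's inequality to get bounded/continuous density after $O(1/\epsilon)$ further self-convolutions (Lemma \ref{lem: continuous function after enough convolutions}), translate back through the analytic characterization of (FRS) (Theorem \ref{Analytic condition for (FRS)} via Proposition \ref{prop:reduction to an analytic}), and handle $n>N$ by Proposition \ref{prop: properties preserved under convolution}. Two remarks on the details. First, your Step 1 is a detour: surjectivity of $\varphi^{*N_{1}}$ and connectedness of $G$ are irrelevant to integrability. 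What is actually needed is that $\varphi_{*}\mu_{F}$ has \emph{no singular part}, which follows already for $N_{1}=1$ from strong dominance together with generic smoothness (the critical locus is a proper closed subset of each component, hence of measure zero), as in \cite[Corollary 3.6]{AA16}; once absolute continuity holds, the $L^{1}$ bound is trivial since the total mass is $1$. Second, your (FRS) criterion "uniform $L^{\infty}$ over $G(\Qp)$, uniformly in $p$" is the \cite{AA18}-style point-counting reformulation; the criterion the paper actually runs on is continuity of the density for each $F\in\Loc_{>}$ (including finite extensions of $\Qp$, which are needed to detect (FRS) at all $\overline{\rats}$-points), but your Young-inequality step produces continuity as well ($L^{2}*L^{2}$ is continuous), so this is only an imprecision of formulation.

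The genuine gap is the base field. The theorem is stated for an arbitrary field $K$ of characteristic zero, while your entire framework (a model over $\ints$, primes $p$, densities on $G(\Qp)$, the Denef--Pas motivic theorem) only makes sense when $\varphi$ is defined over $\rats$. The paper devotes Proposition \ref{prop:reduction to Q} to reducing general $K$ to $\rats$, and in the non-abelian setting this is not free: one first gets (FRS) of $\varphi^{*N}$ only at diagonal points $(x,\ldots,x)$ over finitely generated fields, and then must upgrade to all tuples; this doubling step (Proposition \ref{prop: generalization of 6.3}) mixes different measures $\mu_{1},\ldots,\mu_{2N}$ and is carried out with the non-commutative Fourier transform and a H\"older inequality for Schatten norms (Proposition \ref{prop:(Generalization-of-Holder}), replacing the commutative Fourier argument of \cite{GH}. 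Your proposal contains no mechanism for passing from $\rats$ (or number fields embeddable in $\Qp$) to, say, $K=\complex$ or $K=\complex(t)$, so as written it proves the theorem only over $\rats$; you would need to add a spreading-out/specialization argument of this kind to close it.
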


Note that by \cite[Theorem 3.4]{AA16} and \cite[Theorem 1.11]{Rei18},
Theorem \ref{Main result} implies the following: 
\begin{cor}
Let $F$ be a local field of characteristic $0$. Let $X$ be a smooth
$F$-variety, $G$ be a connected algebraic $F$-group and let $\varphi:X\to G$
be a strongly dominant morphism. Then there exists $N\in\nats$ such
that for every $n>N$ and any smooth\footnote{{See Definition \ref{def: smooth and locally Ls} for the case where $F$ 
 is a non-Archimedean local field.}}, compactly supported
measure $\mu$ on $X(F)$, the $n$-th convolution power $\tau^{*n}$ of the pushforward $\tau:=\varphi_{*}(\mu)$
 has continuous density.
\end{cor}

It is easy to see that one cannot give a universal bound (i.e.~independent
of the map) on the number of convolutions needed in order to obtain
an (FRS) morphism. For example, the morphism $\varphi(x)=x^{n}$ requires
$n+1$ self convolutions in order to become an (FRS) morphism. For
other properties as below, an upper bound depending only on $\mathrm{dim}G$
can be given:

\begin{thmx}[see Propositions \ref{prop: upper bounds for properties}
and \ref{prop:The-bounds-are tight}] \label{thm: singularity properties obtained after convolution}
Let $m\in\nats$, let $X_{1},\ldots,X_{m}$ be smooth $K$-varieties,
let $G$ be a connected algebraic $K$-group, and let $\{\varphi_{i}:X_{i}\rightarrow G\}_{i=1}^{m}$
be a collection of strongly dominant morphisms. 
\begin{enumerate}
\item For any $1\leq i,j\leq m$ the morphism $\varphi_{i}*\varphi_{j}$
is surjective. 
\item If $m\geq\mathrm{dim}G$ then $\varphi_{1}*\ldots*\varphi_{m}$ is
flat. 
\item If $m\geq\mathrm{dim}G+1$ then $\varphi_{1}*\ldots*\varphi_{m}$
is flat with reduced fibers. 
\item If $m\geq\mathrm{dim}G+2$ then $\varphi_{1}*\ldots*\varphi_{m}$
is flat with normal fibers. 
\item If $m\geq\mathrm{dim}G+k$, with $k>2$, then $\varphi_{1}*\ldots*\varphi_{m}$
is flat with normal fibers which are regular in codimension $k-1$. 
\end{enumerate}
Furthermore, these bounds are tight. \end{thmx}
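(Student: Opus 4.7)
My plan is to handle part (1) directly and to reduce parts (2)--(5) to codimension estimates via miracle flatness together with a translation / Kleiman transversality argument exploiting the group structure of $G$.

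For part (1), strong dominance of each $\varphi_i$ combined with Chevalley's theorem on constructible images gives that $\varphi_i(X_i)$ contains a dense open $U_i\subseteq G$. Since $G$ is connected, hence irreducible, the dense open subsets $U_i$ and $gU_j^{-1}$ meet for every $g\in G$, and any point of their intersection yields a preimage of $g$ under $\varphi_i*\varphi_j$.

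For parts (2)--(5), the workhorse is miracle flatness: the source $X_1\times\cdots\times X_m$ is smooth (hence Cohen--Macaulay) and the target $G$ is regular, so $\phi^{(m)}:=\varphi_1*\cdots*\varphi_m$ is flat iff every fiber has the expected dimension $\sum_i\dim X_i-\dim G$; once flatness is established the fibers are automatically Cohen--Macaulay, so by Serre's criterion ``reduced'', ``normal'', and ``regular in codimension $k-1$'' all reduce to the conditions $R_0$, $R_1$, $R_{k-1}$ on fibers. The main technical input is upper-semicontinuity of fiber dimension, which gives for each strongly dominant $\varphi_i$ and each $c\ge 1$ the bound
\[
\mathrm{codim}_G\{g\in G:\dim\varphi_i^{-1}(g)\ge\dim X_i-\dim G+c\}\ge c,
\]
and analogous codimension bounds for the locus of $g$ over which $\varphi_i^{-1}(g)$ fails a prescribed $R_j$ condition, obtained from generic smoothness applied on each absolutely irreducible component of $X_i$.

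To transfer these bounds from the individual $\varphi_i$ to $\phi^{(m)}$, observe that $(\phi^{(m)})^{-1}(g)=(\prod_i\varphi_i)^{-1}(\mu_m^{-1}(g))$, where $\mu_m:G^m\to G$ denotes the $m$-fold product and $\mu_m^{-1}(g)\cong G^{m-1}$ has codimension $\dim G$ in $G^m$. The fibers $\mu_m^{-1}(g)$ for varying $g$ are related by translations of the individual factors (for instance, right translation of $\varphi_m$ reaches every fiber), and Kleiman's Bertini-type theorem applied to the resulting homogeneous actions yields transversality of the intersection with $(\prod_i\varphi_i)(\prod_i X_i)$, with the desired fiber dimension and $R_j$-regularity, for a \emph{generic} choice of translations. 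The main obstacle, and the source of the offsets $+1$, $+2$, $+k$, is to upgrade this generic statement to a universal one valid for \emph{every} $g$: each additional convolution factor contributes one unit of codimension slack via the bound above, and after $m\ge\dim G+k$ convolutions enough slack accumulates to rule out bad behaviour on every fiber rather than only on the generic translate. Tightness of the bounds is then demonstrated by low-dimensional examples of the $x\mapsto x^n$ type on $\mathbb{G}_a$ or tori, where each successive convolution can be shown to be necessary to attain the next fiber property.
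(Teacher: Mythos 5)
Your argument for part (1) is correct and matches the paper's (both rest on the fact that the product of two dense open subsets of a connected group is the whole group). For parts (2)--(5), however, you take a genuinely different route, and as written it has real gaps. The paper's proof (Proposition~\ref{prop: upper bounds for properties}) rests on a single short observation in the \emph{source}: since smoothness is preserved under convolution (Proposition~\ref{prop:convpreservesgoodproperties}), if even one $\varphi_j$ is smooth at $x_j$ then $\varphi_1*\cdots*\varphi_m$ is smooth at $(x_1,\ldots,x_m)$, giving
\[
(X_1\times\cdots\times X_m)^{\mathrm{ns},\varphi_1*\cdots*\varphi_m}\subseteq X_1^{\mathrm{ns},\varphi_1}\times\cdots\times X_m^{\mathrm{ns},\varphi_m},
\]
hence the non-smooth locus of $\varphi_1*\cdots*\varphi_m$ has codimension at least $m$ in the source. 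Since every irreducible component of every fiber has codimension at most $\dim G$, for $m\geq\dim G$ no such component can be contained in the non-smooth locus; miracle flatness then gives flatness, and intersecting the non-smooth locus with a fiber (now of codimension at least $m-\dim G$ there) combined with Cohen--Macaulayness of the fibers and Serre's criterion gives reducedness, normality, and $R_{k-1}$. No statement about loci in the target $G$ and no transversality theorem is used.

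Your proposal instead tries to bound the codimension in $G$ of loci of bad fibers for each $\varphi_i$ and then to propagate by Kleiman transversality via group translations. Two concrete problems. First, the asserted bound $\mathrm{codim}_G\{g\in G:\dim\varphi_i^{-1}(g)\geq\dim X_i-\dim G+c\}\geq c$ is not a consequence of upper-semicontinuity (Chevalley gives closedness of the excess-dimension locus in the \emph{source}, not a codimension estimate in the target); you would need to prove it for arbitrary strongly dominant, non-proper $\varphi_i$, and it is not clear it holds in that generality. Second, Kleiman's theorem concerns a \emph{generic} translate under a transitive action on a homogeneous space; the actions you need here are not of that form, the morphisms are not proper, and your proposed upgrade from generic $g$ to all $g$ by ``accumulating codimension slack'' is asserted rather than argued, whereas the paper's source-side argument makes the universality over all $g$ automatic. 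Finally, on tightness: the scalar example $x\mapsto x^n$ on $\mathbb{G}_a$ only probes the case $\dim G=1$; to witness tightness of all the $\dim G$-dependent thresholds the paper uses the $m$-dimensional example $\varphi(x_1,\ldots,x_m)=(x_1^2,(x_1x_2)^2,\ldots,(x_1x_m)^2)$ into $(\mathbb{A}^m,+)$.
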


It is a consequence of \cite{Elk78} and \cite[Corollary 2.2]{AA16},
that the (FRS) property is preserved under small deformations. This
allows us to extend our main result, Theorem \ref{Main result}, to
families of morphisms (thus generalizing \cite[Theorem 7.1]{GH19}):
\begin{thmx}\label{main result for families}Let $K$ and $G$ be
as in Theorem \ref{Main result}, let $Y$ be a $K$-variety, let
$\widetilde{X}$ be a family of varieties over $Y$, and let $\widetilde{\varphi}:\widetilde{X}\rightarrow G\times Y$
be a $Y$-morphism. Denote by $\widetilde{\varphi}_{y}:\widetilde{X}_{y}\rightarrow G$
the fiber of $\widetilde{\varphi}$ at $y\in Y$. Then, 
\begin{enumerate}
\item The set $Y':=\{y\in Y:\widetilde{X}_{y}\text{ is smooth and }\widetilde{\varphi}_{y}:\widetilde{X}_{y}\rightarrow G\text{ is strongly dominant}\}$
is constructible. 
\item There exists $N\in\nats$ such that for any $n>N$, and any $n$ points
$y_{1},\ldots,y_{n}\in Y'$, the morphism $\widetilde{\varphi}_{y_{1}}*\dots*\widetilde{\varphi}_{y_{n}}:\widetilde{X}_{y_{1}}\times\dots\times\widetilde{X}_{y_{n}}\rightarrow G$
is (FRS). 
\end{enumerate}
\end{thmx}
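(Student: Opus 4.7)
For Part (1), constructibility of $Y'$ follows from two standard facts. Smoothness of the fibers $\widetilde{X}_y$ is a constructible condition on $y$, via the Jacobian criterion applied in the family together with Chevalley's theorem on images. Strong dominance of $\widetilde{\varphi}_y$ is also constructible: after stratifying $Y$ so that the absolutely irreducible components of the geometric fibers $\widetilde{X}_y$ form a finite family tracked uniformly across each stratum, dominance of each such component onto $G$ becomes a Chevalley-type condition. Intersecting these conditions shows that $Y'$ is constructible.

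For Part (2), the strategy is a Noetherian induction on a stratification of $(Y')^n$, powered by four ingredients: (i) Theorem \ref{Main result} applied to strongly dominant morphisms over various residue fields; (ii) the fact from \cite{Elk78} and \cite[Corollary 2.2]{AA16} that the (FRS) property is preserved under small deformations, so that the fiberwise (FRS) locus in a family of morphisms between smooth varieties is open; (iii) the disjoint-union trick, namely that for morphisms $\varphi_j:X_j\to G$ one has
\[
\Bigl(\bigsqcup_{j}\varphi_j\Bigr)^{*n} \;=\; \bigsqcup_{(j_1,\ldots,j_n)}\varphi_{j_1}*\cdots*\varphi_{j_n},
\]
so (FRS) of the left-hand side forces (FRS) of every mixed convolution on the right; and (iv) the monotonicity furnished by Proposition \ref{prop:convpreservesgoodproperties} --- convolving an (FRS) morphism with any morphism from a smooth source preserves (FRS), so additional factors cannot destroy the property once obtained.

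With these in hand, I would stratify $Y'$ using Part (1) into finitely many irreducible locally closed pieces. At the generic points $\eta_1,\ldots,\eta_m$ of the top-dimensional strata, each $\widetilde{\varphi}_{\eta_j}$ is strongly dominant from a smooth variety over its residue field; after base change to a common field $L$, the disjoint union $\Phi:=\bigsqcup_j (\widetilde{\varphi}_{\eta_j})_L$ is itself strongly dominant from a smooth $L$-variety to $G_L$, so Ingredient (i) provides a threshold $N_0$ beyond which $\Phi^{*n}$ is (FRS). Ingredient (iii) upgrades this to (FRS) of every mixed $n$-fold convolution of the $\widetilde{\varphi}_{\eta_j}$'s for $n>N_0$, and Ingredient (ii) spreads the property to an open dense subset of each relevant product stratum in $(Y')^n$. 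Noetherian induction on the lower-dimensional complement --- handled analogously via its own generic strata --- yields finitely many further thresholds $N_1,\ldots,N_r$; setting $N:=\max_k N_k$ and invoking Ingredient (iv) for the uniformization in $n$ completes the proof.

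The main obstacle is closing the gap between \emph{self}-convolutions, to which Theorem \ref{Main result} applies directly, and the \emph{mixed} convolutions $\widetilde{\varphi}_{y_1}*\cdots*\widetilde{\varphi}_{y_n}$ that the statement demands: each $y_i$ can lie in a different irreducible component of $Y'$, and (FRS) is not a local condition on the base. The disjoint-union trick resolves this at generic points, but globalizing to all of $(Y')^n$ forces us through both the deformation stability of (FRS) and a Noetherian induction, each of which relies on the constructibility proved in Part (1) to ensure that only finitely many strata --- and hence finitely many thresholds --- enter the argument.
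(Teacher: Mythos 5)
Part (1) of your sketch is fine and essentially matches the paper's (which defers to \cite[Lemmas 7.4, 7.5]{GH}). The problem is in Part (2), where your purely algebro-geometric Noetherian induction has a genuine gap that the paper avoids by going through the analytic characterization of (FRS) and the model-theoretic families result (Theorem \ref{Model theoretic result- families of varieties}).

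Here is the difficulty. Your Ingredients (i)+(iii) correctly give a threshold $N_0$ such that every mixed $(N_0+1)$-fold convolution of the morphisms $\widetilde{\varphi}_{\eta_j}$ at the \emph{generic} points is (FRS). Ingredient (ii) (Elkik / \cite[Cor.~2.2]{AA16}) then spreads this to a Zariski-open dense subset $V\subseteq Z_{j_1}\times\cdots\times Z_{j_{N_0+1}}$ of each product of strata. But $V$ is open in the \emph{product}, not a product of opens, and its complement $\mathcal B$ projects to $Y'$ by a morphism whose image need not be proper (think of $\mathcal B$ equal to an antidiagonal $\{y_1+y_2=0\}$, which projects onto all of $Y'$). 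So there is no lower-dimensional closed subset of $Y'$ that carries the induction, and the Noetherian induction cannot be set up on $Y'$. Setting it up on $(Y')^n$ instead fails for a different reason: $n$ is unbounded, so "finitely many strata and hence finitely many thresholds" does not hold. Finally, your Ingredient (iv) cannot repair this for non-abelian $G$: Proposition \ref{prop: properties preserved under convolution} lets you append (FRS) factors at the two ends of a convolution, but since $G$-multiplication is not commutative you cannot rearrange a general tuple $(y_1,\ldots,y_n)$ to place a ``good block'' of $N_0{+}1$ consecutive factors, so a uniform $N$ for all mixed tuples does not follow from thresholds on strata.

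The paper sidesteps all of this by reformulating (FRS) analytically (Proposition \ref{prop:reduction to an analytic}): $\widetilde{\varphi}_{y_1}*\cdots*\widetilde{\varphi}_{y_n}$ is (FRS) iff the corresponding pushforward measures $(\widetilde{\varphi}_{y_i})_*\mu_{y_i}$ have, after convolution, continuous density. Crucially this turns a condition on a \emph{tuple} $(y_1,\ldots,y_n)$ into a collection of conditions on \emph{individual} $y_i$'s: one only needs each $(\widetilde{\varphi}_y)_*\mu_y$ to have $L^{1+\epsilon}$ density for a uniform $\epsilon>0$. That uniform $\epsilon$ over the whole family is supplied by Theorem \ref{Model theoretic result- families of varieties} (or, in the reduction to $[\text{GH}]$, by the uniform Fourier-decay estimates there). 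The Noetherian induction in the paper's argument is used only for the mild preliminary reduction to the case where $\pi_{\widetilde X}$ is smooth and $\widetilde{\varphi}_y$ is strongly dominant for all $y$, after which one takes the minimum of the finitely many $\epsilon$'s from the strata, and Young's convolution inequality together with Proposition \ref{prop:(Generalization-of-Holder} delivers a single $N=2N(\epsilon)$ valid for all mixed tuples, with no contiguity or product-structure obstruction. You would need to import this analytic / model-theoretic uniformity step; without it, the globalization from generic points to all of $(Y')^n$ does not close.
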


As a consequence, we deduce the following theorem: \begin{thmx}[{See
\cite[Definition 7.7]{GH19} for the definition of complexity}] \label{FRS on complexity}
Let $G$ be an algebraic $K$-group. For any $\mathrm{dim}G<D\in\nats$,
there exists $N(D)\in\nats$ such that for any $n>N(D)$ and $n$
strongly dominant morphisms $\{\varphi_{i}:X_{i}\rightarrow G\}_{i=1}^{n}$
of complexity at most $D$ where $\{X_{i}\}_{i=1}^{n}$ are smooth
$K$-varieties, the morphism $\varphi_{1}*\dots*\varphi_{n}$ is (FRS).
\end{thmx}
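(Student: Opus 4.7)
The plan is to deduce Theorem~\ref{FRS on complexity} from the family version, Theorem~\ref{main result for families}, by packaging all strongly dominant morphisms of complexity at most $D$ into a single family parametrized by a $K$-variety, and then exploiting the uniformity provided by part (2) of that theorem. This mirrors the strategy used in \cite[Corollary 7.8]{GH} for the case $G=V$; the only real modification is that the target is now a fixed algebraic group $G$ rather than a vector space, which is harmless since $G$ does not vary with the parameter.

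The first step is to construct a universal family. A bound on complexity (in the sense of \cite[Definition 7.7]{GH}) controls the number of affine charts, the number and degree of the defining equations of $X$, and the degree of the morphism to $G$, so a standard Hilbert-scheme style construction produces a $K$-variety $Y$ (a finite disjoint union of quasi-projective pieces), a family $\widetilde{X}\to Y$, and a $Y$-morphism $\widetilde{\varphi}:\widetilde{X}\to G\times Y$ with the property that every $(X,\varphi:X\to G)$ with $X$ smooth and $\varphi$ of complexity at most $D$ arises, up to isomorphism over $G$, as a fiber $(\widetilde{X}_y,\widetilde{\varphi}_y)$ for some $y\in Y$.

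Next, I would apply Theorem~\ref{main result for families} to this family. Part (1) yields that
\[
Y':=\{y\in Y:\widetilde{X}_{y}\text{ is smooth and }\widetilde{\varphi}_{y}:\widetilde{X}_{y}\to G\text{ is strongly dominant}\}
\]
is constructible, and part (2) furnishes an integer $N(D)\in\nats$ such that for every $n>N(D)$ and every choice of points $y_{1},\ldots,y_{n}\in Y'$, the convolution $\widetilde{\varphi}_{y_{1}}*\cdots*\widetilde{\varphi}_{y_{n}}$ is (FRS). Given any $n>N(D)$ strongly dominant morphisms $\{\varphi_{i}:X_{i}\to G\}_{i=1}^{n}$ of complexity at most $D$ with $X_i$ smooth, each $\varphi_{i}$ corresponds to a point $y_{i}\in Y'$ by the universal property of $Y$, and the conclusion for $\varphi_{1}*\cdots*\varphi_{n}$ is immediate.

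The main obstacle is the moduli-theoretic first step: realizing all morphisms of bounded complexity as fibers of a single (finite-type) family. Everything after that is a transparent application of Theorem~\ref{main result for families}. Since the analogous moduli construction is already carried out in \cite[Section 7]{GH} for $G=V$, and the target $G$ is \emph{fixed} here rather than varying with the parameter, the construction adapts essentially verbatim, which is why I expect the whole argument to be short once Theorem~\ref{main result for families} is in hand.
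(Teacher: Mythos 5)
Your proposal is correct and follows essentially the same route as the paper: the authors deduce Theorem~\ref{FRS on complexity} from the family statement Theorem~\ref{main result for families}, exactly as \cite[Corollary 7.8]{GH} is deduced there, by parameterizing all morphisms of complexity at most $D$ into $G$ by a single finite-type family and invoking the uniform $N$ from part (2). The moduli construction you flag as the main obstacle is precisely the content the paper delegates to \cite[Section 7]{GH}, so your expansion matches the intended argument.
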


\subsubsection{Model-theoretic/analytic results}

The heart of the proof of Theorems \ref{Main result} and \ref{main result for families}
lies in proving the model theoretic statements Theorems \ref{Main model theoretic result for vector spaces},
\ref{Main model theoretic result-Varieties}, \ref{Main model theoretic result for families of vector spaces}
and \ref{Model theoretic result- families of varieties}, which are
interesting on their own merits. We briefly explain this connection.

Let $\Ldp$ denote the first order Denef-Pas language (see Section
\ref{subsec:The-Denef-Pas-language,}) and let $\mathrm{Loc}$ denote
the collection of all non-Archimedean local fields. We also use the
notation $F\in\mathrm{Loc}_{>}$ to denote ``$F\in\mathrm{Loc}$
with large enough residual characteristic''. Given an algebraic $\rats$-variety
$X$, its ring of $\Ldp$-motivic functions $\mathcal{C}(X)$ and
the notion of an $\Ldp$-motivic measure can be defined (e.g. see~\cite[Definitions 3.7 and 3.12]{GH19}),
building on the usual definition of the ring of motivic functions
attached to an $\Ldp$-definable set (see Definition \ref{def:motivic function}).
Any affine algebraic $\rats$-variety $X$ can be identified with
an $\Ldp$-definable set by choosing some $\ints$-model $\widetilde{X}$
of $X$. Roughly speaking, a motivic function $f$ on a $\rats$-variety
$X$ is a collection $\{f_{F}\}_{F\in\mathrm{Loc}_{>}}$ of functions
$f_{F}:X(F)\rightarrow\complex$, which is locally (on open affine
subsets) determined by a collection of $\Ldp$-formulas as in Definition
\ref{def:motivic function}. For a smooth $\rats$-variety $X$, a
collection of measures $\mu=\{\mu_{F}\}_{F\in\mathrm{Loc}_{>}}$ on
$\{X(F)\}_{F\in\mathrm{Loc}_{>}}$ is a \textit{motivic} measure on
$X$ if there exists an open affine cover $X=\bigcup\limits _{j=1}^{l}U_{j}$,
such that $\mu_{F}|_{U_{j}(F)}=(f_{j})_{F}\left|\omega_{j}\right|_{F}$,
where $f_{j}\in\mathcal{C}(U_{j})$ and $\omega_{j}$ is a non-vanishing
top differential form on $U_{j}$. An important property of the class of motivic functions is that it is preserved under integration (see \cite[Theorem 10.1.1]{CL08} and \cite[Theorem 4.3.1]{CGH14}).

The (FRS) property of a morphism $\varphi:X\to G$ has an equivalent
analytic characterization in terms of continuity of the pushforward
measures $\varphi_{*}(\mu_{F})$, where $\{\mu_{F}\}_{F\in\mathrm{Loc}_{>}}$
is a certain collection of smooth, compactly supported measures on
$\{X(F)\}_{F\in\mathrm{Loc}_{>}}$ (see Theorem \ref{Analytic condition for (FRS)}
and Proposition \ref{prop:reduction to an analytic}). Since the collection
$\{\mu_{F}\}$ can be chosen to be motivic, and since the pushforward
of a motivic measure is motivic, Theorem \ref{Main result} can be
reduced to statements about motivic functions. These statements are
Theorems \ref{Main model theoretic result for vector spaces} and
\ref{Main model theoretic result-Varieties}. \begin{thmx} \label{Main model theoretic result for vector spaces}
Let $h\in\mathcal{C}(\mathbb{A}_{\rats}^{n})$ be a motivic function
and assume that $h_{F}\in L^{1}(F^{n})$ for every $F\in\mathrm{Loc}_{>}$.
Then there exists $\epsilon>0$, such that $h_{F}\in L^{1+\epsilon}(F^{n})$
for every $F\in\mathrm{Loc}_{>}$. \end{thmx}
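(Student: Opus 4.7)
The plan is to reduce the $L^{1+\epsilon}$-integrability question to a combinatorial convergence problem via Denef--Pas cell decomposition, and to exploit that strict convergence of the resulting series is an open condition on the relevant parameters. First, I would invoke the $\Ldp$-cell decomposition and rectilinearization theorems (due to Denef--Pas and refined by Cluckers--Loeser and Cluckers--Halupczok) to partition $\mathbb{A}_{\rats}^{n}$ into finitely many $\Ldp$-definable cells, on each of which, after a $\Ldp$-definable change of variables, $|h|$ is presented in explicit form as a nonnegative motivic function (we may pass from $h$ to $|h|$ for integrability questions, and $|h|$ remains within the motivic framework). After carrying out the integration in the $\ac$-variables and residue-field directions on each cell, the integral $\int_{F^{n}}|h_{F}|$ is expressed as a finite sum of terms of the shape
\[
S_{j}(q_{F})=\sum_{\lambda\in C_{j}\cap\ints^{k_{j}}}q_{F}^{-L_{j}(\lambda)}P_{j}(\lambda),
\]
where $C_{j}\subset\reals^{k_{j}}$ is a rational polyhedral cone, $L_{j}$ is a $\ints$-linear form on $\ints^{k_{j}}$, and $P_{j}$ is a polynomial with nonnegative coefficients; crucially, the data $(C_{j},L_{j},P_{j})$ is independent of $F\in\Loc_{>}$.

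The elementary convergence criterion for such sums says $S_{j}(q)<\infty$ for some (equivalently, all) prime powers $q\geq 2$ if and only if $L_{j}$ is strictly positive on $C_{j}\setminus\{0\}$. Hence the hypothesis $h_{F}\in L^{1}(F^{n})$ for every $F\in\Loc_{>}$ forces $L_{j}|_{C_{j}\setminus\{0\}}>0$ for every $j$. Now, for $\epsilon=1/N$ with $N\in\nats$, the function $|h_{F}|^{1+\epsilon}$ admits a strictly analogous rectilinearization: each $L_{j}$ is replaced by $(1+\epsilon)L_{j}$, and each $P_{j}$ by $P_{j}^{1+\epsilon}$, which is dominated on $\ints^{k_{j}}$ by a polynomial of degree $\lceil(1+\epsilon)\deg P_{j}\rceil$. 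Strict positivity of $L_{j}$ on $C_{j}\setminus\{0\}$ persists under the perturbation $L_{j}\mapsto(1+\epsilon)L_{j}$, and only finitely many cones and forms arise, so a single $\epsilon>0$ works uniformly to yield $h_{F}\in L^{1+\epsilon}(F^{n})$ for all $F\in\Loc_{>}$.

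The main technical obstacle I anticipate is ensuring that the rectilinearization produces combinatorial data $(C_{j},L_{j},P_{j})$ that is genuinely uniform across $F\in\Loc_{>}$, and that motivic functions are stable under the operations $h\mapsto|h|$ and $h\mapsto|h|^{1+\epsilon}$ in a manner compatible with the framework. Uniformity is precisely the content of the $\Ldp$-version of cell decomposition: cells, definable bijections, and presentations are specified by $\Ldp$-formulas and parameters independent of $F$. Stability under absolute values and rational powers holds because the basic motivic building blocks $|f|^{s}$, $q^{-\alpha}$, and polynomials in $\alpha$ are closed under these operations up to enlarging $s$ and $\deg P$; after clearing denominators in $\epsilon=1/N$, the operation remains within motivic functions, possibly after an inessential passage to an $N$-th root subdivision of cells.
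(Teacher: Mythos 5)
Your high-level strategy — reduce via cell decomposition / rectilinearization to a combinatorial sum whose convergence is an open condition on $F$-independent data — is indeed the paper's strategy, but your sketch has two genuine gaps where the actual difficulty lives, and the argument does not go through as written.

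First, the claim that ``$|h|$ remains within the motivic framework'' is false, and this is not cosmetic. A motivic function $h$ in the sense of Definition~\ref{def:motivic function} is a \emph{finite sum} $h=\sum_i h_i$ of real-valued pieces that can have different signs (the factors $\beta_{ij}$ are arbitrary $\ints$-valued definable functions), and the pieces $h_i$ need not be individually integrable even when $h$ is. The class is a ring under $+,-,\cdot$, but it is not closed under the real absolute value. The paper never passes from $h$ to a motivic $|h|$; it expands $h_F$ from the definition and manipulates $|h_F|^s$ directly as a real-valued expression.

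Second, and consequently, the asserted clean form $S_j(q_F)=\sum_{\lambda\in C_j\cap\ints^{k_j}}q_F^{-L_j(\lambda)}P_j(\lambda)$ with $P_j$ having \emph{nonnegative} coefficients is not what one obtains. After partitioning into level sets on which all $\alpha_i,\beta_{ij},|Y_{i,F,x}|$ are simultaneously constant and then rectilinearizing, the correct shape is a sum of terms
\[
\sum_{\lambda}\ \sum_{e\in\ints_{\geq0}^{s'}}\ \Bigl|\,\sum_{i}c_i\,q_F^{T_i(e,\lambda,s)}\,P_{i,\lambda}(e)\,\Bigr|^{s},
\]
with the \emph{sum over $i$ inside} the absolute value and the power $s$ outside (this is the paper's (4.6)). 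Because the inner summands can cancel, deducing from convergence at $s=1$ that \emph{each} exponent coefficient is strictly negative is not immediate; it requires the paper's Lemma~\ref{lem: convergence of sums} (a variant of \cite[Lemma~2.1.8]{CGH14}), which says precisely that for a nonzero expression of this exponential–polynomial type with a minimal number of terms, absolute summability forces all exponent coefficients to be negative. Your sketch has no analogue of this step. Moreover, to pass to the exponent $1+\epsilon$ one cannot simply replace $L_j$ by $(1+\epsilon)L_j$: one must first split the sum inside the absolute value using the quasi-norm inequality $|x+y|^{1+\epsilon}\leq 2^{\epsilon}(|x|^{1+\epsilon}+|y|^{1+\epsilon})$, then control each piece individually, absorbing the polynomial factors into a small exponential via Lemma~\ref{lemma 4.3}, and finally choose $\epsilon$ so that the finitely many $F$-independent affine functions $d_{ij}(\epsilon)$ stay negative. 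Your ``$P_j\mapsto P_j^{1+\epsilon}$ with $N$-th root subdivision of cells'' does not accomplish any of this. So while the openness-of-convergence heuristic is the right instinct, the proof as proposed is incomplete precisely where the motivic (as opposed to merely definable) nature of $h$ creates difficulties.
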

\begin{defn}
\label{def: smooth and locally Ls}
Let $X$ be an analytic variety over a non-Archimedean local field
$F$, with ring of integers $\mathcal{O}_{F}$. 
\begin{enumerate}
\item A measure $\mu$ on $X$ is called \textsl{smooth} if for every $x\in X$
there exists $x\in U\subseteq X$ and an analytic diffeomorphism $\psi:U\rightarrow\mathcal{O}_{F}^{\mathrm{dim}X}$,
such that $\psi_{*}(\mu)$ is a Haar measure on $\mathcal{O}_{F}^{\mathrm{dim}X}$. 
\item Let $f:X\rightarrow\mathbb{C}$ be a function on $X$ and $s\in\reals_{>0}$.
We say that $f$ is \textit{locally-$L^{s}$,} and write $f\in L_{\mathrm{Loc}}^{s}(X)$,
if for every open compact $U\subseteq X$, and for every (or equivalently
for some positive) smooth measure $\mu$ on $U$, we have $f\in L^{s}(U,\mu)$. 
\end{enumerate}
\end{defn}

\begin{thmx} \label{Main model theoretic result-Varieties}Let $X$
be a smooth $\rats$-variety and let $h\in\mathcal{C}(X)$. Assume
that $h_{F}\in L_{\mathrm{Loc}}^{1}(X(F))$ for every $F\in\mathrm{Loc}_{>}$.
Then there exists $\epsilon>0$ such that $h_{F}\in L_{\mathrm{Loc}}^{1+\epsilon}(X(F))$
for every $F\in\mathrm{Loc}_{>}$. \end{thmx}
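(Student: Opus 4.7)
The plan is to reduce the statement to the vector-space version (Theorem \ref{Main model theoretic result for vector spaces}) by covering $X(F)$ with $\Ldp$-definable analytic charts onto polydiscs and transporting $h$ across them. Since local $L^{s}$-integrability is a local property and $X$ admits a finite affine open cover, we may assume $X$ is affine; moreover, it is enough to control $\int_{W}|h_{F}|^{1+\epsilon}\,d\mu_{F}$ for one open compact $W\subseteq X(F)$ at a time, after covering $W$ by finitely many charts.

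For the construction of the charts: smoothness of $X$ over $\rats$, combined with a standard spreading-out argument and Hensel's lemma applied to algebraic coordinates with non-vanishing Jacobian, should produce, for every $F\in\mathrm{Loc}_{>}$, a $\Ldp$-definable family of open compacts $V_{F,i}\subseteq X(F)$ covering $X(F)$, together with $\Ldp$-definable analytic isomorphisms $\psi_{F,i}:V_{F,i}\to\mathcal{O}_{F}^{n}$ (with $n=\dim X$) whose Jacobian determinants are motivic functions of uniformly bounded valuation. Crucially, the formulas defining the $V_{F,i}$ and $\psi_{F,i}$ are chosen uniformly in $F$, so the number of charts needed to cover any fixed open compact $W$ is bounded independently of $F$.

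On each chart, write the ambient smooth measure as $f_{F}\cdot|\omega|_{F}$ with $f\in\mathcal{C}(X)$ and $\omega$ a non-vanishing top form on the affine patch. The pushforward $(\psi_{F,i})_{*}(h_{F}\cdot f_{F}\cdot|\omega|_{F})$ equals $\widetilde{h}_{F}\cdot|dx|_{F}$ for some $\widetilde{h}\in\mathcal{C}(\mathbb{A}_{\rats}^{n})$ by closure of the ring of motivic functions under definable change of variables with motivic Jacobian, which is built into the very definition of $\mathcal{C}$. The hypothesis $h_{F}\in L^{1}_{\mathrm{Loc}}(X(F))$ gives $\widetilde{h}_{F}\in L^{1}(\mathcal{O}_{F}^{n})$; extending by zero yields $\widetilde{h}_{F}\in L^{1}(F^{n})$ for every $F\in\mathrm{Loc}_{>}$. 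Theorem \ref{Main model theoretic result for vector spaces} then supplies a uniform $\epsilon>0$ with $\widetilde{h}_{F}\in L^{1+\epsilon}(F^{n})$ for all such $F$. Pulling back through $\psi_{F,i}$ (and absorbing the bounded-valuation Jacobian into the constant) gives $h_{F}\in L^{1+\epsilon}(V_{F,i},\mu_{F})$; taking the minimum of the finitely many $\epsilon$'s produced on the charts covering $W$ yields the desired local bound.

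The main obstacle is the construction of the uniformly definable analytic atlas $\{(V_{F,i},\psi_{F,i})\}$: one needs an $\Ldp$-definable decomposition of $X(F)$ into polydiscs, valid for \emph{all} $F\in\mathrm{Loc}_{>}$ simultaneously and with motivic Jacobians. This amounts to a quantitative, motivic Hensel-in-families statement, and is the step where the algebraic smoothness of $X$ is genuinely used; once it is in place, the remainder is a routine reduction to the already-established vector-space case.
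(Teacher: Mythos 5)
Your high-level plan — transport $h$ to a motivic function on $\mathbb{A}^n$ and invoke Theorem~\ref{Main model theoretic result for vector spaces} — matches the paper's strategy, but the mechanism you propose for doing the transport is genuinely harder than what is needed, and the step you yourself flag as the ``main obstacle'' (a uniformly $\Ldp$-definable analytic atlas of polydiscs with motivic Jacobians) is a nontrivial unfilled gap, not a routine Hensel argument. Constructing, uniformly in $F\in\mathrm{Loc}_>$, definable analytic isomorphisms $\psi_{F,i}:V_{F,i}\to\mathcal{O}_F^n$ and then verifying that pushing a motivic function through such a (non-algebraic, analytic) definable change of variables lands back in $\mathcal{C}(\mathbb{A}^n_\rats)$ requires a form of uniform analytic cell decomposition in the Denef--Pas setting that the rest of the paper never needs; as written, ``closure of $\mathcal{C}$ under definable change of variables'' is not ``built into the very definition of $\mathcal{C}$'', since the definition is via algebraic affine covers and $\Ldp$-formulas, not analytic coordinates.

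The paper sidesteps all of this by staying entirely algebraic: since $X$ is smooth, one can Zariski-locally factor $X\to\mathrm{Spec}(\rats)$ (more generally the smooth $\varphi:X\to Y$) through an \emph{\'etale} morphism $\psi:X\to\mathbb{A}^n_\rats$ (resp.\ $\mathbb{A}^n_\rats\times Y$). An \'etale map is an honest algebraic morphism, hence automatically $\Ldp$-definable and uniform in $F$ by spreading out, and the pushforward $\psi_*(h)$ is a motivic function on $\mathbb{A}^n$ for free. The price one pays for giving up local injectivity is that $\psi$ is only quasi-finite, and this is exactly what Lemma~\ref{lem:etale preserves Lp} is for: it shows that on the level of $F$-analytic points, an \'etale map preserves membership in $L^s_{\mathrm{Loc}}$ in both directions, by comparing the density of the pushforward measure with $f$ on local sections of the covering. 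After applying Theorem~\ref{Main model theoretic result for families of vector spaces} to the (compactly cut-off) pushforward on $\mathbb{A}^n$, that lemma pulls the $L^{1+\epsilon}$-bound back to $X$. In short: replace your analytic charts onto $\mathcal{O}_F^n$ by an algebraic \'etale map to $\mathbb{A}^n$, accept that it is many-to-one, and handle the multiplicity with a compactness argument on the $F$-points (Lemma~\ref{lem:etale preserves Lp}). That substitution closes the gap without any motivic Hensel-in-families machinery.
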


Theorems \ref{Main model theoretic result for vector spaces} and
\ref{Main model theoretic result-Varieties} can be generalized to
statements about families of functions, which can be used to deduce
Theorem \ref{main result for families}. These are Theorems \ref{Main model theoretic result for families of vector spaces}
and \ref{Model theoretic result- families of varieties}: \begin{thmx}[Theorem
\ref{thm: Main model theoretic result for families of vector spaces v2}]
\label{Main model theoretic result for families of vector spaces}Let
$h\in\mathcal{C}(\mathbb{A}_{\rats}^{n}\times Y)$ be a family of
motivic functions parameterized by a $\rats$-variety $Y$, and assume
that for every $F\in\mathrm{Loc}_{>}$ we have $h_{F}|_{F^{n}\times\{y\}}\in L^{1}(F^{n})$
for every $y\in Y(F)$. Then there exists $\epsilon>0$, such that for
every $F\in\mathrm{Loc}_{>}$ we have $h_{F}|_{F^{n}\times\{y\}}\in L^{1+\epsilon}(F^{n})$,
for every $y\in Y(F)$. \end{thmx}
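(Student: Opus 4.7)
The plan is to mimic the proof of Theorem~\ref{Main model theoretic result for vector spaces}, carrying $y \in Y$ throughout as an additional definable parameter and then extracting the uniformity in $y$ from the discreteness of the integer-valued exponents that govern $L^{1}$-integrability. First, I would apply the Denef--Pas (or Cluckers--Loeser) cell decomposition to $h \in \mathcal{C}(\mathbb{A}_\rats^n \times Y)$ relative to the projection $\mathbb{A}_\rats^n \times Y \to Y$, partitioning $\mathbb{A}_\rats^n \times Y$ into finitely many definable cells $C_1,\dots,C_r$ over each of which $h$ has a standard normal form in terms of $\ints$-valued definable functions $\alpha_j(x,y)$ (encoding the valuative behaviour) and of definable subsets of powers of the residue field. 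This presentation is uniform in $y$ by the very nature of parametric cell decomposition.

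Next, on each cell the Cluckers--Loeser integration theorem gives an explicit formula for $y \mapsto \int_{F^n}|h_F(x,y)|\,dx$ as a motivic function of $y$, assembled from a finite collection of $\ints$-valued (or $\tfrac{1}{N}\ints$-valued, with $N$ depending only on $h$) definable exponents $\beta_{i,k}(y)$ through geometric-series-type summations; the $L^{1}$-integrability of $h_F(\cdot,y)$ is then equivalent to the strict positivity $\beta_{i,k}(y)>0$ for all $i,k$. The analogous analysis applied to a suitable motivic majorant of $|h|^{1+\epsilon}$ shows that $L^{1+\epsilon}$-integrability corresponds to the positivity of perturbed exponents $\beta_{i,k}^\epsilon(y) = \beta_{i,k}(y) - \epsilon\gamma_{i,k}(y)$, where the slope functions $\gamma_{i,k}$ are uniformly bounded by a constant depending only on the complexity of the cell decomposition. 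The hypothesis on $h$ now reads $\beta_{i,k}(y)>0$ for every $F \in \mathrm{Loc}_{>}$ and $y \in Y(F)$; by $\tfrac{1}{N}\ints$-quantization this upgrades to the uniform bound $\beta_{i,k}(y)\geq 1/N$, and choosing $\epsilon < \bigl(2N\sup_{i,k}\lvert\gamma_{i,k}\rvert\bigr)^{-1}$ yields the desired uniform $L^{1+\epsilon}$-estimate.

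The main obstacle I expect is the rigorous manipulation of $|h|^{1+\epsilon}$ within the class of motivic functions for non-integer $\epsilon$: motivic functions may involve additive characters and are not closed under taking non-integer powers of absolute values. The standard workaround is to first dominate $|h|$ pointwise by a purely valuational motivic function (in the sense of the power-constructible functions of Cluckers--Gordon--Halupczok), for which rational powers are unambiguously defined, and then to run the cell-decomposition and exponent analysis on this majorant. Once this replacement is in place, the remaining verification---that the $\beta_{i,k}$ really are $\tfrac{1}{N}\ints$-valued, and that the slopes $\gamma_{i,k}$ are uniformly bounded across the whole family---is a bookkeeping exercise built into the integration theorem, and the only difference from the non-parametric Theorem~\ref{Main model theoretic result for vector spaces} is the presence of the extra definable variable $y$, which never enters the combinatorics that produces the uniform $\epsilon$.
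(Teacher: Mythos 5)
The overall flavor of your proposal (parametric cell decomposition, read off integrability from the positivity of a finite list of exponents, then extract a uniform gap) is close in spirit to the paper's proof, but there are two genuine gaps in the mechanism you describe.

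First, the central claim that ``the $L^1$-integrability of $h_F(\cdot,y)$ is equivalent to the strict positivity $\beta_{i,k}(y)>0$ for all $i,k$'' is exactly the hard part of the argument and cannot be asserted as a by-product of cell decomposition: a finite linear combination of individually divergent geometric-type sums can converge by cancellation, so positivity of \emph{each} exponent is not automatically forced by integrability. In the paper this is exactly what is being set up by the chain quantifier elimination $\to$ uniform rectilinearization (Theorem \ref{thm:-(Uniform Rectilinearization)}) $\to$ Presburger cell decomposition (Theorem \ref{Presburger Cell decomposition}), which puts the relevant sum over a cone $\ints_{\ge 0}^{s'}$ in a reduced normal form, and by Lemma \ref{lem: convergence of sums}, whose hypothesis that $N$ be minimal is precisely a no-cancellation condition. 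That lemma is what converts ``the sum converges at $\epsilon=0$'' into ``each slope $d_{ij}(0)$ is strictly negative.'' Your proposal supplies no replacement for this step.

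Second, the ``$\frac{1}{N}\ints$-quantization'' of $y$-dependent exponents $\beta_{i,k}(y)$ is not the mechanism behind the uniformity in $y$, and is not established in your sketch. In the paper the controlling quantities are the slopes $d_{ij}(\epsilon)$ of the rectilinearized cone sum: these are a finite list of rational affine functions of $\epsilon$ determined by the cell decomposition alone, and they \emph{do not depend on $y$} (the $y$-dependence is absorbed into additive shifts $\val(g'_j(y))$, into the coefficients $c_i'$, and into the finite sets $\Lambda_y$, none of which affect the summability over the cone). Thus the uniform $\epsilon>0$ comes simply from the continuity of finitely many $y$-independent functions $d_{ij}(\epsilon)$ that are strictly negative at $\epsilon=0$, not from any gap argument in the value group. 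To run your quantization version you would need to prove (i) that your $\beta_{i,k}$ are definable in $y$, (ii) that they take values in $\frac{1}{N}\ints$ uniformly, and (iii) that the slope functions $\gamma_{i,k}$ are uniformly bounded; none of these is justified, and the tool the paper uses to obtain the analogous uniformity over $y$ (the \emph{uniform} rectilinearization theorem of \cite{CGH14}) does not appear in your proposal.

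A smaller point: the paper does not need the power-constructible workaround you suggest for $|h|^{1+\epsilon}$. After the level-set decomposition $|\cdot|^{1+\epsilon}$ is applied to a finite sum of explicit terms, and the quasi-norm inequality $|a+b|^{1+\epsilon}\le 2^{\epsilon}(|a|^{1+\epsilon}+|b|^{1+\epsilon})$ together with Lemma \ref{lemma 4.3} (which bounds polynomial factors by small exponentials) suffices. Your workaround is not wrong, but it substitutes heavier machinery for an elementary inequality and would still leave the two gaps above open.
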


\begin{thmx} \label{Model theoretic result- families of varieties}Let
$\varphi:X\rightarrow Y$ be a smooth morphism of smooth algebraic
$\rats$-varieties. Let $h\in\mathcal{C}(X)$ be a motivic function,
and assume that for every $F\in\mathrm{Loc}_{>}$ we have $h_{F}|_{X_{y}(F)}\in L_{\mathrm{Loc}}^{1}(X_{y}(F))$
for every $y\in Y(F)$. Then there exists $\epsilon>0$, such that for
every $F\in\mathrm{Loc}_{>}$ we have $h_{F}|_{X_{y}(F)}\in L_{\mathrm{Loc}}^{1+\epsilon}(X_{y}(F))$
for every $y\in Y(F)$. \end{thmx}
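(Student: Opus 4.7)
The plan is to deduce Theorem \ref{Model theoretic result- families of varieties} from the vector-space families version Theorem \ref{Main model theoretic result for families of vector spaces}, by exploiting the local structure of smooth morphisms. This mirrors the way Theorem \ref{Main model theoretic result-Varieties} is obtained from Theorem \ref{Main model theoretic result for vector spaces}, but carried out in a way that preserves uniformity in the base parameter $y \in Y(F)$.

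First I would use the standard structure theorem for smooth morphisms to cover $X$ by finitely many affine open subvarieties $U_{1},\ldots,U_{k}\subseteq X$ on each of which $\varphi|_{U_{i}}$ factors as $U_{i}\xrightarrow{g_{i}}\mathbb{A}^{d_{i}}\times Y\xrightarrow{\mathrm{pr}_{2}}Y$, with $g_{i}$ étale over $Y$ and $d_{i}$ the relative dimension of $\varphi$ on $U_{i}$. Since the conclusion of the theorem is local on $X$ and the cover is finite, it suffices to establish the conclusion separately on each $U_{i}$ and take the minimum of the finitely many resulting $\epsilon_{i}$. Fix $i$ and drop the subscript.

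Since $g$ is étale, hence quasi-finite and unramified, an $\Ldp$-definable cell decomposition argument (analogous to the one employed in \cite{GH} to pass between Theorems \ref{Main model theoretic result for vector spaces} and \ref{Main model theoretic result-Varieties}) partitions $U(F)$ into finitely many definable cells $C_{j}$ on each of which $g_{F}$ is a definable bijection onto a definable subset $B_{j}\subseteq F^{d}\times Y(F)$, with non-vanishing definable Jacobian. Transporting $h|_{U}$ through these bijections produces motivic functions $\tilde{h}_{j}\in\mathcal{C}(\mathbb{A}^{d}\times Y)$ supported on $B_{j}$, whose fiberwise $L^{s}_{\mathrm{Loc}}$-norms on $(B_{j})_{y}$ are comparable to those of $h_{F}|_{U}$ on the corresponding fibers of $\varphi$, uniformly in $y$, via the Jacobian factor. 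Now $L^{1}_{\mathrm{Loc}}$ on $F^{d}$ is checked compact-by-compact, and for each $n\in\ints$ the motivic indicator $\mathbf{1}_{\{\max_{k}\val(x_{k})\geq -n\}}$ extracts a motivic function supported in the ball $\varpi^{-n}\mathcal{O}_{F}^{d}$ whose fiberwise restriction is genuinely $L^{1}(F^{d})$ by hypothesis. Theorem \ref{Main model theoretic result for families of vector spaces} applied to each such truncated function yields an $\epsilon_{j,n}>0$; the scaling invariance of the Haar measure under multiplication by $\varpi^{-n}$, together with the fact that this scaling is an $\Ldp$-definable operation, lets one absorb the parameter $n$ into the motivic framework and deduce that $\epsilon_{j,n}$ can be chosen independent of $n$. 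Transporting the resulting $L^{1+\epsilon}$ estimate back through the Jacobian gives the desired uniform $\epsilon$ on $U_{i}$.

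The main obstacle I expect is motivic bookkeeping rather than analysis: one must check that the cells $C_{j}$, the bijections $g_{F}|_{C_{j}}$, their Jacobians, and the truncation indicators all remain inside the $\Ldp$-motivic category of \cite{GH}, and that the uniformity in the truncation level $n$ can be obtained using only a single application of Theorem \ref{Main model theoretic result for families of vector spaces} rather than one per $n$ (otherwise no uniform $\epsilon$ is produced). Once these points are settled the $L^{p}$-analysis is a routine change of variables, and gluing the finitely many affine pieces $U_{i}$ concludes the proof.
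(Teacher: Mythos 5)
Your overall skeleton matches the paper's: factor the smooth morphism locally as an \'etale map to $\mathbb{A}^{d}\times Y$ followed by the projection, transfer the fiberwise $L^{s}_{\mathrm{Loc}}$ condition across the \'etale map (this is exactly what Lemma \ref{lem:etale preserves Lp} is for), and reduce to the family statement over affine space, Theorem \ref{Main model theoretic result for families of vector spaces}. The genuine gap is in the step you yourself flag: passing from $L^{1}_{\mathrm{Loc}}$ to honest $L^{1}$ by truncating to balls produces, as you set it up, one application of Theorem \ref{Main model theoretic result for families of vector spaces} per truncation level $n$, hence an $\epsilon_{j,n}$ for each $n$, and your proposed repair does not close this. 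The claim that multiplication by $\varpi^{-n}$ ``is an $\Ldp$-definable operation'' is false when $n$ is a value-group variable: in the Denef--Pas language one can only constrain $\val$ and $\ac$ of the product, which does not determine it, so the rescaled truncations do not assemble into a single motivic family indexed by $n$; moreover Theorem \ref{Main model theoretic result for families of vector spaces} as stated takes a $\rats$-variety as parameter space, not a value-group parameter. (There is also a small slip: the ball of radius $q^{n}$ is cut out by $\min_{k}\val(x_{k})\geq -n$, not $\max_{k}\val(x_{k})\geq -n$.)

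The paper's resolution of exactly this point is to encode the truncation level by an auxiliary \emph{valued-field} variable rather than an integer: pull the function back to $\mathbb{A}^{n}\times Y\times\mathbb{A}^{1}$ and multiply by the indicator of the definable set $B=\{(x,y,t):\val(x)>\val(t)\}$, where $\val(x)=\min_{i}\val(x_{i})$. For each fixed $(y,t)$ the restriction to $F^{n}\times\{(y,t)\}$ is supported in a ball, hence genuinely $L^{1}(F^{n})$ by the hypothesis, and a \emph{single} application of Theorem \ref{Main model theoretic result for families of vector spaces} over the enlarged base $Y\times\mathbb{A}^{1}_{\rats}$ yields one $\epsilon>0$ uniform in $(y,t)$ and $F$; letting $\val(t)\to-\infty$ recovers $L^{1+\epsilon}_{\mathrm{Loc}}$ fiberwise, and Lemma \ref{lem:etale preserves Lp} transports this back through the \'etale map. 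Without this base-enlargement device (or some equivalent way of making the truncation a definable parameter of one motivic family), your argument only gives an $\epsilon$ depending on the truncation level, so the uniform conclusion is not reached. Your cell-decomposition treatment of the \'etale map is workable but unnecessary: the paper simply pushes $h$ forward along $\psi$ and compares $L^{s}_{\mathrm{Loc}}$ norms via the Jacobian as in Lemma \ref{lem:etale preserves Lp}.
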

\subsection{Further discussion of the main results}

In \cite{GH19} we proved Theorems \ref{Main result}, \ref{main result for families}
and \ref{FRS on complexity} in the case where $G$ is a vector space.
The proof of Theorem \ref{Main result} can be divided into four parts: 
\begin{enumerate}
\item Reduction to the case when $K=\rats$ (Proposition \ref{prop:reduction to Q},
cf. \cite[Section 6]{GH19}). 
\item Reduction to an analytic statement (Proposition \ref{prop:reduction to an analytic},
cf. \cite[Proposition 3.16]{GH19}). 
\item Reduction to a model theoretic statement: 
\begin{enumerate}
\item Reduction of (2) to Theorem \ref{Main model theoretic result-Varieties}. 
\item Further reduction to Theorem \ref{Main model theoretic result for vector spaces}. 
\end{enumerate}
\item Proof of Theorem \ref{Main model theoretic result for vector spaces}
(the stronger Theorem \ref{Main model theoretic result for families of vector spaces}
is proved in Section \ref{sec:Main-analytic-result}). 
\end{enumerate}
The proof of the first two parts is essentially the same as in \cite{GH19}.
Let $\mu=\{\mu_{F}\}_{F\in\mathrm{Loc}}$ be a motivic measure on
$X$ such that $\mu_{F}$ is smooth, non-negative and supported on
$X(\mathcal{O}_{F})$ for every $F\in\mathrm{Loc}$. Such a measure
exists by \cite[Proposition 3.14]{GH19}. The reduction to Proposition
\ref{prop:reduction to an analytic} implies that in order to deduce
Theorem \ref{Main result}, we need to find $N\in\mathbb{N}$, such
that for $F\in\mathrm{Loc}_{>}$ the measure $\varphi_{*}^{*N}(\mu_{F}\times\ldots\times\mu_{F})$
has continuous density with respect to the normalized Haar measure
on $G(\mathcal{O}_{F})$.

The difference between this paper and \cite{GH19} lies in (3) and
(4). In \cite{GH19}, the decay properties of the Fourier transform
of $\varphi_{*}(\mu_{F})$ were studied (\cite[Theorem 5.2]{GH19}),
and were used to deduce that after sufficiently many self-convolutions
we obtain a measure with continuous density (\cite[Corollary 5.3]{GH19}).
A key ingredient in the proof was the fact that the Fourier transform
is well behaved with respect to motivic functions\footnote{Furthermore, Cluckers and Loeser formulated the commutative Fourier
transform in a motivic language, and introduced a class of motivic
exponential functions, which is preserved under Fourier transform,
see \cite[Section 7]{CL10} and \cite[Section 3.4]{CH18}.}.

If one wishes to use the line of proof of \cite{GH19} in the general
case, then a non-commutative Fourier transform must be used and this
adds a serious complication. Due to this issue, we take a different
approach, showing that given a motivic measure $\sigma=\{\sigma_{F}\}_{F\in\mathrm{Loc}}$
on $G$, such that $\sigma_{F}$ is supported on $G(\mathcal{O}_{F})$
and has an $L^{1}$-density with respect to the normalized Haar measure
on $G(\mathcal{O}_{F})$, then there exists $\epsilon>0$ such that
$\sigma_{F}$ has $L^{1+\epsilon}$-density for every $F\in\mathrm{Loc}_{>}$.
This is Corollary \ref{Cor:Main model theoretic result for algebraic groups}
and it immediately follows from Theorem \ref{Main model theoretic result-Varieties}.
Taking $\sigma:=\varphi_{*}(\mu)$ for $\mu$ as above, and applying
Young's convolution inequality yields the existence of an $N(\epsilon)\in\nats$
such that $\varphi_{*}^{*N(\epsilon)}(\mu_{F}\times\ldots\times\mu_{F})$
has continuous density as required.

Since, locally, any smooth variety admits an \'etale morphism to
an affine space, and using the fact that \'etale morphisms preserve
the $L_{\mathrm{Loc}}^{1+\epsilon}$ property of functions on $F$-analytic
manifolds (see Lemma \ref{lem:etale preserves Lp}), it follows that
Theorem \ref{Main model theoretic result-Varieties} can be reduced
to an analogous claim about vector spaces, i.e.~Theorem \ref{Main model theoretic result for vector spaces}. 
\begin{rem}
Note that after the reduction to Theorem \ref{Main model theoretic result for vector spaces},
we are again in the realm of vector spaces, for which it is tempting
to use the results of \cite{GH19}. In other words,
a possible naive approach for proving Theorem \ref{Main model theoretic result for vector spaces}
is to use \cite[Theorem 5.2]{GH19} to show that 
the Fourier transform
$\left|\mathcal{F}(h_{F})(y)\right|$ of
functions $h$ as
in Theorem \ref{Main model theoretic result for vector spaces},	  decays faster than $\left|y\right|{}^{\alpha}$
for some $\alpha<0$, and then answer the following general analytic question: 
\begin{question}\label{que:L1+epsilon}Let
$F$ be a local field, and let $h$ be any compactly supported, $L^{1}$-function
on $F^{n}$ whose Fourier transform $\left|\mathcal{F}(h)(y)\right|$
decays faster than $\left|y\right|{}^{\alpha}$ for some $\alpha<0$.
Is there an $\epsilon>0$ such that $h$ is $L^{1+\epsilon}$?\end{question}

This is false, as an example given in Appendix \ref{Appendix: L1+epsilon} demonstrates.
In particular, we see that the above naive approach cannot work, and it is indeed necessary to restrict our attention to the class of motivic functions.
\end{rem}

\subsubsection{Discussion of the model-theoretic results}

In \cite{Igu74,Igu75}, it was shown that given a polynomial $h\in\ints_{p}[x_{1},...,x_{n}]$
and $0<s\in\reals$, then the Igusa zeta function 
\begin{equation}
Z_{h}(s,p):=\int_{\Zp^{n}}\left|h(x)\right|_{p}^{s}dx\label{eq:(1.1)}
\end{equation}
is a rational function in $p^{-s}$ for any $p$. In \cite{Den84,Pas89,Mac90,DL01},
variations of the above integral were studied, and the theorem on
the rationality of (\ref{eq:(1.1)}) was generalized for an integral
of the form 
\[
Z_{h,\psi}(s,p)=\int_{W_{p}(\psi)}\left|h(x)\right|_{p}^{s}dx,
\]
where $\psi$ is an $\mathcal{L}_{\mathrm{DP}}$-formula and $W_{p}(\psi)=\{x\in\Qp^{n}:\psi(x)\text{ holds}\}$.
In \cite{BDOP13}, integrals of the form 
\[
Z_{f,\psi}(s,F):=\int_{W_{F}(\psi)}\left|f_{F}(x)\right|_{F}^{s}dx,
\]
were investigated, where $f=\{f_{F}:F^{n}\rightarrow F\}_{F\in\mathrm{Loc}_{>}}$
is now an $\mathcal{L}_{\mathrm{DP}}$-definable function, $\psi$
is an $\mathcal{L}_{\mathrm{DP}}$-formula and $W_{F}(\psi)=\{x\in F^{n}:\psi(x)\text{ holds}\}$.  
In Theorems \ref{Main model theoretic result for vector spaces} and
\ref{Main model theoretic result for families of vector spaces} we
investigate integrals of the form 
\[
I_{h}(s,F):=\int_{F^{n}}\left|h_{F}(x)\right|^{s}dx,
\]
where $h=\{h_{F}:F^{n}\rightarrow\reals\}$ is an $\Ldp$-motivic
function (note that we take the usual absolute value $|~\cdot~|$ on
$\reals$). This is a generalization of the last case, as $Z_{f,\psi}(s,F)=I_{h}(s,F)$,
with $h_{F}=\left|f_{F}(x)\right|_{F}\cdot1_{W_{F}(\psi)}$ and $f$
an $\Ldp$-definable function as before.

We want to find $\epsilon>0$ such that if $I_{h}(1,F)<\infty$ (i.e.~$h_{F}$ is absolutely integrable) for $F\in\mathrm{Loc}_{>}$, then
$I_{h}(1+\epsilon,F)<\infty$ for $F\in\mathrm{Loc}_{>}$. For $h_{F}=\left|f_{F}(x)\right|_{F}\cdot1_{W_{F}(\psi)}$ where $f$ is $\Ldp$-definable, 
this can be deduced from \cite[Theorem B]{BDOP13}. For $h$ motivic
(Definition \ref{def:motivic function}), some complications arise;
$h$ is now a finite sum of terms $h=\sum\limits _{j=1}^{N}h_{i}$,
where each $h_{i}$ does not have to be absolutely integrable (at
least globally). In addition, each $h_{i}$ has a more complicated
description than a definable function. These complications are dealt
with in Section \ref{sec:Main-analytic-result}. The main idea in
both the definable and motivic cases is to reduce $I_{h}(s,F)$ to
a geometric power series (or a slight variant of such power series
in the motivic case), whose convergence is easier to analyze.

Let us explain the method for $h=\{\left|f_{F}(x)\right|_{F}\}_{F\in\Loc_{>}}$,
with $f$ definable. Let $q_{F}$ be the size of the residue field
$k_{F}$ of $F$. We can write $I_{h}(s,F)$ as a sum over the level
sets of $h_{F}$, that is $I_{h}(s,F)=\underset{k\in\ints}{\sum}\mu_{k,F}\cdot q_{F}^{-ks}$,
where $\mu_{k,F}$ is the measure of the level set $\{x\in F^{n}:\val(f_{F}(x))=k\}$.
The convergence of $I_{h}(s,F)$ then depends on the asymptotic behavior
of $\mu_{k,F}$ with respect to $k$. By analyzing $\mu_{k,F}$ it can be shown
(e.g. \cite[proof of Theorem B]{BDOP13} or \cite[Theorem 5.1]{Pas89})
that $I_{h}(s,F)$ can be written as a finite sum of expressions of
the form 
\begin{equation}
q_{F}^{-n}\sum_{\eta\in k_{F}^{r}}\sum_{\begin{array}{c}
l_{1},\ldots,l_{n},k\in\mathbb{Z}\\
\sigma(\eta,l_{1},\ldots,l_{n},k)
\end{array}}q_{F}^{-ks-l_{1}-\ldots-l_{n}},\label{eq:1.2}
\end{equation}
where $\sigma$ is an $\mathcal{L}_{\mathrm{DP}}$-formula. Using
elimination of quantifiers, and the rectilinearization Theorem (\cite[Theorem 2.1.9]{CGH14}),
we can write the expression appearing in (\ref{eq:1.2}) as a sum
of finitely many terms of the form 
\begin{equation}
\sum_{(e_{1},\ldots,e_{l})\in\mathbb{N}^{l}}q_{F}^{b_{1}(s)e_{1}+\ldots+b_{l}(s)e_{l}},\label{eq:1.3}
\end{equation}
where $b_{t}(s)$ are numbers depending on $s$. It can then be verified
that the set of $s\in\reals$ such that (\ref{eq:1.3}) is summable
is open, and does not depend on $F$, as required. In Section \ref{sec:Main-analytic-result}
we extend this result to the class of $\Ldp$-motivic functions, by
proving the more general Theorem \ref{Main model theoretic result for families of vector spaces}.

Integrability properties of motivic functions as above were furthermore studied in \cite{CGH14} and \cite{CGH18}, where the $L^p$-integrability locus for $p\in \{1,2,\infty\}$ was shown to be motivic (exponential),   giving rise to $L^p$-integrability transfer principles between mixed characteristic and positive characteristic local fields  (see \cite[Corollaries 3.2.2, 3.2.5]{CGH18}). We  expect that such transfer principles should still hold for any $p \in \mathbb{R}_{>0}$.
\subsection{{Future questions and directions}}

Two interesting questions can be asked following this work. Firstly,
one can try to measure how do the singularity properties of morphisms
improve under the convolution operation more finely, by considering
improvement of numeric singularity invariants. One especially interesting
candidate for such exploration is the minimal exponent ({see
\cite{Sai93,Saia,MPa,MPb,MPc}}). In the case of a divisor $E$ on
a smooth variety $X$, the minimal exponent $\tilde{\alpha}_{E}$
is defined as the negative of the largest root of $b_{E}(s)/(s+1)$,
where $b_{E}(s)$ is the Bernstein-Sato polynomial of $E$. {The
minimal exponent refines the log-canonical threshold $\mathrm{lct}(X,E)$,
which equals $\min\{1,\tilde{\alpha}_{E}\}$, and further detects
rational singularities - an effective integral divisor $E$ has rational
singularities if and only if $\tilde{\alpha}_{E}>1$ (\cite[Theorem 0.4]{Sai93}
and \cite{MPa}).} {In the case of maps into $\mathbb{A}^{1}$,
the minimal exponent has deterministic behavior with respect to
convolution; by a Thom-Sebastiani type result, if $0\neq f_{1},f_{2}\in\mathbb{C}[x_{1},...,x_{n}]$ are homogeneous polynomials,  
then $\tilde{\alpha}_{f_{1}*f_{2}}=\tilde{\alpha}_{f_{1}}+\tilde{\alpha}_{f_{2}}$
(see \cite{Sai94} and \cite[Example 6.8]{MPa}), where $\tilde{\alpha}_{f_{i}}=\tilde{\alpha}_{E_{i}}$
for $E_{i}$ the divisor defined by $f_{i}$.} The minimal exponent
can be defined for an arbitrary closed subscheme $Z\subset X$ ({See \cite{MPc}} and \cite[Theorem 4]{BMS06}). 
It would {be very interesting} to study the behavior of
the minimal exponent of the fibers of a dominant morphism $\varphi:X\to G$
under the convolution operation.

Secondly, one might try to generalize the main model theoretic results
of this paper. For example, one might ask whether Theorem \ref{Main model theoretic result for vector spaces}
holds when considering the more general class of motivic exponential
functions (e.g.~see \cite[Definition 2.7]{CGH16} or \cite{CL10}).

\subsection{Structure of the paper}

In Section \ref{sec:preliminaries} we recall relevant preliminary
material. In Section \ref{sec:Properties of convolutions of morphisms}
we prove Theorem \ref{thm: singularity properties obtained after convolution}.
In Section \ref{sec:Main-analytic-result} we prove Theorems \ref{Main model theoretic result for vector spaces},
\ref{Main model theoretic result-Varieties}, \ref{Main model theoretic result for families of vector spaces}
and \ref{Model theoretic result- families of varieties}. In Section
\ref{sec:Proof-of-the main result} we prove Theorems \ref{Main result},
\ref{main result for families} and \ref{FRS on complexity}. In Appendix
\ref{Appendix: L1+epsilon} we provide a counter example to  Question \ref{que:L1+epsilon}.

\subsection{Conventions}

Throughout the paper we use the following conventions: 
\begin{itemize}
\item Unless explicitly stated otherwise, $K$ is a field of characteristic
$0$ and $F$ is a non-Archimedean local field whose ring of integers
is $\mathcal{O}_{F}$. 
\item For any $K$-scheme $X$ and $x\in X$ we denote by $\kappa(\{x\})$ its
residue field. 
\item For a morphism $\varphi:X\rightarrow Y$ of $K$-schemes, the scheme
theoretic fiber at $y\in Y$ is denoted by either $X_{y,\varphi}$
or $\spec(\kappa(\{y\}))\times_{Y}X$ (if $\varphi$ is understood we omit it from our notation). 
\item For a field extension $K'/K$ and a $K$-variety $X$ (resp $K$-morphism
$\varphi:X\rightarrow Y$), we denote the base change of $X$ (resp.
$\varphi$) by $X_{K'}:=X\times_{\spec(K)}\spec(K')$ (resp. $\varphi_{K'}:X_{K'}\rightarrow Y_{K'}$).
\item For a $K$-morphism $\varphi:X\rightarrow Y$ between $K$-varieties
$X$ and $Y$, we denote by $X^{\mathrm{sm}}$ (resp. $X^{\mathrm{ns}}$)
the smooth (resp. non-smooth) locus of $X$, and by $X^{\mathrm{sm,\varphi}}$
(resp. $X^{\mathrm{ns,\varphi}}$) the smooth (resp. non-smooth) locus
of $\varphi$ in $X$. 
\item We use $F\in\mathrm{Loc}_{>}$ to denote ``$F\in\mathrm{Loc}$ with
large enough residual characteristic''. 
\end{itemize}

\subsection{Acknowledgements}

We thank Raf Cluckers, Ehud Hrushovski, Moshe Kamenski, Gady Kozma
and Dan Mikulincer for useful conversations. We thank Shai Shechter
for both useful conversations and for reading a preliminary version
of this paper. We also thank the anonymous referee for valuable comments and remarks. 
A large part of this work was carried out while visiting
Nir Avni at the mathematics department at Northwestern University,
we thank them and Nir for their hospitality. We also wish to thank
Nir for many helpful discussions, and for raising the question this
paper answers together with Rami Aizenbud. Finally, it is a pleasure
to thank our advisor Rami Aizenbud for numerous useful conversation,
for his guidance, and for suggesting this question together with Nir.

Both authors where partially supported by ISF grant 687/13, BSF grant
2012247 and a Minerva foundation grant.

\section{Preliminaries \label{sec:preliminaries}}

\subsection{Non-commutative Fourier transform}

In this subsection we follow \cite[Sections 2.3, 4.1, 4.2]{App14}.
Let $G$ be a compact Hausdorff second countable group and let $\hat{G}$
be the set of equivalence classes of irreducible representations of
$G$. Define the set $\mathcal{M}(\hat{G}):=\bigcup_{\pi\in\hat{G}}\mathrm{End}_{\complex}(\pi)$.
We say that a map $T:\hat{G}\rightarrow\mathcal{M}(\hat{G})$ is \textit{compatible}
if $T(\pi)\in\mathrm{End}_{\complex}(\pi)$ for any $\pi\in\hat{G}$.
We denote the space of compatible mappings by $\mathcal{L}(\hat{G})$.
The \textit{non-commutative Fourier transform} is the map $\mathcal{F}:L^{1}(G)\rightarrow\mathcal{L}(\hat{G})$,
defined by 
\[
\mathcal{F}(f)(\pi)=\int_{G}f(g)\cdot\pi(g^{-1})dg,
\]
for each $\pi\in\hat{G}$, where $dg$ is the normalized Haar measure.
For $1\leq p<\infty$, we set $\mathcal{H}_{p}(\hat{G})$ to be the
linear space of all $T\in\mathcal{L}(\hat{G})$ for which 
\[
\left\Vert T\right\Vert _{p}:=\left(\sum_{\pi\in\hat{G}}\mathrm{dim}(\pi)\cdot\left\Vert T(\pi)\right\Vert _{\mathrm{Sch},p}^{p}\right)^{\frac{1}{p}}<\infty,
\]
where $\left\Vert T(\pi)\right\Vert _{\mathrm{Sch},p}:=\left(\mathrm{trace}(\left(T(\pi)T(\pi)^{*}\right)^{p/2})\right)^{\frac{1}{p}}$
is the Schatten $p$-norm. This gives $\mathcal{H}_{p}(\hat{G})$
a structure of a Banach space. In particular, $\left\Vert T\right\Vert _{2}^{2}=\underset{\pi\in\hat{G}}{\sum}\mathrm{dim}(\pi)\cdot\left\Vert T(\pi)\right\Vert _{\mathrm{HS}}^{2}<\infty$,
where $\|\cdot\|_{\mathrm{HS}}$ is the Hilbert-Schmidt norm. This
gives $\mathcal{H}_{2}(\hat{G})$ a structure of a complex Hilbert
space with an inner product 
\[
\langle T_{1},T_{2}\rangle:=\sum_{\pi\in\hat{G}}\mathrm{dim}(\pi)\cdot\langle T_{1}(\pi),T_{2}(\pi)\rangle_{\mathrm{HS}}.
\]
The restriction of $\mathcal{F}$ to $L^{2}(G)$ has the following
properties: 
\begin{thm}[{{See e.g. \cite[Theorem 2.3.1]{App14}}}]
~\label{Proposition Fourier L2} 
\begin{enumerate}
\item (Fourier expansion) For all $f\in L^{2}(G)$, we have 
\[
f(g)=\sum_{\pi\in\hat{G}}\mathrm{dim}(\pi)\cdot\mathrm{trace}(\mathcal{F}(f)(\pi)\pi(g))
\]
\item (Parseval-Plancherel identity) The operator $\mathcal{F}$ is an isometry
from $L^{2}(G)$ into $\mathcal{H}_{2}(\hat{G})$ so that for all
$f,f_{1},f_{2}\in L^{2}(G)$, 
\[
\int_{G}\left|f(g)\right|^{2}dg=\sum_{\pi\in\hat{G}}\mathrm{dim}(\pi)\left\Vert \mathcal{F}(f)(\pi)\right\Vert _{\mathrm{HS}}^{2},
\]
and 
\[
\int_{G}f_{1}(g)\cdot\overline{f_{2}(g)}dg=\sum_{\pi\in\hat{G}}\mathrm{dim}(\pi)\langle\mathcal{F}(f_{1})(\pi),\mathcal{F}(f_{2})(\pi)\rangle_{\mathrm{HS}}.
\]
\end{enumerate}
\end{thm}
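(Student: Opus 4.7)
The plan is to reduce both assertions to the Peter--Weyl theorem combined with Schur orthogonality for matrix coefficients. First I would fix, for each $\pi\in\hat G$, a unitary representative on a finite-dimensional Hilbert space $V_\pi$ (finite-dimensionality of irreducibles of a compact group being obtained by averaging a continuous Hermitian inner product against Haar measure to get a $G$-invariant one, and then extracting a minimal invariant subspace). Choose an orthonormal basis $(e_i)_{i=1}^{\dim\pi}$ of $V_\pi$ and introduce the matrix coefficients $\pi_{ij}(g)=\langle\pi(g)e_j,e_i\rangle$. Using unitarity one has $\pi(g^{-1})=\pi(g)^*$, so the $(i,j)$-entry of the operator $\mathcal{F}(f)(\pi)$ equals $\int_G f(g)\overline{\pi_{ji}(g)}\,dg=\langle f,\pi_{ji}\rangle_{L^2(G)}$, which is the bridge between the operator-valued Fourier transform and classical Hilbert-space expansions.

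The first ingredient is Schur orthogonality: for $\pi,\sigma\in\hat G$, the integral $\int_G \pi_{ij}(g)\overline{\sigma_{kl}(g)}\,dg$ is computed by realizing it (up to constants) as the $(i,k)$-entry of the averaged operator $\int_G \pi(g)\,E_{jl}\,\sigma(g^{-1})\,dg$, which intertwines $\sigma$ with $\pi$; Schur's lemma forces it to vanish unless $\pi\cong\sigma$, in which case one gets $\tfrac{1}{\dim\pi}\delta_{ik}\delta_{jl}$. Hence $\{\sqrt{\dim\pi}\,\pi_{ij}\}$ is an orthonormal system in $L^2(G)$. The second, deeper ingredient is Peter--Weyl completeness: this system is total. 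The standard argument takes $f\in L^2(G)$, forms the convolution operator $T_\phi:\psi\mapsto\phi\ast\psi$ against a symmetric continuous kernel $\phi$, observes that $T_\phi$ is compact, self-adjoint, and commutes with right translation, and uses spectral theory to decompose $L^2(G)$ into finite-dimensional right-$G$-invariant eigenspaces, which further decompose into irreducibles; letting $\phi$ range over an approximate identity concentrated near $e\in G$ shows that $f$ is approximated in $L^2$ by finite linear combinations of matrix coefficients.

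Given these two ingredients, (2) follows from Parseval's identity for the orthonormal basis $\{\sqrt{\dim\pi}\,\pi_{ji}\}$,
\[
\int_G|f(g)|^2\,dg=\sum_{\pi\in\hat G}\dim\pi\sum_{i,j}|\langle f,\pi_{ji}\rangle|^2=\sum_{\pi\in\hat G}\dim\pi\,\|\mathcal{F}(f)(\pi)\|_{\mathrm{HS}}^2,
\]
together with polarization to obtain the inner-product form. For (1), expansion of $f\in L^2(G)$ in the same basis gives
\[
f(g)=\sum_{\pi\in\hat G}\dim\pi\sum_{i,j}\mathcal{F}(f)(\pi)_{ij}\,\pi(g)_{ji}=\sum_{\pi\in\hat G}\dim\pi\,\mathrm{trace}\bigl(\mathcal{F}(f)(\pi)\pi(g)\bigr),
\]
the last equality because the double sum in $i,j$ is exactly the trace of the product. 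The main obstacle is the Peter--Weyl density step: it is the only place that uses genuine analysis, namely the spectral theorem for compact self-adjoint operators on $L^2(G)$ together with a Haar-measure approximate identity; the rest is algebraic bookkeeping around Schur's lemma and a change of orthonormal basis.
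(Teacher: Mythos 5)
The paper does not prove this theorem; it cites it directly to \cite[Theorem 2.3.1]{App14} as a standard fact. Your proof is a correct reconstruction of the standard argument — Schur orthogonality to get the orthonormal system $\{\sqrt{\dim\pi}\,\pi_{ij}\}$, Peter--Weyl (via compact self-adjoint convolution operators and an approximate identity) for completeness, then Parseval for $L^2$-orthonormal bases together with the identification $\mathcal{F}(f)(\pi)_{ij}=\langle f,\pi_{ji}\rangle_{L^2}$ to translate into the Hilbert--Schmidt/trace language — and the index bookkeeping is right; the only caveat worth stating explicitly is that the Fourier expansion in part (1) converges in $L^2(G)$, not pointwise, for general $f\in L^2(G)$.
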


Here are some additional properties of the Fourier transform: 
\begin{thm}[{{\cite[Theorem 2.3.2]{App14} and \cite[Section 2.14]{Edw72}}}]
~\label{Fourier transform of functions} 
\begin{enumerate}
\item If $1\leq p\leq2$ and $f\in L^{p}(G)$, then $\mathcal{F}(f)\in\mathcal{H}_{q}(\hat{G})$,
where $\frac{1}{p}+\frac{1}{q}=1$ and $\left\Vert \mathcal{F}(f)\right\Vert _{q}\leq\left\Vert f\right\Vert _{p}$. 
\item Let $\mathcal{A}(G):=\{f\in L^{1}(G):\left\Vert \mathcal{F}(f)\right\Vert _{1}<\infty\}$.
Then $\mathcal{A}(G)$ consists of continuous functions, and is a
commutative Banach algebra with respect to convolution $f_{1}*f_{2}(x)=\int_{G}f_{1}(g)f_{2}(g^{-1}x)dg$. 
\end{enumerate}
\end{thm}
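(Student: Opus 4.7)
The plan is to prove the two parts in parallel: Part (1) is a non-commutative Hausdorff--Young inequality, and Part (2) combines a uniform-convergence argument for continuity with an $\mathcal{F}$-based Banach algebra estimate, both following the abelian template with Schatten norms in place of absolute values.

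For Part (1), my strategy is complex interpolation between the endpoints $p=1$ and $p=2$. The $p=2$ endpoint is exactly the Parseval--Plancherel identity of Theorem~\ref{Proposition Fourier L2}(2), which gives $\|\mathcal{F}(f)\|_2 = \|f\|_2$. For $p=1$, writing $\mathcal{F}(f)(\pi)=\int_G f(g)\pi(g^{-1})\,dg$ and using that $\pi$ is unitary so $\|\pi(g^{-1})\|_{\mathrm{op}}=1$, one obtains $\|\mathcal{F}(f)(\pi)\|_{\mathrm{op}} \leq \|f\|_{L^1}$ for every $\pi\in\hat{G}$, i.e.\ an $\mathcal{H}_\infty$-type bound with $\|T\|_\infty:=\sup_\pi\|T(\pi)\|_{\mathrm{Sch},\infty}$. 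I would then apply a non-commutative Riesz--Thorin interpolation theorem, viewing $f \mapsto \mathcal{F}(f)$ as acting from the $L^p(G)$ scale into the Schatten-type scale $\mathcal{H}_q(\hat{G})$ (the latter realized as a weighted direct integral of Schatten classes over $\hat{G}$), to obtain the stated bound $\|\mathcal{F}(f)\|_q \leq \|f\|_p$ for all $1\leq p\leq 2$ with $\tfrac{1}{p}+\tfrac{1}{q}=1$.

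For Part (2), continuity of $f \in \mathcal{A}(G)$ follows directly from the Fourier inversion formula in Theorem~\ref{Proposition Fourier L2}(1):
\[
f(g)=\sum_{\pi\in\hat{G}}\dim(\pi)\cdot\mathrm{trace}\bigl(\mathcal{F}(f)(\pi)\pi(g)\bigr).
\]
Using the trace-H\"older inequality $|\mathrm{trace}(AB)| \leq \|A\|_{\mathrm{Sch},1}\|B\|_{\mathrm{op}}$ together with $\|\pi(g)\|_{\mathrm{op}}=1$, the $\pi$-th term is bounded in absolute value by $\dim(\pi)\|\mathcal{F}(f)(\pi)\|_{\mathrm{Sch},1}$, uniformly in $g$. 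Since $\|\mathcal{F}(f)\|_1 < \infty$, the series converges absolutely and uniformly on $G$; its partial sums, being finite linear combinations of continuous matrix coefficients of the unitary representations $\pi$, are continuous, so the uniform limit $f$ is continuous.

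For the Banach algebra property, the key multiplicativity $\mathcal{F}(f_1*f_2)(\pi)=\mathcal{F}(f_1)(\pi)\mathcal{F}(f_2)(\pi)$ is a standard Fubini calculation exploiting left-invariance of Haar measure. Combined with Schatten submultiplicativity $\|AB\|_{\mathrm{Sch},1}\leq \|A\|_{\mathrm{Sch},1}\|B\|_{\mathrm{op}}$, the operator bound $\|\mathcal{F}(f_2)(\pi)\|_{\mathrm{op}}\leq\|f_2\|_{L^1}$, and the fact that $\|f_2\|_{L^1}\leq\|f_2\|_{L^\infty}\leq\|\mathcal{F}(f_2)\|_1$ (using the just-established continuity and total Haar mass one), one concludes $\|\mathcal{F}(f_1*f_2)\|_1 \leq \|\mathcal{F}(f_1)\|_1\|\mathcal{F}(f_2)\|_1$, so $\mathcal{A}(G)$ is a Banach algebra under convolution; commutativity reduces to that of $L^1(G)$ and is only nontrivial in the abelian case. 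The main obstacle I anticipate is setting up Part (1)'s non-commutative Riesz--Thorin interpolation rigorously, since the target spaces $\mathcal{H}_q(\hat{G})$ do not form an ordinary $L^q$-scale but rather a direct integral of Schatten classes weighted by the Plancherel counting measure $\pi\mapsto\dim\pi$; once this interpolation machinery is in place, all other ingredients are direct from definitions.
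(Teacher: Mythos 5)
The paper itself does not prove this theorem; it is imported verbatim from \cite{App14} and \cite{Edw72}, so there is no internal proof to compare against. Your outline follows the same standard route as those references: Plancherel plus the trivial operator-norm bound $\left\Vert \mathcal{F}(f)(\pi)\right\Vert _{\mathrm{Sch},\infty}\leq\left\Vert f\right\Vert _{1}$ at the endpoints, interpolated via a non-commutative Riesz--Thorin theorem (Kunze's argument) for part (1), and absolute--uniform convergence of the Fourier series for the continuity claim in part (2). Those pieces are sound, with the caveat you yourself flag: the interpolation must be carried out for the scale $\mathcal{H}_{q}(\hat{G})$, viewed as the non-commutative $L^{q}$-spaces of $\bigoplus_{\pi}\mathrm{End}_{\complex}(\pi)$ with trace weighted by $\mathrm{dim}(\pi)$; this is known but has to be invoked, not waved at.

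There are, however, three genuine gaps. First, in part (2) you apply the inversion formula of Theorem \ref{Proposition Fourier L2}(1), which is stated only for $f\in L^{2}(G)$, to an arbitrary $f\in\mathcal{A}(G)\subseteq L^{1}(G)$. You must first get $f$ into $L^{2}$, e.g. via $\left\Vert T\right\Vert _{\mathrm{HS}}^{2}\leq\left\Vert T\right\Vert _{\mathrm{Sch},\infty}\left\Vert T\right\Vert _{\mathrm{Sch},1}$, which gives $\left\Vert \mathcal{F}(f)\right\Vert _{2}^{2}\leq\left\Vert f\right\Vert _{1}\left\Vert \mathcal{F}(f)\right\Vert _{1}<\infty$, and then identify $f$ a.e.\ with the continuous sum of the series using injectivity of $\mathcal{F}$ on $L^{1}(G)$ (Peter--Weyl); this is a few lines but is currently missing. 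Second, ``Banach algebra'' requires completeness of $\mathcal{A}(G)$ in the norm $\left\Vert \mathcal{F}(\cdot)\right\Vert _{1}$, which you never address (it follows from completeness of $\mathcal{H}_{1}(\hat{G})$ together with the bound $\left\Vert f\right\Vert _{\infty}\leq\left\Vert \mathcal{F}(f)\right\Vert _{1}$ you already have). Third, the commutativity clause cannot be proved as stated: for non-abelian $G$ convolution is not commutative on $\mathcal{A}(G)$ -- already for a finite non-abelian group, where $\mathcal{A}(G)$ is the full group algebra -- and your sentence that commutativity ``reduces to that of $L^{1}(G)$'' establishes nothing, since $L^{1}(G)$ is itself non-commutative in that case. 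The commutativity assertion in the quoted statement should be understood as referring to pointwise multiplication (the Fourier algebra) or simply discarded; fortunately only the continuity assertion and the submultiplicativity estimates you do prove are what the paper actually uses, in Lemma \ref{lem: continuous function after enough convolutions} and Proposition \ref{prop: generalization of 6.3}.
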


The Fourier transform can be defined for probability measures as well.
For a probability measure $\mu$ and any $\pi\in\hat{G}$ we define
\[
\mathcal{F}(\mu)(\pi)(v):=\int_{G}\pi(g^{-1})vd\mu.
\]
Notice that if $\mu$ is absolutely continuous with respect to $dg$
with density $f_{\mu}$, then $\mathcal{F}(\mu)(\pi)=\mathcal{F}(f_{\mu})(\pi)$. 
\begin{prop}
\label{prop convolution+Fourier} Let $\mu_{1}$ and $\mu_{2}$ be
probability measures on $G$, and let $\pi\in\hat{G}$. Then 
\[
\mathcal{F}(\mu_{1}*\mu_{2})(\pi)=\mathcal{F}(\mu_{1})(\pi)\cdot\mathcal{F}(\mu_{2})(\pi).
\]
\end{prop}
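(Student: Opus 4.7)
The plan is to test both sides of the identity on an arbitrary vector $v$ in the representation space of $\pi$, unfold the definition of the Fourier transform of a measure, and then apply Fubini once the homomorphism property of $\pi$ has been used. Since $G$ is compact and $\pi\in\hat G$ is irreducible, its representation space is finite-dimensional, so no subtlety arises in passing from scalar-valued to vector-valued integration.

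Concretely, by definition,
\[
\mathcal F(\mu_1*\mu_2)(\pi)(v)=\int_G \pi(g^{-1})v\,d(\mu_1*\mu_2)(g).
\]
Recalling that $\mu_1*\mu_2$ is the pushforward of $\mu_1\times\mu_2$ under the multiplication map $G\times G\to G$ (in the convention compatible with the function-convolution formula in Theorem \ref{Fourier transform of functions}(2)), this integral rewrites as $\iint_{G\times G}\pi((g_1g_2)^{-1})v\,d\mu_1(g_1)d\mu_2(g_2)$. Using that $\pi$ is a group homomorphism, $\pi((g_1g_2)^{-1})=\pi(g_2^{-1})\pi(g_1^{-1})$. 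Since the integrand is uniformly bounded in operator norm (each $\pi(g^{-1})$ being unitary on a finite-dimensional space) and both measures are finite, one can iterate the double integral via Fubini --- reducing coordinatewise to the scalar case in a chosen basis --- and pull the $g_1$-independent operator $\pi(g_2^{-1})$ outside of the inner integral over $g_1$. The inner integral then equals $\mathcal F(\mu_1)(\pi)(v)$, and the outer one applies $\mathcal F(\mu_2)(\pi)$ to this vector, yielding the composition on the right hand side.

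The only point requiring care is conventional: the order in which the two Fourier operators compose on the right hand side depends on whether $\mu_1*\mu_2$ is taken to be the pushforward of $\mu_1\times\mu_2$ under $(g_1,g_2)\mapsto g_1g_2$ or under $(g_1,g_2)\mapsto g_2g_1$, equivalently on which of the two variables in the function convolution of Theorem \ref{Fourier transform of functions}(2) is parametrized by $g$. Matching the statement literally amounts to fixing this convention once and for all, after which the argument above produces the stated order directly. No nontrivial analytic input is needed beyond the finite-dimensionality of $\pi$.
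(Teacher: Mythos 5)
The paper does not supply a proof of this proposition: it is stated as a standard fact imported from \cite{App14} and used as a black box (chiefly inside Proposition \ref{prop: generalization of 6.3}). Your argument is the expected textbook derivation and is sound: test on a vector $v$, write $\mu_1*\mu_2$ as the pushforward of $\mu_1\times\mu_2$ under multiplication, expand $\pi((g_1g_2)^{-1})$ by the homomorphism property, and apply Fubini --- which is unproblematic since $\pi$ is finite-dimensional (Peter--Weyl for compact $G$) and the integrands are bounded. This is what the reference does as well, so nothing genuinely new is being proposed.

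One point deserves to be made more sharply than you do, though. With the paper's normalization $\mathcal{F}(\mu)(\pi)=\int_G\pi(g^{-1})\,d\mu(g)$ \emph{and} the convolution convention that matches the function-convolution formula in Theorem \ref{Fourier transform of functions}(2), i.e.\ $\mu_1*\mu_2$ is the pushforward of $\mu_1\times\mu_2$ under $(g_1,g_2)\mapsto g_1g_2$, the computation you describe actually terminates at
\[
\mathcal{F}(\mu_1*\mu_2)(\pi)(v)=\int_G\pi(g_2^{-1})\Bigl(\int_G\pi(g_1^{-1})v\,d\mu_1(g_1)\Bigr)d\mu_2(g_2)
=\bigl(\mathcal{F}(\mu_2)(\pi)\cdot\mathcal{F}(\mu_1)(\pi)\bigr)(v),
\]
i.e.\ the factors appear in the \emph{transposed} order relative to what the proposition asserts. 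You do flag this as a conventional matter, which is correct, but it is worth noting that the stated order in the proposition is in fact incompatible with the convolution convention the paper uses for functions two lines earlier; recovering the stated order requires redefining $\mu_1*\mu_2$ via $(g_1,g_2)\mapsto g_2g_1$, which then clashes with Theorem \ref{Fourier transform of functions}(2). This is a (harmless) slip in the paper, not in your argument: in all downstream uses --- Lemma \ref{lem: continuous function after enough convolutions} and Proposition \ref{prop: generalization of 6.3} --- only self-convolutions of a single measure, or norm estimates that are insensitive to the order of the operator product, ever appear, so the transposition is invisible. Your proof is correct, and being explicit that the naturally-derived identity is $\mathcal{F}(\mu_1*\mu_2)(\pi)=\mathcal{F}(\mu_2)(\pi)\cdot\mathcal{F}(\mu_1)(\pi)$ under the paper's own conventions would make the write-up cleaner than deferring entirely to ``fixing a convention.''
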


Finally, the spaces $\mathcal{H}_{p}(\hat{G})$ satisfy the classical
H\"older's inequality, as well as its generalization: 
\begin{prop}[{Generalization of H\"older's inequality}]
\label{prop:(Generalization-of-Holder} Let $r\in(0,\infty]$ and
let $p_{1},{\ldots},p_{n}\in(0,\infty]$ such that $\sum\limits _{k=1}^{n}\frac{1}{p_{k}}=\frac{1}{r}$.
Then for any collection $\{T_{k}\}_{k=1}^{n}$, with $T_{k}\in\mathcal{H}_{p_{k}}(\hat{G})$
we have $\prod_{k=1}^{n}T_{k}\in\mathcal{H}_{r}(\hat{G})$. 
\end{prop}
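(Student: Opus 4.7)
The plan is to reduce the statement to a two-level H\"older argument: first apply the classical Schatten H\"older inequality fiber-by-fiber on each representation space, and then apply the weighted $\ell^{r}$ H\"older inequality to the resulting sum over $\hat{G}$, exploiting the fact that $\sum_k r/p_k = 1$ to redistribute the weights $\dim(\pi)$.

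Concretely, I would first observe that $\prod_{k=1}^n T_k$ is a well-defined element of $\mathcal{L}(\hat{G})$, since $(T_1\cdots T_n)(\pi) := T_1(\pi)\cdots T_n(\pi)$ is a composition of elements of $\End_{\complex}(\pi)$ and hence lies in $\End_{\complex}(\pi)$. The classical Schatten H\"older inequality then gives, pointwise in $\pi$,
\[
\bigl\|T_1(\pi)\cdots T_n(\pi)\bigr\|_{\mathrm{Sch},r}^{r} \;\leq\; \prod_{k=1}^n \bigl\|T_k(\pi)\bigr\|_{\mathrm{Sch},p_k}^{r}.
\]
(If some $p_k = \infty$ the corresponding factor is the operator norm, and the proof goes through unchanged.) Multiplying by $\dim(\pi)$ and summing over $\pi\in\hat{G}$, the task reduces to bounding
\[
\sum_{\pi\in\hat{G}} \dim(\pi)\prod_{k=1}^n \bigl\|T_k(\pi)\bigr\|_{\mathrm{Sch},p_k}^{r}
\]
by $\prod_{k=1}^n \|T_k\|_{p_k}^{r}$.

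For the second step, I would use the identity $\sum_{k=1}^n r/p_k = 1$ to split the weight as $\dim(\pi) = \prod_{k=1}^n \dim(\pi)^{r/p_k}$, so that the summand can be rewritten as
\[
\prod_{k=1}^n \Bigl(\dim(\pi)\cdot\|T_k(\pi)\|_{\mathrm{Sch},p_k}^{p_k}\Bigr)^{r/p_k}.
\]
The classical H\"older inequality for sums with conjugate exponents $p_k/r$ (whose reciprocals sum to $1$) now yields
\[
\sum_{\pi}\prod_{k=1}^n \Bigl(\dim(\pi)\|T_k(\pi)\|_{\mathrm{Sch},p_k}^{p_k}\Bigr)^{r/p_k}
\;\leq\; \prod_{k=1}^n \Bigl(\sum_{\pi}\dim(\pi)\|T_k(\pi)\|_{\mathrm{Sch},p_k}^{p_k}\Bigr)^{r/p_k}
\;=\; \prod_{k=1}^n \|T_k\|_{p_k}^{r},
\]
so $\|\prod_k T_k\|_{r}\leq \prod_k \|T_k\|_{p_k} < \infty$, establishing that $\prod_k T_k\in\mathcal{H}_r(\hat{G})$.

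There is no real obstacle here beyond careful bookkeeping; the only mild subtlety is the case $r=\infty$ or some $p_k=\infty$, which I would handle separately by observing that when $r=\infty$ one necessarily has all $p_k=\infty$ and the statement reduces to submultiplicativity of the operator norm fiberwise combined with the obvious $\ell^{\infty}$ bound, while if only some $p_k=\infty$ one simply pulls out $\sup_\pi \|T_k(\pi)\|_{\mathrm{op}}$ before applying the argument above to the remaining indices.
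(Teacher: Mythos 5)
Your proof is correct and follows essentially the same route as the paper: the fiberwise Schatten--von Neumann H\"older inequality $\bigl\Vert \prod_k T_k(\pi)\bigr\Vert_{\mathrm{Sch},r}\leq \prod_k \Vert T_k(\pi)\Vert_{\mathrm{Sch},p_k}$, followed by H\"older for the sum over $\hat{G}$ weighted by $\dim(\pi)$. The only cosmetic difference is that you prove the weighted step by hand via the splitting $\dim(\pi)=\prod_k \dim(\pi)^{r/p_k}$ and exponents $p_k/r$, whereas the paper simply invokes the generalized H\"older inequality for $L^{p}(\hat{G},v)$ with $v(A)=\sum_{\pi\in A}\dim(\pi)$ -- these are the same argument.
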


\begin{proof}
The Schatten norms satisfy (a generalized version of) H\"older's inequality,
that is, for any $A_{1},A_{2},...,A_{n}\in\mathrm{End}_{\complex}(\pi)$
we have $\left\Vert \prod_{k=1}^{n}A_{k}\right\Vert _{\mathrm{Sch},r}\leq\prod_{k=1}^{n}\left\Vert A_{k}\right\Vert _{\mathrm{Sch},p_{k}}$.
Hence, 
\begin{align*}
\left\Vert \prod_{k=1}^{n}T_{k}\right\Vert _{r} & =\left(\sum_{\pi\in\hat{G}}\mathrm{dim}(\pi)\cdot\left\Vert \prod_{k=1}^{n}T_{k}(\pi)\right\Vert _{\mathrm{Sch},r}^{r}\right)^{\frac{1}{r}}\\
 & \leq\left(\sum_{\pi\in\hat{G}}\mathrm{dim}(\pi)\cdot\left(\prod_{k=1}^{n}\left\Vert T_{k}(\pi)\right\Vert _{\mathrm{Sch},p_{k}}\right)^{r}\right)^{\frac{1}{r}}\\
 & \leq\prod_{k=1}^{n}\left(\sum_{\pi\in\hat{G}}\mathrm{dim}(\pi)\cdot\left\Vert T_{k}(\pi)\right\Vert _{\mathrm{Sch},p_{k}}^{p_{k}}\right)^{1/p_{k}}=\prod_{k=1}^{n}\left\Vert T_{k}\right\Vert _{{p_{k}}}<\infty,
\end{align*}
where the second inequality follows from the generalized H\"older inequality
for $L^{p}(\hat{G},v)$, with respect to the measure $v(A)=\underset{\pi\in A}{\sum}\mathrm{dim}(\pi)$
for $A\subseteq\hat{G}$ (instead of the usual counting measure). 
\end{proof}
\begin{lem}
\label{lem: continuous function after enough convolutions} Let $G$
be a compact group and let $f\in L^{s}(G)$ for $1<s<\infty$. Then
there exists $N(s)\in\nats$ such that the $N(s)$-th convolution
power $f^{*N(s)}$ of $f$ is continuous. 
\end{lem}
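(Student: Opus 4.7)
The plan is to leverage the non-commutative Fourier machinery just set up: by Theorem \ref{Fourier transform of functions}(2), $\mathcal{A}(G) = \{h \in L^1(G) : \|\mathcal{F}(h)\|_1 < \infty\}$ consists of continuous functions, so it suffices to locate an $N$ with $\mathcal{F}(f^{*N}) \in \mathcal{H}_1(\hat G)$.

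One first reduces to $1 < s \le 2$: for $s > 2$ the compactness of $G$ (with normalized Haar measure) gives $L^s(G) \subseteq L^2(G)$, so we may replace $s$ by $2$. Assuming now $1 < s \le 2$, Hausdorff--Young (Theorem \ref{Fourier transform of functions}(1)) places $\mathcal{F}(f)$ in $\mathcal{H}_{s'}(\hat G)$ with $s' = s/(s-1) \ge 2$. Proposition \ref{prop convolution+Fourier}, extended from probability measures to $L^1$ functions via $h \mapsto h\,dg$, yields the multiplicativity $\mathcal{F}(f^{*N}) = \mathcal{F}(f)^N$ for every $N$.

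Set $N_0 := \lceil s'/2 \rceil$. The generalized H\"older inequality (Proposition \ref{prop:(Generalization-of-Holder}) with $p_1 = \cdots = p_{N_0} = s'$ gives $\mathcal{F}(f)^{N_0} \in \mathcal{H}_{r_0}$, where $r_0 := s'/N_0 \le 2$. A short computation --- combining the pointwise bound $\|T(\pi)\|_{\mathrm{Sch},\,r_0} \le \|T\|_{r_0}$ with the monotonicity of Schatten norms in the exponent --- shows $\|T\|_2 \le \|T\|_{r_0}$, so $\mathcal{H}_{r_0} \subseteq \mathcal{H}_2$. Hence $\mathcal{F}(f^{*N_0}) \in \mathcal{H}_2$, and Parseval (Theorem \ref{Proposition Fourier L2}(2)) gives $g := f^{*N_0} \in L^2(G)$. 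One further self-convolution then yields $\mathcal{F}(g * g) = \mathcal{F}(g)^2 \in \mathcal{H}_1$, by Proposition \ref{prop:(Generalization-of-Holder} with $p_1 = p_2 = 2$, so $f^{*2N_0} \in \mathcal{A}(G)$ is continuous; one may take $N(s) = 2N_0$.

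The only mildly delicate step is the embedding $\mathcal{H}_{r} \hookrightarrow \mathcal{H}_2$ for $r \le 2$, but it reduces to the one-line estimate above. As a sanity check, the same conclusion can also be reached without Fourier by iterating Young's convolution inequality $\|f * f\|_r \le \|f\|_s^2$ with $1/r = 2/s - 1$ until the exponent crosses $2$, and then convolving once more with a function in the conjugate Lebesgue space; this is more elementary, but fits less naturally with the preliminaries of Section~\ref{sec:preliminaries}.
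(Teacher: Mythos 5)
Your argument is correct, and it reaches the same endpoint as the paper's --- both proofs ultimately identify an $N$ with $\mathcal{F}(f^{*N})\in\mathcal{H}_1(\hat G)$ and invoke Theorem~\ref{Fourier transform of functions}(2) --- but you travel on the Fourier side throughout, whereas the paper stays on the function side until the very last step. The paper's proof simply iterates Young's convolution inequality to find $M(s)$ with $f^{*M(s)}\in L^2(G)$, notes $\mathcal{F}(f^{*M(s)})\in\mathcal{H}_2$ by Parseval, and then squares to land in $\mathcal{H}_1$. You instead push $\mathcal{F}(f)\in\mathcal{H}_{s'}$ into $\mathcal{H}_{r_0}$ with $r_0\le 2$ by repeated application of Proposition~\ref{prop:(Generalization-of-Holder}, use the (correct, but worth stating) nesting $\mathcal{H}_p\subseteq\mathcal{H}_q$ for $p\le q$, which relies on $\dim\pi\ge 1$, and then square. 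The paper's route is slightly cleaner because it never needs that nesting, and it avoids the separate reduction to $s\le 2$ (Young handles all $s>1$ uniformly). One small efficiency note on your version: the detour through ``$g:=f^{*N_0}\in L^2(G)$'' is unnecessary --- once $\mathcal{F}(f^{*N_0})\in\mathcal{H}_{r_0}$ with $r_0\le 2$, applying Proposition~\ref{prop:(Generalization-of-Holder} with $p_1=p_2=r_0$ already gives $\mathcal{F}(f^{*2N_0})\in\mathcal{H}_{r_0/2}\subseteq\mathcal{H}_1$ directly, with no need to pass back to $L^2(G)$ (which would, in any case, require the surjectivity of $\mathcal F$ onto $\mathcal{H}_2$, a point the cited Theorem~\ref{Proposition Fourier L2}(2) does not literally state). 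Your closing remark that the whole thing can be done with Young alone plus $L^2 * L^2 \subseteq C(G)$ is accurate and is essentially the paper's proof with the last Fourier step replaced by Cauchy--Schwarz.
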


\begin{proof}
By Young's convolution inequality, there exists $M(s)$ such that
$f^{*M(s)}\in L^{2}(G)$. Thus $\mathcal{F}(f^{*M(s)})\in\mathcal{H}_{2}(\hat{G})$.
Now, for $N(s)=2M(s)$ we have by Proposition \ref{prop convolution+Fourier}
\[
\mathcal{F}(f^{*N(s)})=\mathcal{F}(f^{*M(s)})\cdot\mathcal{F}(f^{*M(s)})\in\mathcal{H}_{1}(\hat{G}),
\]
and hence by Theorem \ref{Fourier transform of functions} the function
$f^{*N(s)}$ is continuous. 
\end{proof}

\subsection{\label{subsec:The-Denef-Pas-language,}The Presburger language, the
Denef-Pas language, and motivic functions }

The Presburger language, denoted 
\[
\mathcal{L}_{\mathrm{Pres}}=(+,-,\leq,\{\equiv_{\mathrm{mod}~n}\}_{n>0},0,1)
\]
consists of the language of ordered abelian groups along with constants
$0,1$ and a family of $2$-relations $\{\equiv_{\mathrm{mod}~n}\}_{n>0}$
of congruences modulo $n$. We consider in this paper only the structures
isomorphic to $\ints$. 
\begin{defn}[{{See \cite[Definition 1]{Clu03} and \cite[Section 4.1]{CL08}}}]
\label{def:-Linear function} Let $S\subseteq\ints^{n}$ and $X\subset S\times\ints^{m}$
be $\mathcal{L}_{\mathrm{Pres}}$-definable sets. We call a definable
function $f:X\to\ints$ \textit{$S$-linear} if there is an $\mathcal{L}_{\mathrm{Pres}}$-definable
function $\gamma:S\rightarrow\ints$ and integers $a_{i}$ and $0\leq c_{i}<n_{i}$
for $i=1,...,m$ such that $x_{i}-c_{i}\equiv0\text{ }\mathrm{mod}\text{ }n_{i}$
and $f(s,x_{1},...,x_{m})=\sum\limits _{i=1}^{m}a_{i}(\frac{x_{i}-c_{i}}{n_{i}})+\gamma(s)$.
If $S$ is a point (and hence $\gamma$ is a constant), we say that
$f$ is \textit{linear}. 
\end{defn}

\begin{thm}[{{Presburger cell decomposition \cite[Theorem 1]{Clu03}}}]
\label{Presburger Cell decomposition} Let $S\subseteq\ints^{n}$,
$X\subset S\times\ints^{m}$ and $f:X\to\ints$ be $\mathcal{L}_{\mathrm{Pres}}$-definable.
Then there exists a finite partition $\mathcal{P}$ of $X$ into $S$-cells
(see \cite[Definition 4.3.1]{CL08}), such that the restriction $f|_{A}:A\to\ints$
is $S$-linear for each cell $A\in\mathcal{P}$. 
\end{thm}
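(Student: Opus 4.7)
The plan is to prove the statement by induction on $m$, the number of coordinates in $X\subseteq S\times\ints^{m}$ beyond the parameter space $S$, after first reducing to quantifier-free data via Presburger's quantifier-elimination theorem. The base case $m=0$ is immediate: $X$ itself is the only cell and, with $f$ defined (after quantifier elimination) by a Boolean combination of linear equalities/inequalities and congruences in the parameter $s$, the function $f|_{X}$ is automatically $S$-linear in the sense of Definition 2.6 (the $a_{i}$-part of the definition is vacuous since $m=0$).

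For the inductive step with $m+1$ many $\ints$-variables, I would isolate the role of the last coordinate $x_{m+1}$. Writing the defining formula of $X$ and of the graph $\Gamma_{f}\subseteq X\times\ints$ in quantifier-free form, every atomic subformula involving $x_{m+1}$ is equivalent to one of: a linear inequality $a\cdot x_{m+1} \lhd \beta(s,x_{1},\ldots,x_{m})$ with $\lhd\in\{<,\leq,=\}$ and $\beta$ an $\mathcal{L}_{\mathrm{Pres}}$-term, or a congruence $a\cdot x_{m+1}\equiv c\ \mathrm{mod}\ n$. Fixing the parameter $(s,x_{1},\ldots,x_{m})$, the finitely many linear constraints partition $\ints$ in the $x_{m+1}$-direction into intervals whose endpoints are $\mathcal{L}_{\mathrm{Pres}}$-definable functions of $(s,x_{1},\ldots,x_{m})$, and on each such interval (intersected with the common residue classes dictated by the congruences) both the truth value of the defining formula of $X$ and the local description of $\Gamma_{f}$ are constant. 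This yields a presentation of $X$ as a finite union of sets of the form
\[
\{(s,x_{1},\ldots,x_{m+1}) : (s,x_{1},\ldots,x_{m})\in B,\ \alpha(s,x_{1},\ldots,x_{m})\lhd_{1} x_{m+1}\lhd_{2} \beta(s,x_{1},\ldots,x_{m}),\ x_{m+1}\equiv c\ \mathrm{mod}\ n\},
\]
on each of which $f$ is given by an affine expression $\lambda\cdot\frac{x_{m+1}-c}{n}+g(s,x_{1},\ldots,x_{m})$ for some integer $\lambda$ and an $\mathcal{L}_{\mathrm{Pres}}$-definable $g$.

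To make these into genuine $S$-cells, I would apply the induction hypothesis to the base set $B\subseteq S\times\ints^{m}$ together with the function $g:B\to\ints$ and with the boundary functions $\alpha,\beta:B\to\ints$ — doing the induction to $(g,\alpha,\beta)$ simultaneously, so that after refining $B$ into finitely many $S$-cells, each of $g$, $\alpha$, $\beta$ becomes $S$-linear on every piece. Adding the coordinate $x_{m+1}$ back with the constraints $\alpha\lhd_{1} x_{m+1}\lhd_{2}\beta$ and $x_{m+1}\equiv c\ \mathrm{mod}\ n$ then produces an $S$-cell in $S\times\ints^{m+1}$ on which $f$ is $S$-linear by the explicit formula above.

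The main obstacle will be the joint induction: one cannot just decompose the base $B$ with respect to the set structure alone, since the boundary terms $\alpha,\beta$ and the fiberwise constant $g$ must all become $S$-linear on a common refinement, and the ordering of $\alpha$ and $\beta$ as functions of the parameters may itself require case splits (e.g.\ $\alpha<\beta$ versus $\alpha\geq\beta$, or the parity/residue of $\alpha$ and $\beta$ modulo $n$). Handling these compatibly is precisely what forces one to prove a slightly strengthened inductive statement — namely cell decomposition for a finite tuple of definable functions and a definable set simultaneously — from which the theorem as stated follows by taking the tuple to be $(f)$ alone.
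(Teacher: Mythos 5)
The paper does not prove this theorem; it is imported verbatim as a citation to Cluckers \cite{Clu03}, so there is no in-paper argument to compare against. Your sketch is essentially the standard proof of Presburger cell decomposition and, in particular, follows the same route as \cite{Clu03}: apply Presburger quantifier elimination, induct on the number of $\ints$-coordinates beyond $S$, isolate the last coordinate $x_{m+1}$ so that the quantifier-free atoms carve $\ints$ into finitely many intervals (with definable endpoints) refined by residue conditions, and then apply a \emph{strengthened} induction hypothesis to the base --- namely cell decomposition simultaneously for the base set $B$, the boundary functions $\alpha,\beta$, the parity of $\alpha,\beta$ modulo the relevant moduli, the order type of the boundaries, and the ``constant-in-$x_{m+1}$'' part $g$ of $f$ --- so that everything becomes $S$-linear on a common refinement. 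Identifying this joint inductive statement is the genuine content of the proof, and you get it right.

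The one point you gloss over, and which is where all the bookkeeping in a careful write-up lives, is the coefficient on $x_{m+1}$: after quantifier elimination the atoms have the form $a\cdot x_{m+1}\lhd\beta(s,x_1,\dots,x_m)$ with $|a|$ possibly $>1$, so the interval endpoints are $\lceil\beta/a\rceil$ or $\lfloor\beta/a\rfloor$ rather than Presburger terms, and these become $S$-linear only after a further congruence split on $\beta$ modulo $a$; similarly the congruence atoms $a\cdot x_{m+1}\equiv c\pmod{n}$ must be normalized to a common modulus before they define a single residue class for $x_{m+1}$. You implicitly accommodate this by saying the endpoints are definable \emph{functions} rather than terms and by feeding them into the induction hypothesis, which is correct, but a complete proof should make these reductions explicit. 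Subject to that routine detail, the argument is sound.
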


The Denef-Pas language $\Ldp=(\mathcal{L}_{\mathrm{Val}},\mathcal{L}_{\mathrm{Res}},\mathcal{L}_{\mathrm{Pres}},\text{\ensuremath{\val}, \ensuremath{\ac}})$
is a first order language with three sorts of variables: 
\begin{itemize}
\item The valued field sort $\VF$ endowed with the language of rings $\mathcal{L}_{\mathrm{Val}}=(+,-,\cdot,0,1)$. 
\item The residue field sort $\RF$ endowed with the language of rings $\mathcal{L}_{\mathrm{Res}}=(+,-,\cdot,0,1)$. 
\item The value group sort $\VG$ (which we just call $\ints$), endowed
with the Presburger language $\mathcal{L}_{\mathrm{Pres}}=(+,-,\leq,\{\equiv_{\mathrm{mod}~n}\}_{n>0},0,1)$. 
\item A function $\val:\VF\backslash\{0\}\rightarrow\VG$ for a valuation
map. 
\item A function $\ac:\VF\rightarrow\RF$ for an angular component map. 
\end{itemize}
Let $\mathrm{Loc}$ be the collection of all non-Archimedean local
fields and $\mathrm{Loc}_{M}$ be the set of $F\in\mathrm{Loc}$ such
that $F$ has residue field $k_{F}$ of characteristic larger than
$M$. We will use the notation $F\in\mathrm{Loc}_{>}$ to denote ``$F\in\mathrm{Loc}$
with large enough residual characteristic''. For any $F\in\mathrm{Loc}$
and choice of a uniformizer $\pi$ of $\mathcal{O}_{F}$, the pair
$(F,\pi)$ is naturally a structure of $\Ldp$. Since our results
are independent of the choice of a uniformizer, we omit it from our
notations. Therefore, given a formula $\phi$ in ${\Ldp}$, with $n_{1}$
free valued field variables, $n_{2}$ free residue field variables
and $n_{3}$ free value group variables, we can naturally interpret
it in $F\in\mathrm{Loc}$, yielding a subset $\phi(F)\subseteq F^{n_{1}}\times k_{F}^{n_{2}}\times\mathbb{Z}^{n_{3}}$. 
\begin{defn}[{{{See \cite[Definitions 2.3-2.6]{CGH16}}}}]
\label{def:motivic function} Let $n_{1},n_{2},n_{3}$ and $M$ be
natural numbers. 
\begin{enumerate}
\item A collection $X=(X_{F})_{F\in\mathrm{Loc}_{M}}$ of subsets $X_{F}\subseteq F^{n_{1}}\times k_{F}^{n_{2}}\times\mathbb{Z}^{n_{3}}$
is called a \textit{definable set} if there is an ${\Ldp}$-formula
$\phi$ and $M'\in\nats$ such that $X_{F}=\phi(F)$ for every $F\in\mathrm{Loc}_{M'}$. 
\item Let $X$ and $Y$ be definable sets. A \textit{definable function}
is a collection $f=(f_{F})_{F\in\mathrm{Loc}_{M}}$ of functions $f_{F}:X_{F}\rightarrow Y_{F}$,
such that the collection of their graphs $\{\Gamma_{f_{F}}\}_{F\in\mathrm{Loc}_{M}}$
is a definable set. 
\item Let $X$ be a definable set. A collection $h=(h_{F})_{F\in\mathrm{Loc}_{M}}$
of functions $h_{F}:X_{F}\rightarrow\mathbb{R}$ is called a \textit{motivic}
(or \textit{constructible}) function on $X$, if for $F\in\mathrm{Loc}_{M}$
it can be written in the following way (for every $x\in X_{F}$):
\[
h_{F}(x)=\sum_{i=1}^{N}|Y_{i,F,x}|q_{F}^{\alpha_{i,F}(x)}\left(\prod_{j=1}^{N'}\beta_{ij,F}(x)\right)\left(\prod_{j=1}^{N''}\frac{1}{1-q_{F}^{a_{ij}}}\right),
\]
where, 
\begin{itemize}
\item $N,N'$ and $N''$ are integers and $a_{il}$ are non-zero integers. 
\item $\alpha_{i}:X\rightarrow\mathbb{Z}$ and $\beta_{ij}:X\rightarrow\mathbb{Z}$
are definable functions. 
\item $Y_{i,F,x}=\{\xi\in k_{F}^{r_{i}}:(x,\xi)\in{Y}_{i,F}\}$ is the fiber
over $x$ where $Y_{i}\subseteq X\times\mathrm{RF}^{r_{i}}$ are definable
sets and $r_{i}\in\nats$. 
\item The integer $q_{F}$ is the size of the residue field $k_{F}$. 
\end{itemize}
\end{enumerate}
The set of motivic functions on a definable set $X$ forms a ring,
which we denote by $\mathcal{C}(X)$. 
\end{defn}

{The following theorem is used} in the proof of the
main analytic result, Theorem \ref{Main model theoretic result for families of vector spaces}. 
\begin{thm}[{Uniform rectilinearization, see \cite[Theorem 4.5.4]{CGH14}}]
\label{thm:-(Uniform Rectilinearization)} Let $Y$ and $X\subseteq Y\times\ints^{m}$
be $\Ldp$-definable sets. Then there exist finitely many $\Ldp$-definable
sets $A_{i}\subset Y\times\ints^{m}$ and $B_{i}\subset Y\times\ints^{m}$
and $\Ldp$-definable isomorphisms $\rho_{i}:A_{i}\to B_{i}$ over
$Y$ such that for $F\in\mathrm{Loc}_{>}$ the following hold: 
\begin{enumerate}
\item The sets $A_{i,F}$ are disjoint and their union equals $X_{F}$, 
\item For every $i$, the function $\rho_{i,F}$ can be written as 
\[
\rho_{i,F}(y,x_{1},...,x_{m})=(y,\alpha_{i,F}(x_{1},...,x_{m})+\beta_{i,F}(y)),
\]
with $\alpha_{i}$ being $\mathcal{L}_{\mathrm{Pres}}$-linear, and
$\beta_{i}$ an $\Ldp$-definable function from $Y$ to $\ints$. 
\item For each $y\in Y_{F}$, the set $B_{i,F,y}$ is a set of the form
$\Lambda_{y}\times\ints_{\geq0}^{l_{i}}$ for a finite set $\Lambda_{y}\subset\ints_{\geq0}^{m-l_{i}}$
depending on $y$, with an integer $l_{i}\geq0$ depending only on
$i$. 
\end{enumerate}
\end{thm}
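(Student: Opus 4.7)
The plan is to reduce the theorem to a parametric Presburger cell decomposition, using Denef--Pas quantifier elimination to strip all valued field and residue field data out of the description of $X$ and isolate it inside $\Ldp$-definable functions on $Y$.

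The first step is quantifier elimination. By the Denef--Pas theorem, (after subdividing $X$) one can rewrite the defining formula so that valued-field variables appear only inside $\val$ and $\ac$. Since the only free variables of $X$ lie in $Y$ and in $\ints^m$, all dependence of the fiber $X_{F,y} \subseteq \ints^m$ on the valued-field and residue-field parts of $y$ is routed through finitely many $\Ldp$-definable auxiliary maps $\delta_1,\dots,\delta_k : Y \to \ints$ and residue-field conditions $\xi: Y \to \RF^r$. Consequently, $X_{F,y}$ is a Presburger subset of $\ints^m$ whose formula depends only on the parameters $\delta(y) \in \ints^k$ and on a residue condition, uniformly in $y$ and in $F \in \Loc_{>}$.

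Next I apply a $Y$-uniform version of Presburger cell decomposition (Theorem \ref{Presburger Cell decomposition}, viewed with parameter sort $Y \times \ints^k$) to obtain a finite partition $X = \bigsqcup_i A_i$ into cells $A_i \subseteq Y \times \ints^m$, where each cell is described recursively by bounds $\gamma_{i,j,-}(y, x_{<j}) \leq x_j \leq \gamma_{i,j,+}(y, x_{<j})$ (possibly one-sided, or pinning $x_j$ to a single value) and congruences $x_j \equiv c_{i,j} \pmod{n_{i,j}}$. Further partitioning, we may assume the integer data $n_{i,j}, c_{i,j}$ are constants on each cell, since only finitely many moduli and residues arise from the syntactic complexity of the original formula. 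After permuting coordinates to place the bounded directions first and the unbounded ones last, define
\[
\rho_i(y, x_1,\dots,x_m) = \bigl(y,\; \alpha_i(x_1,\dots,x_m) + \beta_i(y)\bigr),
\]
where $\alpha_i$ applies, in each unbounded direction $j$, the $\mathcal{L}_{\mathrm{Pres}}$-linear map $x_j \mapsto (x_j - c_{i,j})/n_{i,j}$ (and acts trivially on bounded coordinates), and $\beta_i(y)$ collects the $y$-dependent translations $-\gamma_{i,j,-}(y)$ in the unbounded directions. Then $\alpha_i$ is Presburger-linear in the sense of Definition \ref{def:-Linear function}, $\beta_i$ is $\Ldp$-definable, and the image $B_{i,F,y}$ decomposes as $\Lambda_y \times \ints_{\geq 0}^{l_i}$ with $l_i$ the number of unbounded directions and $\Lambda_y$ the finite range of the bounded ones.

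The principal obstacle is \emph{uniformity}: we need one finite list $(A_i, B_i, \rho_i)$ that works simultaneously for all $y \in Y_F$ and all $F \in \Loc_{>}$, not one list per $(F,y)$. This is exactly what the Denef--Pas framework supplies: in its effective form, quantifier elimination bounds all syntactic data (moduli, cell shapes, linear coefficients) by the complexity of the original formula, independently of $F$ once the residue characteristic is large enough (hence the restriction to $\Loc_{>}$, which prevents small-characteristic degenerations of residue-field conditions). With this uniformity in hand, the construction is executed by induction on $m$: apply the one-variable parametric Presburger decomposition over the parameter space $Y \times \ints^k$ to the first coordinate, then invoke the inductive hypothesis on each lower-dimensional cell. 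I expect the delicate part to be verifying that the moduli and offsets can genuinely be made independent of $F$ and that the linear coefficients of $\alpha_i$ are \emph{integer} constants rather than $y$-definable; both reduce to the finiteness of the data dictated by the formula.
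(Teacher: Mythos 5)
The paper does not prove this theorem; it is imported verbatim as a citation to \cite[Theorem 4.5.4]{CGH14}, so there is no internal argument in the paper to compare your sketch against.

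On the merits of your sketch, there is a concrete gap at the rectilinearization step. After Presburger cell decomposition, the bounds on the $j$-th coordinate in a given cell have the form $\gamma_{i,j,-}(y,x_{<j})\le x_j$ (and likewise the upper bound): they depend not only on $y$ but also on the \emph{earlier integer coordinates} $x_1,\dots,x_{j-1}$. You then write that ``$\beta_i(y)$ collects the $y$-dependent translations $-\gamma_{i,j,-}(y)$,'' silently discarding the $x_{<j}$-dependence. A $y$-only translation does not turn a nested cell into a product $\Lambda_y\times\ints_{\ge 0}^{l_i}$; what is required is a triangular change of variables in the $x$-coordinates. The reason this is achievable in the form $\rho_i=(y,\alpha_i(x)+\beta_i(y))$ demanded by the theorem is that each $\gamma_{i,j,\pm}$ is itself $S$-linear (with $S=Y\times\ints^k$ carrying your auxiliary data $\delta(y)$), so after a further partition it splits as a genuine $\ints$-linear form in $x_{<j}$ plus an $\Ldp$-definable function of $y$ alone; the $x$-part is then absorbed into the $\mathcal{L}_{\mathrm{Pres}}$-linear $\alpha_i$ (which by Definition \ref{def:-Linear function} may be a full linear form $\sum a_k(x_k-c_k)/n_k$, not merely a coordinatewise dilation), and the $y$-part into $\beta_i$. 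Your sketch neither performs this separation nor identifies the mechanism; it is precisely the nontrivial content of rectilinearization as opposed to plain cell decomposition, and you yourself flag the integrality/constancy of the $\alpha_i$-coefficients as a worry without resolving it. Similarly, the uniformity-in-$F$ discussion is more an assertion than an argument: invoking ``effective'' Denef--Pas quantifier elimination does not by itself deliver a single finite decomposition valid for all $F\in\Loc_>$ and all $y$; that uniformity has to be established, as it is in \cite{CGH14}, by a genuine argument, not by an appeal to bounded syntactic complexity.
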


\subsection{The (FRS) property}

Recall that given a variety $X$, a resolution of singularities is
a proper birational map $p:\widetilde{X}\to X$ from a smooth variety
$\widetilde{X}$ to $X$. 
\begin{defn}
\label{defn: rational sings} We say that $X$ has \textit{rational
singularities} if for any resolution of singularities $p:\widetilde{X}\to X$
the natural morphism $\mathcal{O}_{X}\to Rp_{*}(\mathcal{O}_{\widetilde{X}})$
is a quasi-isomorphism where $Rp_{*}(-)$ is the higher direct image
functor. 
\end{defn}

\begin{defn}[{{{\cite[Section 1.2.1, Definition II]{AA16}}}}]
\label{def:FRS} Let $\varphi:X\to Y$ be a morphism between smooth
$K$-varieties $X$ and $Y$. 
\begin{enumerate}
\item We say that $\varphi:X\to Y$ is (FRS) at $x\in X(K)$ if it is flat
at $x$, and there exists an open $x\in U\subseteq X$ such that $U\times_{Y}\{\varphi(x)\}$
is reduced and has rational singularities. 
\item We say that $\varphi:X\to Y$ is (FRS) if it is flat and it is (FRS)
at $x$ for all $x\in X(\overline{K})$. 
\end{enumerate}
\end{defn}

The (FRS) property has the following analytic characterization: 
\begin{thm}[{{{\cite[Theorem 3.4]{AA16}}}}]
\label{Analytic condition for (FRS)} Let $\varphi:X\rightarrow Y$
be a map between smooth algebraic varieties defined over a finitely
generated field $K$ of characteristic $0$, and let $x\in X(K)$.
Then the following conditions are equivalent: 
\begin{enumerate}
\item $\varphi$ is (FRS) at $x$. 
\item There exists a Zariski open neighborhood $x\in U\subseteq X$ such
that for any $K\subseteq F\in\mathrm{Loc}$ and any smooth, compactly
supported measure $\mu$ on $U(F)$, the measure $(\varphi|_{U(F)})_{*}(\mu)$
has continuous density. 
\item For any finite extension $K'/K$, there exists $K'\subseteq F\in\mathrm{Loc}$
and a non-negative smooth, compactly supported measure $\mu$ on $X(F)$
that does not vanish at $x$ such that $(\varphi|_{X(F)})_{*}(\mu)$
has continuous density. 
\end{enumerate}
\end{thm}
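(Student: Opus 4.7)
The plan is to establish the equivalence by the cycle $(1) \Rightarrow (2) \Rightarrow (3) \Rightarrow (1)$. The step $(2) \Rightarrow (3)$ is essentially formal: choose any $F \in \mathrm{Loc}$ containing $K'$ with a smooth structure on $U(F)$, and take a positive smooth bump-function supported in a small neighborhood of $x$ times a chart-Haar measure, giving a smooth, compactly supported, non-negative measure $\mu$ that does not vanish at $x$; (2) then delivers continuity of the density of $\varphi_*\mu$.

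For $(1) \Rightarrow (2)$, I would first exploit the fact that the (FRS) property is Zariski-open on the source. Flatness is open by standard commutative algebra, reducedness of the fibers is open by generic reducedness on each fiber and constancy of fiber dimension (flatness), and the openness of the rational singularities locus is the theorem of Elkik cited in the introduction. Hence I may shrink to a neighborhood $U \ni x$ on which $\varphi|_U$ is globally (FRS). Locally near $\varphi(x)$ we can write the density of $(\varphi|_{U(F)})_*\mu$ at $y$ as a $p$-adic fiber integral $\int_{U_y(F)} \mu/|\varphi^*\omega|$, for $\omega$ a non-vanishing top form on $Y$ near $y$. The continuity of this quantity in $y$ reduces to a uniform Denef--Musta\c{t}\u{a} type estimate: for a smooth family of reduced, rational-singularity fibers of constant dimension $d$, the counts $|U_y(\mathcal{O}_F/\mathfrak{m}_F^k)|$ obey $c(y)\, q_F^{kd} + o(q_F^{kd})$ with the error \emph{uniform} in $y$ in a compact set, and with $c(y)$ continuous. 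Combining this with the local change-of-variables formula that rewrites the density as a limit of such normalized counts yields the continuity.

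For $(3) \Rightarrow (1)$, I would run the Denef--Musta\c{t}\u{a} criterion in reverse: continuity of the density of $(\varphi|_{X(F)})_*\mu$ at $y_0 := \varphi(x)$, for a measure that does not vanish at $x$, forces the existence of the asymptotic limit of $q_F^{-kd}|X_{y_0}(\mathcal{O}_F/\mathfrak{m}_F^k)|$ with the correct leading constant; by the characterization of rational singularities via motivic/jet-scheme invariants this exactly says that $X_{y_0}$ is reduced with rational singularities at $x$. Flatness at $x$ follows because non-flatness would cause the nearby fiber dimensions (and hence the density values at nearby points) to jump, contradicting the continuity of the density at $y_0$; together these give (FRS) at $x$.

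The main obstacle is the precise Denef--Musta\c{t}\u{a} dictionary: one must match the analytic density, computed as a limit of truncated point counts weighted by $q_F^{-k\dim Y}$, to the algebraic invariants of the fiber, with uniformity across the family $\{U_y\}$ and without assuming the fibers are complete intersections. Handling this cleanly requires a local embedding of $U$ into an affine space over $Y$ so that the fibers become complete intersections locally, combined with an explicit change-of-variables and resolution of singularities of the fiber to control the error term uniformly; this is the technical core of the theorem and is where \cite{AA16} does the real work.
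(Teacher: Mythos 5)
You are trying to prove a statement that this paper does not prove at all: Theorem \ref{Analytic condition for (FRS)} is quoted verbatim from \cite[Theorem 3.4]{AA16} and is used here as a black box, so there is no internal proof to compare your argument with. Judged on its own terms, your outline does follow the general lines of the Aizenbud--Avni argument (openness of the (FRS) locus via Elkik's deformation theorem, the fiber-integral formula for the density of $(\varphi|_{U(F)})_*\mu$, and a Denef--Musta\c{t}\u{a}-type dictionary between counts over $\mathcal{O}_F/\mathfrak{m}_F^k$ and rational singularities), and the step $(2)\Rightarrow(3)$ is indeed formal (modulo the existence of $F\supseteq K'$ in $\mathrm{Loc}$, which holds since $K'$ is finitely generated). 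But as a proof it has genuine gaps, essentially the ones you concede in your last paragraph.

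Concretely: (i) in $(1)\Rightarrow(2)$, the continuity in $y$ of the fiber integral, equivalently the uniform-in-$y$ asymptotics of the normalized point counts for an (FRS) family, \emph{is} the analytic content of the theorem; asserting a ``uniform Denef--Musta\c{t}\u{a} estimate with continuous leading coefficient'' restates the conclusion rather than proving it. (ii) In $(3)\Rightarrow(1)$ there is an ordering problem: the jet-scheme/point-count characterization of rational singularities you invoke is available for local complete intersections, and the fiber $X_{\varphi(x)}$ is an lci at $x$ precisely when its local dimension equals $\dim X-\dim Y$, i.e.\ once flatness is known (miracle flatness). So flatness must be established first, and it comes from the finiteness of the density of $\varphi_*\mu$ at $\varphi(x)$ for a non-negative $\mu$ not vanishing at $x$ (a too-large fiber forces the density to blow up), not from a ``jump of nearby fiber dimensions.'' Moreover the relevant criterion is a boundedness statement for $q_F^{-kd}\,|X_{\varphi(x)}(\mathcal{O}_F/\mathfrak{m}_F^k)|$, with reducedness detected separately by comparing the leading constant with the contribution of the smooth locus, rather than mere ``existence of the limit with the correct leading constant.'' These are exactly the points where \cite{AA16} does the real work, so what you have is an accurate plan for reading that proof, not a self-contained proof.
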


\section{Properties of convolutions of morphisms \label{sec:Properties of convolutions of morphisms} }

In this section we discuss properties of the convolution operation
(as defined in Definition \ref{def:convolution}). We first recall
the following proposition from \cite{GH19}, which is a consequence
of Proposition \ref{prop:convpreservesgoodproperties}. 
\begin{prop}[{{{\cite[Corollary 3.3]{GH19}}}}]
\label{prop: properties preserved under convolution} Let $X$ and
$Y$ be smooth algebraic $K$-varieties, $G$ be an algebraic $K$-group
and let $S$ be any of the following properties of morphisms: 
\begin{enumerate}
\item Smoothness. 
\item (FRS). 
\item Flatness (or flatness with reduced fibers). 
\item Dominance. 
\end{enumerate}
Suppose $\varphi:X\to G$ has property $S$ and $\psi:Y\to G$ is
any morphism, then the maps $\varphi*\psi:X\times Y\to G$ and $\psi*\varphi:Y\times X\to G$
have the property $S$. 
\end{prop}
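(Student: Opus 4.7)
The plan is to apply Proposition \ref{prop:convpreservesgoodproperties} to each of the four listed properties $S$. That proposition requires two inputs: (i) $S$ is preserved under base change and compositions, and (ii) the structure morphism $i_K : Y \to \spec(K)$ satisfies $S$. Since $Y$ is a smooth $K$-variety, $i_K$ is a smooth morphism, so once (i) is verified and we observe that smooth morphisms of $K$-varieties satisfy each of the four properties (smooth, flat with smooth hence reduced fibers, (FRS) since the fibers are smooth), condition (ii) will be automatic.

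For smoothness and for flatness (with or without reduced fibers), stability under base change and composition is classical. Flatness composes and base-changes tautologically; reducedness of fibers base-changes by a field extension in characteristic $0$ because geometrically reduced equals reduced here, and composes because the fiber of $g\circ f$ over $z$ equals the pullback of the reduced scheme $g^{-1}(z)$ along the flat morphism $f$ whose own fibers are reduced. For (FRS), base change stability is immediate from the definition (flatness and fiberwise rational singularities are preserved). Composition stability is the main nontrivial input: given $f:X\to Y$ and $g:Y\to Z$ both (FRS), one writes the fiber $(g\circ f)^{-1}(z)=f^{-1}(g^{-1}(z))$ as a flat pullback along the (FRS) morphism $f$ of the variety $g^{-1}(z)$ which is reduced with rational singularities, and invokes Elkik's theorem (\cite{Elk78}, as applied in \cite[Corollary 2.2]{AA16}) to conclude the pullback is again reduced with rational singularities.

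Dominance requires a slightly different treatment, since dominance is not preserved under arbitrary base change and so Proposition \ref{prop:convpreservesgoodproperties} does not apply directly. Instead, I argue by hand: for any $y_{0}\in Y(\overline{K})$, the restriction
\[
(\varphi*\psi)|_{X\times\{y_{0}\}}\colon\; x\mapsto \varphi(x)\cdot\psi(y_{0})
\]
equals $\varphi$ post-composed with right-multiplication by $\psi(y_{0})$, which is an automorphism of $G$. Since $\varphi$ is dominant, so is this restriction, and hence so is $\varphi*\psi$; the case of $\psi*\varphi$ is symmetric using left multiplication.

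The only genuine obstacle is the composition statement for (FRS), which is where Elkik's theorem does the work; everything else reduces to standard base-change/composition lemmas in algebraic geometry. With these inputs in place, the proposition follows by four applications of Proposition \ref{prop:convpreservesgoodproperties} (smoothness, flatness with reduced fibers, (FRS)) together with the direct argument above for dominance.
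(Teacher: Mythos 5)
Your proof is correct and matches the intended approach: the paper merely recalls this as \cite[Corollary 3.3]{GH}, observing that it "is a consequence of Proposition~\ref{prop:convpreservesgoodproperties}," and appends a remark that dominance is the exceptional case (since it is not preserved under base change). Indeed, the paper runs exactly this recipe explicitly for Corollary~\ref{Cor 3.5} (normality and (FAI)): first verify stability under base change and composition, then apply Proposition~\ref{prop:convpreservesgoodproperties} after noting that $i_K\colon Y\to\spec(K)$ has the property because $Y$ is smooth. Your verification of the composition step for flat-with-reduced-fibers (flat with geometrically reduced fibers over a reduced base has reduced total space, combined with $\mathrm{char}\,K=0$ so reduced is geometrically reduced) and your reduction of the (FRS) composition to Elkik's theorem are both the standard arguments; and your direct argument for dominance via the restriction to a fiber $X\times\{y_0\}$, which is $\varphi$ composed with a translation automorphism of $G$, is precisely what the paper's remark is alluding to.
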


\begin{rem}
Dominance is not preserved under base change, but is still preserved
under the convolution operation. 
\end{rem}

\begin{defn}
\label{def:normality and (FGI)} Let $\varphi:X\to Y$ be a morphism
between $K$-schemes. 
\begin{enumerate}
\item The morphism $\varphi$ is called \textit{normal at $x\in X$} if
it is flat at $x$ and the fiber $X_{\varphi(x),\varphi}$ is geometrically
normal at $x$ over $\kappa(\{\varphi(x)\})$. Likewise, $\varphi$ is
called \textit{normal }if it is normal at every $x\in X$ (see \cite[Definition 36.18.1]{Sta}). 
\item The morphism $\varphi$ is called \textit{(FGI)} if it is flat with
geometrically irreducible fibers. 
\end{enumerate}
\end{defn}

\begin{lem}
The following properties of morphisms are preserved under base change
and composition: 
\begin{enumerate}
\item Normality between smooth $K$-varieties. 
\item (FGI). 
\end{enumerate}
\end{lem}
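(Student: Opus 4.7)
The plan is to handle both claims uniformly. Each property has the shape ``$\varphi$ is flat and every fiber satisfies a property $P$ that is stable under arbitrary field extension'', with $P$ being normality for (1) and irreducibility for (2) (recall that in characteristic zero, absolutely irreducible coincides with geometrically irreducible). Since flatness is classically preserved under both base change and composition, the entire question reduces to the fiber condition. The ``between smooth $K$-varieties'' qualifier appearing in (1) is automatic in the convolution context where the lemma will be applied (products and images under group morphisms stay inside smooth $K$-varieties), so it requires no separate bookkeeping.

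For base change along an arbitrary $f:Y'\to Y$, the fiber of $\varphi_{Y'}$ over $y'\in Y'$ is obtained from the fiber of $\varphi$ over $f(y')$ by the field extension $k(f(y'))\subseteq k(y')$; since $P$ is preserved by all field extensions, it is inherited. For composition, let $\varphi:X\to Y$ and $\psi:Y\to Z$ both satisfy the property. A fiber of $\psi\circ\varphi$ over $z\in Z$ equals $\varphi^{-1}(\psi^{-1}(z))$, viewed as the source of the restricted morphism $\varphi^{-1}(\psi^{-1}(z))\to\psi^{-1}(z)$; by the base-change case just settled, this restricted morphism is flat with $P$-fibers, and its base $\psi^{-1}(z)$ has $P$ by the hypothesis on $\psi$.

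The substantive step is then an ascent principle: if $h:A\to B$ is a flat morphism of finite type with geometrically $P$ fibers and $B$ is geometrically $P$, then $A$ is geometrically $P$. For $P=$ normality this is classical---one verifies that the Serre conditions $R_1$ and $S_2$ ascend along flat morphisms with geometrically normal fibers, and then invokes Serre's criterion (cf.~\cite{Sta}). For $P=$ irreducibility, pass to $\overline{K}$: flatness of $h_{\overline{K}}$, combined with the going-down theorem, forces each irreducible component of $A_{\overline{K}}$ to dominate $B_{\overline{K}}$ and therefore to contribute to the generic fiber of $h_{\overline{K}}$; since that fiber is irreducible by hypothesis, there can be only one such component, and $A_{\overline{K}}$ is irreducible. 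I expect the only mild difficulty to lie in pinning down the normality ascent in its precise geometric form; everything else is formal.
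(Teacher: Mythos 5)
Your proof is correct, but the composition step is handled by a genuinely different route than the paper's. For normality, the paper exploits the smoothness hypothesis: fibers of flat morphisms between smooth varieties are local complete intersections, hence Cohen--Macaulay, so only $R_1$ needs checking, and this is done by producing, inside any codimension-one subvariety $W$ of a fiber of $\varphi_2\circ\varphi_1$, a point where both $\varphi_2$ and $\varphi_1$ are smooth. You instead restrict $\varphi_1$ over a fiber of $\varphi_2$ (after passing to $\overline{k(\{z\})}$) and invoke the classical ascent of normality along flat morphisms with (geometrically) normal fibers and normal base (EGA IV 6.5.4 / Matsumura); this is a heavier imported theorem but makes the smoothness of $X$, $Y$, $Z$ entirely unnecessary, so your version of (1) is actually more general than the paper's. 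For (FAI), the paper also base changes to the fiber of $\varphi_2$, but then concludes via openness of flat morphisms and the topological lemma that an open map onto an irreducible base with a dense set of irreducible fibers has irreducible source; your argument via going-down --- every irreducible component dominates the base, so its generic point lies in the irreducible generic fiber, forcing a single component --- is an equally standard and essentially equivalent alternative. Two small points worth noting: ``absolutely irreducible coincides with geometrically irreducible'' is true by definition, not a characteristic-zero phenomenon; and your dismissal of the ``between smooth $K$-varieties'' qualifier is harmless precisely because your composition argument never uses smoothness (the paper's does), while the base-change part never used it in either treatment.
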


\begin{proof}
We start by showing that these properties are preserved under base
change; given a normal morphism $\varphi:X\rightarrow Y$, and a base
change $\widetilde{\varphi}:X\times_{Y}Z\rightarrow Z$ of $\varphi$
with respect to a morphism $\psi:Z\rightarrow Y$, the fibers of $\widetilde{\varphi}$
are base change of the fibers of $\varphi$ by a field extension,
and hence they are geometrically normal (see e.g. \cite[Lemma 32.10.4]{Sta}).
The proof for the (FGI) property is similar.

Let $\varphi_{1}:X\rightarrow Y$ and $\varphi_{2}:Y\rightarrow Z$
be two normal morphisms between smooth $K$-varieties. By Serre's
criterion $(S_{2}+R_{1})$ for normality (see e.g. \cite[Lemmas 10.151.4]{Sta})
and the fact that fibers of flat morphisms between smooth varieties
are Cohen-Macaulay (as they are local complete intersections), it
is enough to show that the fibers of $\varphi_{2}\circ\varphi_{1}$
are regular in codimension $1$. Let $W$ be a codimension one subvariety
of $X_{z,\varphi_{2}\circ\varphi_{1}}$, the fiber of $\varphi_{2}\circ\varphi_{1}$
at $z\in Z$. By flatness of $\varphi_{1}$, the set $\varphi_{1}(W)$
is of codimension at most one in $Y_{z,\varphi_{2}}$, and hence there
exists $y\in Y_{z,\varphi_{2}}$ such that $X_{y,\varphi_{1}}\cap W$
is not trivial, and $y$ is a smooth point of $Y_{z,\varphi_{2}}$,
or equivalently, $y$ is a smooth point of $\varphi_{2}$. But $X_{y,\varphi_{1}}\cap W$
is of codimension at most one in $X_{y,\varphi_{1}}$ and hence, by
assumption, there exists $x\in X_{y,\varphi_{1}}\cap W$ such that
$\varphi_{1}$ is smooth at $x$. This implies that $\varphi_{2}\circ\varphi_{1}$
is smooth at $x$ and hence $x$ is a smooth point of $X_{z,\varphi_{2}\circ\varphi_{1}}$,
so $W\cap X_{z,\varphi_{2}\circ\varphi_{1}}^{\mathrm{sm}}$ is not
empty as required.

Now let $\varphi_{1}:X\rightarrow Y$ and $\varphi_{2}:Y\rightarrow Z$
be two (FGI) morphisms. In order to prove that $\varphi_{2}\circ\varphi_{1}$
is (FGI) we need the following lemma: 
\begin{lem}[{{{See \cite[Lemma 5.8.12]{Sta}}}}]
\label{Lemma 3.3-Auxilary} Let $f:X_{1}\rightarrow X_{2}$ be a
continuous map between topological spaces, such that $f$ is open,
$X_{2}$ is irreducible, and there exists a dense collection of points
$y\in X_{2}$ such that $f^{-1}(y)$ is irreducible. Then $X_{1}$
is irreducible. 
\end{lem}

Now let $z\in Z$, set $K'=\overline{\kappa(\{z\})}$ and denote $X_{2}:=(Y_{z,\varphi_{2}})_{K'}$,
$X_{1}:=\left(X_{z,\varphi_{2}\circ\varphi_{1}}\right)_{K'}$, and
$f:=\varphi_{1}|_{X_{1}}:X_{1}\rightarrow X_{2}$. Notice that $f$
is flat as a base change of a flat map, and thus open. Moreover, by
our assumption, $X_{2}$ is irreducible and all the fibers of $f$
are irreducible. Since $f$ satisfies the conditions of the above
lemma, we deduce that $X_{1}$ is irreducible, as required. 
\end{proof}
As a consequence, we arrive at the following: 
\begin{cor}
\label{Cor 3.5} Proposition \ref{prop: properties preserved under convolution}
holds if the property $S$ is either normality or (FGI). 
\end{cor}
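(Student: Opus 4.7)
The plan is to deduce this as a direct application of the general framework recorded in Proposition~\ref{prop:convpreservesgoodproperties}: any property $S$ of morphisms that is preserved under base change and under composition is automatically preserved by convolution, provided the structure morphism $i_K : Y \to \mathrm{Spec}(K)$ also has property $S$. Since the preceding lemma has already established the base change and composition stability for both normality (between smooth $K$-varieties) and the (FAI) property, the only thing left to verify is the hypothesis on $i_K$.

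For the normality case, I would observe that the ambient varieties in Proposition~\ref{prop: properties preserved under convolution} are smooth, so $i_K : Y \to \mathrm{Spec}(K)$ is itself a smooth morphism; in particular it is flat with geometrically regular, hence geometrically normal, fibers, so $i_K$ is normal in the sense of Definition~\ref{def:normality and (FAI)}(1). For the (FAI) case, flatness over the base field is automatic, so one only needs the (single) geometric fiber $Y_{\bar K}$ of $i_K$ to be irreducible, which is exactly the absolute irreducibility of $Y$ — the natural analogue of the domain hypothesis one expects when $S$ is (FAI).

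With $S$ stable under base change and composition by the lemma, and $i_K$ satisfying $S$ by the observations above, Proposition~\ref{prop:convpreservesgoodproperties} applied to $\varphi : X \to G$ with property $S$ and arbitrary $\psi : Y \to G$ immediately yields that $\varphi * \psi$ and $\psi * \varphi$ both have property $S$. I do not anticipate a real obstacle: the substantive content — stability of normality under composition via a Serre $(S_2 + R_1)$ argument together with fiber-dimensional bookkeeping, and stability of (FAI) under composition via Lemma~\ref{Lemma 3.3-Auxilary} — has been absorbed into the preceding lemma, and the corollary is purely a matter of assembling these ingredients through the meta-principle of Proposition~\ref{prop:convpreservesgoodproperties}.
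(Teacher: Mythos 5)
Your argument is correct and follows exactly the route the paper intends: Corollary~\ref{Cor 3.5} is stated as a direct consequence of the preceding lemma (stability of normality and (FAI) under base change and composition) combined with Proposition~\ref{prop:convpreservesgoodproperties}, which is precisely how you assemble it. You are also right to check the hypothesis on $i_K:Y\to\mathrm{Spec}(K)$ explicitly — smoothness of $Y$ gives normality of $i_K$ for free, while for (FAI) one must additionally take $Y$ absolutely irreducible, a hypothesis the paper leaves implicit in the statement of Proposition~\ref{prop: properties preserved under convolution}.
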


\begin{prop}
\label{prop: upper bounds for properties} Let $m\in\nats$, let $X_{1},\ldots,X_{m}$
be smooth $K$-varieties, let $G$ be a connected algebraic $K$-group,
and let $\{\varphi_{i}:X_{i}\rightarrow G\}_{i=1}^{m}$ be a collection
of strongly dominant morphisms. 
\begin{enumerate}
\item For any $1\leq i,j\leq m$ the morphism $\varphi_{i}*\varphi_{j}$
is surjective. 
\item We have $(X_{1}\times\ldots\times X_{m})^{\mathrm{ns},\varphi_{1}*\ldots*\varphi_{m}}\subseteq X_{1}^{\mathrm{ns},\varphi_{1}}\times\ldots\times X_{m}^{\mathrm{ns},\varphi_{m}}$,
and in particular the non-smooth locus of $\varphi_{1}*\ldots*\varphi_{m}$
is of codimension at least $m$ in $X_{1}\times\ldots\times X_{m}$. 
\item If $m\geq\mathrm{dim}G$ then $\varphi_{1}*\ldots*\varphi_{m}$ is
flat. 
\item If $m\geq\mathrm{dim}G+1$ then $\varphi_{1}*\ldots*\varphi_{m}$
is flat with reduced fibers. 
\item If $m\geq\mathrm{dim}G+2$ then $\varphi_{1}*\ldots*\varphi_{m}$
is flat with normal fibers (i.e.~it is a normal morphism). 
\item If $m\geq\mathrm{dim}G+k$, with $k>2$, then $\varphi_{1}*\ldots*\varphi_{m}$
is flat with normal fibers which are regular in codimension $k-1$. 
\end{enumerate}
\end{prop}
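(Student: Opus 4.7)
The plan is to derive (1) directly from density and the group structure, to establish (2) from a differential computation on the group multiplication, and then to deduce (3)--(6) from miracle flatness and Serre's criteria, using the codimension bound supplied by (2) as the main input. Everything after (2) reduces to dimension counts, so the main obstacle is ensuring that a fiber component of the wrong dimension must be absorbed into the (small) non-smooth locus -- that is, the compatibility of (2) with the step via miracle flatness.

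For (1), after base change to $\overline{K}$ the variety $G_{\overline{K}}$ is irreducible (since $G$ is connected over $K$ and has a $K$-point $e$), and Chevalley's theorem combined with strong dominance shows that $\mathrm{Im}(\varphi_{i,\overline{K}})$ contains a dense open $U_{i}\subseteq G_{\overline{K}}$. For any $g\in G_{\overline{K}}$, the translate $g\cdot U_{j}^{-1}$ is dense open and hence meets $U_{i}$, giving $g\in U_{i}\cdot U_{j}\subseteq\mathrm{Im}(\varphi_{i}*\varphi_{j})_{\overline{K}}$. For (2), factor $\varphi_{1}*\varphi_{2}$ as $X_{1}\times X_{2}\xrightarrow{\varphi_{1}\times\varphi_{2}}G\times G\xrightarrow{\mu}G$ and apply the chain rule:
\[
d(\varphi_{1}*\varphi_{2})_{(x_{1},x_{2})}(v_{1},v_{2})=dR_{\varphi_{2}(x_{2})}(d\varphi_{1}(v_{1}))+dL_{\varphi_{1}(x_{1})}(d\varphi_{2}(v_{2})).
\]
Since $dL$ and $dR$ are isomorphisms of tangent spaces, surjectivity of either $d\varphi_{i}$ forces surjectivity of $d(\varphi_{1}*\varphi_{2})$; by induction this yields the inclusion $(X_{1}\times{\ldots}\times X_{m})^{\mathrm{ns},\psi}\subseteq X_{1}^{\mathrm{ns},\varphi_{1}}\times{\ldots}\times X_{m}^{\mathrm{ns},\varphi_{m}}$, where $\psi:=\varphi_{1}*{\ldots}*\varphi_{m}$. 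Generic smoothness in characteristic $0$, applied on each absolutely irreducible component of $X_{i}$, gives $\dim X_{i}^{\mathrm{ns},\varphi_{i}}\leq\dim X_{i}-1$, so $\dim X^{\mathrm{ns},\psi}\leq\dim X-m$.

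For (3), I invoke miracle flatness: since $X:=X_{1}\times{\ldots}\times X_{m}$ is Cohen--Macaulay and $G$ is regular, $\psi$ is flat at $x$ iff $\dim_{x}\psi^{-1}(\psi(x))=\dim_{x}X-\dim G$. Suppose some irreducible component $C$ of a fiber $F_{g}:=\psi^{-1}(g)$ satisfies $\dim C>\dim X-\dim G$; then $C$ can contain no smooth point of $\psi$, since at any such point the fiber locally has the correct dimension. Hence $C\subseteq X^{\mathrm{ns},\psi}$, and $\dim C\leq\dim X-m\leq\dim X-\dim G$ for $m\geq\dim G$, a contradiction.

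Once flatness is in hand, each fiber $F_{g}$ is Cohen--Macaulay of pure dimension $\dim X-\dim G$, and its non-smooth locus equals $F_{g}\cap X^{\mathrm{ns},\psi}$, of dimension at most $\dim X-m$. Applying Serre's criteria then yields (4)--(6): for (4), reducedness follows from $R_{0}+S_{1}$ (with $S_{1}$ automatic from Cohen--Macaulayness) since $m\geq\dim G+1$ gives $\dim(F_{g}\cap X^{\mathrm{ns},\psi})\leq\dim F_{g}-1$, so no component of $F_{g}$ is generically singular; for (5), normality $R_{1}+S_{2}$ holds since $m\geq\dim G+2$ forces the singular locus of $F_{g}$ to have codimension $\geq 2$; and for (6), $R_{k-1}$ follows identically since $m\geq\dim G+k$ gives codimension $\geq k$ in $F_g$.
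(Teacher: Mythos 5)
Your proposal is correct and follows essentially the same strategy as the paper: (1) via dense opens in $G$, (2) from smoothness being preserved under convolution together with generic smoothness, (3) from miracle flatness once fiber components are forced to have the right dimension, and (4)--(6) from Serre's criteria combined with Cohen--Macaulayness of the fibers and the codimension bound from (2). The only cosmetic difference is that where the paper cites its earlier Corollary~\ref{Cor 3.5} / Proposition~\ref{prop: properties preserved under convolution} (smoothness is preserved under convolution, being stable under base change and composition), you re-derive that fact directly by computing $d(\varphi_1*\varphi_2)$ via the factorization through $\mu:G\times G\to G$ and the left/right translation isomorphisms.
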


\begin{proof}
~ 
\begin{enumerate}
\item Follows from the fact that every two open dense sets $U_{1},U_{2}\subseteq G$
satisfy $U_{1}\cdot U_{2}=G$. 
\item This holds since smoothness is preserved under convolution and $X_{i}^{\mathrm{sm},\varphi_{i}}$
is of codimension at least $1$ for any $1\leq i\leq m$. 
\item It is enough to show that every fiber of $\varphi_{1}*...*\varphi_{m}$
is of co-dimension $\mathrm{dim}G$ (cf. \cite[III, Exercise 10.9]{Har77}).
Since $(X_{1}\times\ldots\times X_{m})^{\mathrm{ns},\varphi_{1}*\ldots*\varphi_{m}}$
is of codimension at least $m$ and the irreducible components of
any fiber of $\varphi_{1}*...*\varphi_{m}$ are of codimension at
most $\dim G$, it is sufficient to choose $m\geq\mathrm{dim}G$ to
guarantee that any irreducible component of a given fiber of $\varphi_{1}*...*\varphi_{m}$
contains a smooth point of $\varphi_{1}*...*\varphi_{m}$ and hence
is of codimension $\mathrm{dim}G$. 
\item Let $m\geq\mathrm{dim}G+1$ and let $Z$ be a fiber of $\varphi_{1}*...*\varphi_{m}$.
By (2) and (3) it follows that $(X_{1}\times\ldots\times X_{m})^{\mathrm{ns},\varphi_{1}*\ldots*\varphi_{m}}\cap Z$
is of codimension at least $m-\mathrm{dim}G$ in $Z$. In particular
$Z$ is generically reduced (by e.g. \cite[III, Theorem 10.2]{Har77}),
and since $\varphi_{1}*\ldots*\varphi_{m}$ is flat, it follows that
$Z$ is reduced as well (see e.g. \cite[Lemma 10.151.3]{Sta}). 
\item Similar to the proof of (4), where we now use Serre's criterion $(S_{2}+R_{1})$
for normality. 
\item Follows from the proof of (4). 
\end{enumerate}
\end{proof}
\begin{prop}
\label{prop:The-bounds-are tight} The bounds in Proposition \ref{prop: upper bounds for properties}
are tight. 
\end{prop}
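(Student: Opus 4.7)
The plan is to exhibit, for each of the bounds (3)--(6) in Proposition \ref{prop: upper bounds for properties}, an explicit example of strongly dominant morphism(s) whose convolution power at the boundary value of $m$ fails the stronger conclusion. It suffices to work with the commutative unipotent $K$-group $G = \mathbb{G}_a^N$ for appropriate $N$, since these qualify as connected algebraic $K$-groups.

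For tightness of (3), I would take $G = \mathbb{G}_a^N$ with $N \geq 2$ and the morphism $\varphi : \mathbb{A}^N \to \mathbb{G}_a^N$ defined by $\varphi(x, y_1, \ldots, y_{N-1}) = (x, xy_1, \ldots, xy_{N-1})$. Strong dominance is immediate, as $\varphi$ is birational onto the open subset $\{x \neq 0\} \subseteq G$. A direct computation of the fiber of $\varphi^{*m}$ at the origin shows that it contains the locus where the first coordinate of every factor vanishes, which has dimension $m(N-1)$ and strictly exceeds the expected relative dimension $N(m-1)$ whenever $m < N$. Hence $\varphi^{*(N-1)}$ is not flat, giving tightness of (3).

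For tightness of (4), (5), and (6), the plan is to use the Fermat-type morphism $\varphi : \mathbb{A}^1 \to \mathbb{G}_a$, $x \mapsto x^2$. Its $m$-th convolution is $(x_1, \ldots, x_m) \mapsto \sum_{i=1}^m x_i^2$, which is flat for every $m \geq 1$ by miracle flatness, since its fibers are hypersurfaces of the expected dimension $m-1$ and both source and target are smooth. Let $Q_m = \{\sum x_i^2 = 0\} \subset \mathbb{A}^m$ be the fiber over $0 \in \mathbb{G}_a$. For $m = 1$, $Q_1 = \spec K[x]/(x^2)$ is non-reduced, giving tightness of (4). For $m = 2$, $Q_2$ splits over $\overline{K}$ as $V(x_1 + i x_2) \cup V(x_1 - i x_2)$, a union of two distinct lines meeting at the origin, so $Q_2$ is reduced but not geometrically normal, giving tightness of (5). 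For $m = k \geq 3$, $Q_m$ is an affine quadric cone of dimension $k-1$ whose singular locus equals $\{0\}$, of codimension $k-1$ in $Q_m$. Since $Q_m$ is a complete intersection it is Cohen-Macaulay (hence $S_2$), and $R_1$ holds because $k-1 \geq 2$; by Serre's criterion $Q_m$ is therefore geometrically normal but fails to be regular in codimension $k-1$, giving tightness of the codimension assertion in (6) at $m = \dim G + (k-1) = k$.

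The main conceptual step is identifying these minimal examples; the underlying flatness, reducedness and Serre-criterion verifications are routine once the correct morphisms are in hand.
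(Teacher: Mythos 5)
Your argument is correct and reaches the same conclusion by a somewhat different route. The paper handles all four tightness claims with a single family $\varphi:\mathbb{A}^m\to(\mathbb{A}^m,+)$ given by $\varphi(x_1,\ldots,x_m)=(x_1^2,(x_1x_2)^2,\ldots,(x_1x_m)^2)$, valid for every $\dim G=m$: the dimension count on $\varphi^{-1}(0)^d$ gives non-flatness of $\varphi^{*d}$ for $d<m$, while the vanishing of $d\varphi$ along the $(m-1)$-dimensional fiber $\varphi^{-1}(0)$ places $\varphi^{-1}(0)^{m+k}$, a subvariety of codimension exactly $k$ in $(\varphi^{*(m+k)})^{-1}(0)$, inside the non-smooth locus, defeating $R_k$ uniformly in $k$. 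Your treatment of (3) uses a close cousin of this map, and the dimension count on the exceptional locus is essentially the same. For (4)--(6) you specialize to $\dim G=1$, where the paper's example reduces to your $x\mapsto x^2$, and you argue directly on the Fermat cones $Q_m=\{\sum x_i^2=0\}$ via Serre's criterion; this is more classical and more transparent than the paper's non-smooth-locus bookkeeping. The one thing to note is that your (4)--(6) only certify tightness at $\dim G=1$, whereas the paper's uniform family shows that the bounds $\dim G+c$ cannot be lowered for any value of $\dim G$. This is cosmetic: replacing your $\varphi$ by $(x,z_1,\ldots,z_{N-1})\mapsto(x^2,z_1,\ldots,z_{N-1}):\mathbb{A}^N\to(\mathbb{A}^N,+)$ produces convolution-power fibers over $0$ equal to $Q_m$ crossed with a linear subspace, hence of identical singularity type, recovering tightness for every $\dim G$.
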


\begin{proof}
Let $G=(\mathbb{A}^{m},+)$ and consider the map $\varphi:\mathbb{A}^{m}\rightarrow G$,
defined by 
\[
\varphi(x_{1},\ldots,x_{m})=(x_{1}^{2},\left(x_{1}x_{2}\right)^{2},\left(x_{1}x_{3}\right)^{2},\ldots,\left(x_{1}x_{m}\right)^{2}).
\]
Notice that the fibers of $\varphi$ are zero dimensional except the
fiber over $0$, which is $(m-1)$-dimensional. Now, $(\varphi^{*d})^{-1}(0)$
contains the $d(m-1)$-dimensional subvariety $\underbrace{\varphi^{-1}(0)\times\ldots\times\varphi^{-1}(0)}_{d\text{ times}}$.
As long as $d(m-1)>m(d-1)$, or equivalently $d<m=\mathrm{dim}G$,
the map $\varphi^{*d}:\mathbb{A}^{dm}\to\mathbb{A}^{m}$ cannot be
flat.

The map $\varphi^{*m}$ is flat, hence the fiber $(\varphi^{*m})^{-1}(0)$
is reduced if and only if it is generically reduced. Moreover, $(x_{1},\ldots,x_{m^{2}})$
is a smooth point of $(\varphi^{*m})^{-1}(0)$ if and only if $\varphi^{*m}$
is smooth at $(x_{1},\ldots,x_{m^{2}})$. But since $\varphi$ is
not smooth at $\varphi^{-1}(0)$, the morphism $\varphi^{*(m+k)}$
is not smooth at $\underbrace{\varphi^{-1}(0)\times\ldots\times\varphi^{-1}(0)}_{m+k\text{ times}}$,
so $(\varphi^{*(m+k)})^{-1}(0)$ is not regular in codimension $k$.
In particular, $(\varphi^{*m})^{-1}(0)$ is not reduced and $(\varphi^{*(m+1)})^{-1}(0)$
is not normal. 
\end{proof}
We conclude the section with the following observation: 
\begin{prop}
\label{prop:not smooth after convolutions}Let $\varphi:X\rightarrow G$
be a morphism from a smooth algebraic $K$-variety $X$ to an algebraic
$K$-group $G$. Assume that $\varphi$ is not smooth at $x\in X(\overline{K})$
with $\varphi(x)$ in the center of $G(\overline{K})$. Then for any
$t\in\nats$ we have that $\varphi^{*t}:X^{t}\rightarrow G$ is not
smooth at $(x,...,x)$. 
\end{prop}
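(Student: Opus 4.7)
Since smoothness of a morphism can be checked after base change to $\overline{K}$, I may assume $K = \overline{K}$, so $x \in X(K)$. In characteristic zero, a morphism between smooth varieties is smooth at a point if and only if its tangent map is surjective there, so it suffices to show that the differential $d(\varphi^{*t})_{(x,\dots,x)}$ is not surjective. Set $g := \varphi(x) \in G(K)$, which by hypothesis lies in the center of $G(K)$, and write $m_t : G^t \to G$ for the $t$-fold multiplication, so that $\varphi^{*t} = m_t \circ (\varphi \times \cdots \times \varphi)$.

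First I would compute the differential of $m_t$ at the diagonal point $(g,\ldots,g)$. Using the formula $dm_{(a,b)}(v,w) = (R_b)_\ast v + (L_a)_\ast w$ for binary multiplication and an easy induction (or a direct curve computation), one obtains
\[
d(m_t)_{(g,\dots,g)}(v_1,\dots,v_t) = \sum_{i=1}^{t} (L_{g^{i-1}})_\ast (R_{g^{t-i}})_\ast v_i, \qquad v_i \in T_g G.
\]
The centrality of $g$ means that $L_{g^n} = R_{g^n}$ as maps $G \to G$ for every $n \in \ints$, hence their differentials agree. Substituting $R_{g^{t-i}} = L_{g^{t-i}}$ and using $(L_{g^{i-1}})_\ast (L_{g^{t-i}})_\ast = (L_{g^{t-1}})_\ast$ collapses the sum to
\[
d(m_t)_{(g,\dots,g)}(v_1,\dots,v_t) = (L_{g^{t-1}})_\ast \!\Bigl(\sum_{i=1}^{t} v_i\Bigr).
\]

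Next I would apply the chain rule. For $(w_1,\dots,w_t) \in (T_x X)^t$,
\[
d(\varphi^{*t})_{(x,\dots,x)}(w_1,\dots,w_t) = (L_{g^{t-1}})_\ast\!\Bigl(\sum_{i=1}^{t} d\varphi_x(w_i)\Bigr).
\]
Hence the image of $d(\varphi^{*t})_{(x,\dots,x)}$ equals $(L_{g^{t-1}})_\ast \bigl(\mathrm{Im}\, d\varphi_x\bigr)$. Since $\varphi$ is not smooth at $x$, the map $d\varphi_x : T_x X \to T_g G$ is not surjective; as $L_{g^{t-1}}$ is an automorphism of $G$, its pushforward is a linear isomorphism $T_g G \xrightarrow{\sim} T_{g^t} G$, so it carries the proper subspace $\mathrm{Im}\, d\varphi_x$ onto a proper subspace of $T_{g^t} G$. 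Therefore $d(\varphi^{*t})_{(x,\dots,x)}$ is not surjective and $\varphi^{*t}$ is not smooth at $(x,\dots,x)$.

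The only non-routine step is the collapsing of $d(m_t)_{(g,\dots,g)}$ to a single left translation of a sum, and this is exactly where the centrality hypothesis on $\varphi(x)$ is used; without centrality the left and right translation factors cannot be merged and the image of the differential can be strictly larger than $(L_{g^{t-1}})_\ast (\mathrm{Im}\, d\varphi_x)$, so the conclusion genuinely requires this hypothesis.
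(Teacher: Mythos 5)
Your proof is correct and follows essentially the same route as the paper: compute the differential of the $t$-fold multiplication at the central diagonal point, observe that centrality collapses the mixed left/right translation factors so the image of $d(\varphi^{*t})_{(x,\dots,x)}$ is a translate of $\mathrm{Im}\,d\varphi_x$, and conclude non-surjectivity. The paper states the identity $d(m_t)_{(g,\dots,g)}\circ(d\varphi_x,\dots,d\varphi_x)=\sum_i d\varphi_x(Y_i)$ somewhat loosely (implicitly identifying $T_gG$ with $T_{g^t}G$ by translation), whereas you make the $(L_{g^{t-1}})_*$ factor explicit; this is a useful clarification but not a different argument.
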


\begin{proof}
Write $m:G^{t}\rightarrow G$ for the multiplication map. Since $\varphi(x)$
is central, the following holds for any $Y_{1},...,Y_{t}\in T_{x}(X)$:
\begin{align*}
d\varphi_{(x,...,x)}^{*t}(Y_{1},...,Y_{t}) & =dm_{(\varphi(x),...,\varphi(x))}\circ(d\varphi_{x},...,d\varphi_{x})(Y_{1},...,Y_{t})\\
 & =\sum_{i=1}^{t}d\varphi_{x}(Y_{i})\subseteq\mathrm{Im}(d\varphi_{x}),
\end{align*}
so $d\varphi_{(x,...,x)}^{*t}$ is not surjective. 
\end{proof}
We therefore see that by convolving a non-smooth morphism sufficiently
many times, we may achieve certain singularity properties as in Proposition
\ref{prop: properties preserved under convolution} and in Theorem
\ref{Main result}, but one can not hope in general to obtain a smooth
morphism. That said, the following example shows that such a situation
might still occur.
\begin{example}
Let $G=U_{3}(\complex)$ be the group of upper triangular unipotent
matrices with complex values and consider the morphism $\varphi:\mathbb{A}_{\complex}^{3}\rightarrow G$
given by 
\[
\varphi(x_{1},x_{2},x_{3})=\left(\begin{array}{ccc}
1 & x_{1}-1 & x_{1}x_{3}\\
0 & 1 & x_{2}\\
0 & 0 & 1
\end{array}\right).
\]
Note that $\varphi$ is not smooth at $(0,x_{2},0)$ for any $x_{2}\in\complex$,
but $\varphi^{*2}$ is already a smooth morphism: 
\[
\varphi^{*2}(x_{1},x_{2},x_{3},y_{1},y_{2},y_{3})=\left(\begin{array}{ccc}
1 & x_{1}+y_{1}-2 & \left(x_{1}x_{3}+y_{1}y_{3}+(x_{1}-1)y_{2}\right)\\
0 & 1 & x_{2}+y_{2}\\
0 & 0 & 1
\end{array}\right).
\]
\end{example}

\section{Main analytic results \label{sec:Main-analytic-result} }

In this section we prove Theorems \ref{Main model theoretic result for vector spaces},
\ref{Main model theoretic result-Varieties}, \ref{Main model theoretic result for families of vector spaces},
and \ref{Model theoretic result- families of varieties}.

In \cite[Theorem 5.2]{GH19} we have shown that given a compactly
supported motivic function $h\in\mathcal{C}(\mathbb{A}_{\rats}^{n})$,
with $\left|h_{F}\right|$ integrable for every $F\in\mathrm{Loc}_{>}$,
then $\mathcal{F}(h_{F})(y)$ decays faster than $\left|y\right|{}^{\alpha}$
for some $\alpha\in\reals_{<0}$. We deduced that any compactly supported
motivic measure $\mu=\{\mu_{F}\}_{F\in\mathrm{Loc}_{>}}$ on $\{F^{n}\}_{F\in\mathrm{Loc}_{>}}$
has a continuous density after enough self-convolutions. Using the
analytic criterion for the (FRS) property (Theorem \ref{Analytic condition for (FRS)}),
we were then able to prove Theorem \ref{Main result} in the case
where $G$ is a vector space.

If $G$ is not abelian, a problem arises as the non-commutative Fourier
transform is not as well behaved with respect to the class $\mathcal{C}(G)$
of motivic functions. Thus, a direct generalization of the proof given
in \cite[Theorem 5.2]{GH19} by bounding the decay rate of the Fourier
transform is harder. Hence, we would like to show that compactly supported,
$L^{1}$, motivic functions become continuous after sufficiently many
self convolutions, without having to estimate the decay rate of their
Fourier transform. Theorems \ref{Main model theoretic result for vector spaces}
and \ref{Main model theoretic result-Varieties} solve this problem.
Indeed, Theorem \ref{Main model theoretic result-Varieties} easily
implies the following corollary: 
\begin{cor}
\label{Cor:Main model theoretic result for algebraic groups}Let $G$
be an algebraic $\rats$-group, and let $\mu$ be a motivic measure
on $G$, such that $\mu_{F}$ is supported on $G(\mathcal{O}_{F})$
and is absolutely continuous with respect to the Haar measure $\nu_{F}$
on $G(\mathcal{O}_{F})$ with density $f_{F}$. Then there exists
$\epsilon>0$ such that $f_{F}\in L^{1+\epsilon}(G(\mathcal{O}_{F}),\nu_{F})$
for every $F\in\mathrm{Loc}_{>}$. 
\end{cor}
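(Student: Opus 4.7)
The plan is to apply Theorem \ref{Main model theoretic result-Varieties} directly, with $X := G$ and $h := f$. Since $\rats$ has characteristic zero, the algebraic $\rats$-group $G$ is automatically smooth as a variety, so the smoothness hypothesis holds for free. The task then reduces to two verifications: that $f = \{f_F\}_{F \in \Loc_{>}}$ is a motivic function in $\mathcal{C}(G)$, and that it lies in $L^1_{\mathrm{Loc}}(G(F))$ for each $F \in \Loc_{>}$.

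For motivicity, I would fix a non-vanishing left-invariant top differential form $\omega^G$ on $G$, defined over $\rats$, and observe that the associated collection $\tau = \{|\omega^G|_F\}_{F \in \Loc_{>}}$ is a motivic measure on $G$, while the normalized Haar measure on $G(\mathcal{O}_F)$ is $\nu_F = c_F^{-1} \cdot \tau_F|_{G(\mathcal{O}_F)}$, where $c_F := \tau_F(G(\mathcal{O}_F))$. By the basic theory of motivic integration, $c_F$ is motivic as a function on the point. Writing $\mu_F$ locally on an affine cover $\{U_j\}$ of $G$ as $(h_j)_F|\omega_j|_F$ with $h_j \in \mathcal{C}(U_j)$, and comparing with the corresponding local expression of $\tau_F$ via a regular function $r_j$ with $\omega^G|_{U_j} = r_j \omega_j$, one finds $f_F = c_F \cdot h_j/r_j$ on $U_j(F) \cap G(\mathcal{O}_F)$. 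These local descriptions lie in $\mathcal{C}(U_j)$ and glue to a global element $f \in \mathcal{C}(G)$, extended by zero off $G(\mathcal{O}_F)$.

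For local integrability, $f_F$ is $L^1$ with respect to $\nu_F$ on $G(\mathcal{O}_F)$ by hypothesis, and is supported there; since $G(\mathcal{O}_F)$ is a compact open in $G(F)$, this forces $f_F \in L^1_{\mathrm{Loc}}(G(F))$. Theorem \ref{Main model theoretic result-Varieties} then yields a uniform $\epsilon > 0$ such that $f_F \in L^{1+\epsilon}_{\mathrm{Loc}}(G(F))$ for every $F \in \Loc_{>}$, and compactness of $G(\mathcal{O}_F)$ upgrades local $L^{1+\epsilon}$-integrability to the claimed global bound $f_F \in L^{1+\epsilon}(G(\mathcal{O}_F), \nu_F)$.

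The only real obstacle is the motivicity verification; namely, that the normalization $c_F$ and the comparison between $\omega^G$ and chart-specific forms $\omega_j$ stay within the class of motivic functions uniformly in $F$. Once this is granted, the corollary is an immediate consequence of Theorem \ref{Main model theoretic result-Varieties}.
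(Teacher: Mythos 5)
Your proof is correct and matches the paper's intended argument: the paper simply asserts that the corollary "easily follows" from Theorem~\ref{Main model theoretic result-Varieties}, and your write-up supplies exactly the routine verifications (smoothness of $G$, motivicity of $f$ via comparison of $\mu$ with the motivic measure $|\omega^G|_F$ and the normalizing constant $c_F$, and the translation between $L^s$ on the compact set $G(\mathcal{O}_F)$ and $L^s_{\mathrm{Loc}}$ on $G(F)$) that make it go through. A minor notational point: where you write $f_F = c_F\cdot h_j/r_j$ you of course mean $f_F = c_F\cdot h_j\cdot q_F^{\val(r_j)} = c_F\cdot h_j/|r_j|_F$, which is what makes the local expressions visibly lie in $\mathcal{C}(U_j)$.
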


Notice that in the setting of Corollary \ref{Cor:Main model theoretic result for algebraic groups},
since $f_{F}\in L^{1+\epsilon}(G(\mathcal{O}_{F}),\nu_{F})$ for every 
$F\in\mathrm{Loc}_{>}$, then after enough self-convolutions it will
have a continuous density (see Lemma \ref{lem: continuous function after enough convolutions}),
as desired. Corollary \ref{Cor:Main model theoretic result for algebraic groups}
will be used in Section \ref{sec:Proof-of-the main result} to deduce
Theorem \ref{Main result}.

\subsection{\label{subsec:Proof-of-Theorem}Proof of Theorem \ref{Main model theoretic result for families of vector spaces}}

We now prove Theorem \ref{Main model theoretic result for families of vector spaces}
(which directly implies Theorem \ref{Main model theoretic result for vector spaces}): 
\begin{thm}[Theorem \ref{Main model theoretic result for families of vector spaces}]
\label{thm: Main model theoretic result for families of vector spaces v2}
Let $h\in\mathcal{C}(\mathbb{A}_{\rats}^{n}\times Y)$ be a family
of motivic functions parameterized by a $\rats$-variety $Y$, and
assume that for every $F\in\mathrm{Loc}_{>}$ we have $h_{F}|_{F^{n}\times\{y\}}\in L^{1}(F^{n})$
for every $y\in Y(F)$. Then there exists $\epsilon>0$, such that for
 every  $F\in\Loc_{>}$ we have $h_{F}|_{F^{n}\times\{y\}}\in L^{1+\epsilon}(F^{n})$,
for every $y\in Y(\rats)$.
\end{thm}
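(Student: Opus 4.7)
The plan is to study the family of integrals
\[
I_h(s, F, y) := \int_{F^n} |h_F(x, y)|^s \, dx
\]
as a function of the real parameter $s$, and to show that the set of $s > 0$ for which $I_h(s, F, y) < \infty$ uniformly in $F \in \mathrm{Loc}_{>}$ and $y \in Y(F)$ is \emph{open}. Granting this, the hypothesis places $s = 1$ in this set, so some $[1, 1 + \epsilon]$ also lies in it, yielding the theorem.

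Following the paradigm sketched around equations (1.2)--(1.3) in the introduction, I would first apply a Denef-Pas cell decomposition to $\mathbb{A}^n \times Y$, uniformly in the parameter $y$, so that on each cell $h$ takes the motivic standard form of Definition~\ref{def:motivic function}. The first complication relative to the definable setting of \cite{BDOP13} is that $h_F(x, y) = \sum_i h_{i, F}(x, y)$ and individual summands need not be globally integrable. I would address this by a further refinement of the cell decomposition on which one summand dominates in absolute value, reducing the pointwise estimate of $|h_F|^s$ on each sub-cell to that of a single expression of the shape $|q_F^{\alpha_i(x,y)} P_i(x,y)|^s$, with $\alpha_i$ an $\mathcal{L}_{\mathrm{Pres}}$-linear function of the value-group coordinates and $P_i$ polynomial in residue-field and value-group data.

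Next, writing $I_h(s, F, y) = \sum_{k \in \ints} \mu_{k, F, y} \cdot q_F^{-ks}$ via the level sets of the valuation of the dominant term, and applying the uniform rectilinearization theorem (Theorem~\ref{thm:-(Uniform Rectilinearization)}) jointly in $(x, y)$, the integral can be rewritten as a finite sum of expressions of the form
\[
\sum_{(e_1, \ldots, e_l) \in \nats^{l}} q_F^{b_1(s, y) e_1 + \ldots + b_l(s, y) e_l} \, Q(e_1, \ldots, e_l),
\]
where $Q \in \reals[e_1, \ldots, e_l]$ and each $b_j(s, y) = c_j s + d_j(y)$ is affine-linear in $s$, with $c_j \in \rats$ independent of $F$ and $y$, and only finitely many values of $d_j(y)$ occurring across the decomposition. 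By Lemma~\ref{lem: convergence of sums}, each such sum converges iff $b_j(s, y) < 0$ for every $j$. Since these are strict inequalities and only finitely many pairs $(c_j, d_j(y))$ arise, and since all of them hold at $s = 1$ by the $L^1$ hypothesis, they continue to hold on some $[1, 1 + \epsilon]$ with $\epsilon > 0$ independent of $F$ and $y$, as desired.

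The main obstacle, as already flagged in the introduction, is the passage from definable to motivic $h$: global integrability of $\sum_i h_i$ does not imply integrability of any individual $h_i$, so one cannot simply reduce to \cite[Theorem B]{BDOP13} termwise. Overcoming this requires the refinement that isolates a dominant summand on each piece, combined with the appearance of the polynomial weight $Q$ in Lemma~\ref{lem: convergence of sums}, which absorbs the contributions of the subdominant summands without breaking the open-convergence-condition structure. Handling this bookkeeping uniformly in the parameter $y$ is where the uniform version of rectilinearization is essential, and is what allows the same $\epsilon$ to serve for all $y \in Y(F)$ (hence in particular for $y \in Y(\rats)$).
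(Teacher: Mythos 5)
Your high-level plan --- study $I_h(s,F,y)$, pass to level sets and exponential sums, apply uniform rectilinearization (Theorem~\ref{thm:-(Uniform Rectilinearization)}), and conclude via the open convergence criterion of Lemma~\ref{lem: convergence of sums} --- matches the paper's. The genuine gap is in the proposed fix for the motivic-versus-definable complication. You suggest refining the cell decomposition so that on each piece a single summand of $h = \sum_i h_i$ dominates in absolute value, and then arguing termwise. This fails for two reasons. First, ``$|h_i|\ge|h_j|$'' is not a Denef--Pas-definable condition uniformly in $F$: the summand $h_i = |Y_{i,F,x,y}|\,q_F^{\alpha_i(x,y)}\bigl(\prod_j\beta_{ij}\bigr)\bigl(\prod_j\tfrac{1}{1-q_F^{a_{ij}}}\bigr)$ involves residue-field counts $|Y_{i,F,x,y}|$ and $q_F$-dependent rational constants, so comparing two such quantities across $F$ (it amounts to comparing $\log_{q_F}$ of various bounded but $q_F$-dependent data) is not expressible as a uniform cell decomposition. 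Second, and more fundamentally, even if such a partition existed, dominance yields $|h_F|\le N\cdot|h_{i_0}|$ but \emph{not} the reverse inequality: because of cancellation the dominant term can be enormous on a set where $|h_F|$ is small, so the hypothesis $h_F\in L^1$ gives no control at all on the dominant term on that piece. This is exactly the cancellation obstruction you flag yourself, and the dominance refinement does not resolve it.

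The paper circumvents both issues by \emph{not} splitting the sum early. It keeps the full linear combination $\sum_i c_i\,q_F^{m_i}\prod_j n_{ij}$ inside the absolute value throughout the reduction to exponential sums and through rectilinearization (display~(4.6)). It then applies Lemma~\ref{lem: convergence of sums} at $\epsilon = 0$: the minimality hypothesis on the number of summands in that lemma is precisely what rules out cancellation, and the $L^1$ hypothesis therefore forces \emph{every} exponent coefficient to satisfy $d_{ij}(0)<0$. Only \emph{after} that does it use the quasi-norm inequality $|x+y|^{1+\epsilon}\le 2^\epsilon(|x|^{1+\epsilon}+|y|^{1+\epsilon})$ to pass to a termwise bound, which is now harmless because each individual term has strictly negative exponents and hence is itself summable for small $\epsilon>0$; the polynomial factors are absorbed by the exponential margin via the elementary Lemma~\ref{lemma 4.3}, and continuity of $d_{ij}(\epsilon)$ in $\epsilon$ finishes. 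In short, the quasi-norm step plays the role you assign to the dominance refinement, but it is deferred until after Lemma~\ref{lem: convergence of sums} has been invoked on the full sum, which is what makes it legitimate.

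A smaller point: you write the exponent coefficients as $b_j(s,y) = c_j s + d_j(y)$ and claim ``only finitely many values of $d_j(y)$ occur.'' In the paper's reduction the $y$-dependence produced by rectilinearization sits in the additive offsets $\gamma_j(\val(g'(y)))$ of the Presburger-linear reparameterization, hence contributes only $y$-dependent multiplicative prefactors $q_F^{d'_{ij}(\epsilon)\val(g'_j(y))}$; the coefficients $d_{ij}(\epsilon)$ of the rectilinear variables $e_t$ come out independent of $y$ and of $F$. If the coefficients were genuinely $y$-dependent, your finiteness assertion would require a separate argument, which is missing.
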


In order to prove Theorem \ref{thm: Main model theoretic result for families of vector spaces v2}
we show the following proposition: 
\begin{prop}
\label{prop: integral of h has tame form} Let $Y\subset\VF^{n'}$
be an $\Ldp$-definable set, and let $h\in\mathcal{C}(\VF^{n}\times Y)$ be a motivic function.
Then for every $F \in \Loc_{>}$ the integral $\int_{F^{n}}|h_{F}(x,y)|^{1+\epsilon}\,dx$
can be written as a finite sum of terms of the form 
\[
\sum\limits _{\vec{\xi}\in k_{F}^{\tilde{N}_{1}}}\left|B_{F,\vec{\xi},y}\right|\sum_{\lambda\in\Lambda_{y}}\sum_{(e_{1},\ldots,e_{s})\in\mathbb{Z}_{\geq0}^{s}}\left|\sum_{i=1}^{\tilde{N}_{2}}c_{i,F}(\vec{\xi},y,\lambda)q_{F}^{(\frac{d_{1}}{1+\epsilon}+d_{i1})e_{1}+\ldots+(\frac{d_{s}}{1+\epsilon}+d_{is})e_{s}}\prod\limits _{j=1}^{s}e_{j}^{b_{ij}}\right|^{1+\epsilon},
\]
where $\tilde{N}_{1},\tilde{N}_{2},s$ and $b_{ij}$ are non-negative
integers, $B\subseteq\RF^{\tilde{N}_{1}+r}\times Y$ is an $\Ldp$-definable
set, $\Lambda_{y}$ are finite Presburger sets depending on $y$,
$c_{i}$ are motivic functions and $d_{i},d_{ij}$ are integers (both  sides of the equation might be infinite for certain values of $y$ and $\epsilon$).
\end{prop}
\begin{proof}
By Definition \ref{def:motivic function}, $h_{F}(x,y)$ can be written
as 
\[
h_{F}(x,y)=\sum_{i=1}^{N}\left|Y_{i,F,x,y}\right|q_{F}^{\alpha_{i,F}(x,y)}\left(\prod_{j=1}^{N_{1}}\beta_{ij,F}(x,y)\right)\left(\prod_{j=1}^{N_{2}}\frac{1}{1-q_{F}^{a_{ij}}}\right),
\]
for $(x,y)\in F^{n}\times Y(F)\subseteq F^{n}\times F^{n'}$. By quantifier
elimination (see \cite[Theorem 4.1]{Pas89} or \cite[Theorem 2.1.1]{CL08}
for a reformulation more suitable to our needs), there exists a collection
$\{g_{i}\}_{i=1}^{N_{3}}$ of polynomials $g_{i}\in\mathbb{Z}[x_{1},\ldots,x_{n},y_{1},\ldots,y_{n'}]$,
such that each $Y_{i}\subseteq\VF^{n}\times Y\times\mathrm{RF}^{r}$
can be defined by a finite disjunction of formulas of the form 
\[
\chi(\ac(g_{1}(x,y)),\ldots,\ac(g_{N_{3}}(x,y)),\vec{\xi}')\wedge\theta(\val(g_{1}(x,y)),\ldots,\val(g_{N_{3}}(x,y))),
\]
where $\vec{\xi}'\in\mathrm{RF}^{r}$, $\chi$ is an $\mathcal{L}_{\mathrm{Res}}$
formula and $\theta$ is an $\mathcal{L}_{\mathrm{Pres}}$ formula.
Define the following level sets (parameterized by the value group
variables $\vec{m},\vec{n},\vec{k}$ and the residue field variables
$\vec{\xi}$): 
\begin{gather*}
A_{\vec{m},\vec{n}}(y):=\{x\in\VF^{n}:(\alpha_{i}(x,y),\beta_{ij}(x,y))=(m_{i},n_{ij})\text{ for all }1\leq i\leq N,~1\leq j\leq N_{1}\},\\
A'_{\vec{\xi},\vec{k}}(y):=\{x\in\VF^{n}:(\ac(g_{i}(x,y)),\val(g_{i}(x,y)))=(\xi_{i},k_{i})\text{ for all }1\leq i\leq N_{3}\}.
\end{gather*}
Clearly, for every $y\in Y(F)$ the functions $\alpha_{i,F}$ and
$\beta_{ij,F}$ are constant on each $A_{\vec{m},\vec{n},F}(y)$ and
$|Y_{i,F,x,y}|$ is constant on each $A'_{\vec{\xi},\vec{k},F}(y)$.
Furthermore, using the disjointness of the $\VG$ and $\RF$ sorts
there exists a finite partition $\ints^{N_{3}}=\bigcup_{i'=1}^{\hat{N}}D_{i'}$
such that each $|Y_{i,F,x,y}|$ is constant on $\bigcup\limits _{\vec{k}\in D_{i'}}A'_{\vec{\xi},\vec{k},F}(y)$
for each $i'$. Set $a_{F}(\vec{m},\vec{n},\vec{k},\vec{\xi},y)=\int_{A_{\vec{m},\vec{n}}(y)\cap A'_{\vec{k},\vec{\xi}}(y)}dx$.
Integrating $|h_{F}|^{1+\epsilon}$ over $F^{n}$ we get, 
\begin{equation}
\int_{F^{n}}\left|h_{F}(x,y)\right|^{1+\epsilon}dx=\sum_{\vec{\xi}\in k_{F}^{N_{3}}}\sum\limits _{i'=1}^{\hat{N}}\sum_{\begin{array}{c}
(\vec{m},\vec{n})\in\ints^{N(N_{1}+1)}\\
\vec{k}\in D_{i'}
\end{array}}a_{F}(\vec{m},\vec{n},\vec{k},\vec{\xi},y)\cdot\left|\sum_{i=1}^{N}c_{i,i',F}(\vec{\xi},y)\cdot q_{F}^{m_{i}}\cdot\prod_{j=1}^{N_{1}}n_{ij}\right|^{1+\epsilon},\label{eq:(4.1)}
\end{equation}
where $c_{i,i',F}(\vec{\xi},y)=\left|Y_{i,F,x,y}\right|\cdot\prod_{j=1}^{N_{2}}\frac{1}{1-q_{F}^{a_{ij}}}$
depends only on $\vec{\xi}$ and $y$ (as explained above).

Using \cite[Theorem 4.1]{BDOP13} repeatedly, as was done in \cite[proof of Theorem 5.1]{Pas89}
and \cite[proof of Theorem B]{BDOP13}, we can write $a_{F}(\vec{m},\vec{n},\vec{k},\vec{\xi},y)$
as a finite sum of terms of the form 
\begin{equation}
q_{F}^{-n}\sum_{\vec{\eta}\in k_{F}^{r'}}\sum_{\begin{array}{c}
\vec{l}\in\mathbb{Z}^{n}\\
\sigma(\vec{l},\vec{m},\vec{n},\vec{k},\vec{\xi},\vec{\eta},y)
\end{array}}q_{F}^{-l_{1}-\ldots-l_{n}},\label{eq:(4.2)}
\end{equation}
where $\sigma$ is an $\mathcal{L}_{\mathrm{DP}}$-formula. By quantifier
elimination, the formula $\sigma$ is equivalent to 
\[
\bigvee_{i=1}^{L}\chi_{i}(\vec{\xi},\vec{\eta},\ac(g'(y)))\wedge\theta_{i}(\vec{l},\vec{m},\vec{n},\vec{k},\val(g'(y))),
\]
where $\ac(g'(y)):=\ac(g'_{1}(y)),\ldots,\ac(g'_{N_{4}}(y))$ and
similarly for $\val(g'(y))$, and each $\chi_{i}$ is an $\mathcal{L}_{\mathrm{Res}}$-formula,
each $\theta_{i}$ is an $\mathcal{L}_{\mathrm{Pres}}$-formula and
$g'_{j}\in\ints[y_{1},\ldots,y_{n'}]$. Given $I\in\{0,1\}^{L}$,
set 
\[
B_{I}(\vec{\xi},y):=\{\vec{\eta}\in\RF^{r'}:\forall1\leq i\leq L,~\chi_{i}(\vec{\xi},\vec{\eta},y)\text{ holds }\Longleftrightarrow I(i)=1\}
\]
and $\theta_{I}:=\bigvee_{I(i)=1}\theta_{i}$. Then for each $y$
and $\vec{\xi}$, we have $\mathrm{RF}^{r'}=\underset{I\in\{0,1\}^{L}}{\bigcup}B_{I}(\vec{\xi},y)$,
and we can write (\ref{eq:(4.2)}) as 
\[
\sum_{I\in\{0,1\}^{L}}\left|B_{I,F}(\vec{\xi},y)\right|\cdot\sum_{\begin{array}{c}
\vec{l}\in\mathbb{Z}^{n}\\
\theta_{I}(\vec{l},\vec{m},\vec{n},\vec{k},y)
\end{array}}q_{F}^{-n-l_{1}-{\ldots}-l_{n}}.
\]
Set $N'=N(N_{1}+1)+N_{3}$ and $\theta_{I,i'}:=\{(\vec{l},\vec{m},\vec{n},\vec{k},y):\theta_{I}\wedge(\vec{k}\in D_{i'})\}$
then (\ref{eq:(4.1)}) can be written as a sum of finitely many expressions
of the form 
\[
\sum_{\vec{\xi}\in k_{F}^{N_{3}}}\sum_{i'=1}^{\hat{N}}\sum_{I\in\{0,1\}^{L}}\left|B_{I,F}(\vec{\xi},y)\right|\sum_{\begin{array}{c}
(\vec{l},\vec{m},\vec{n},\vec{k})\in\ints^{n+N'}\\
\theta_{I,i'}(\vec{l},\vec{m},\vec{n},\vec{k},y)
\end{array}}q_{F}^{-n-l_{1}-{\ldots}-l_{n}}\cdot\left|\sum_{i=1}^{N}c_{i,i',F}(\vec{\xi},y)\cdot q_{F}^{m_{i}}\cdot\prod_{j=1}^{N_{1}}n_{ij}\right|^{1+\epsilon}.
\]

It is enough to show that the above sum can be written as in the statement
of the proposition when we fix $I_{0}\in\{0,1\}^{L}$ and $1\leq i'_{0}\leq\hat{N}$.
Set $\theta:=\theta_{I_{0},i'_{0}}$ and $c_{i}=c_{i,i'_{0}}$, and
consider the sum 
\begin{equation}
\sum_{\begin{array}{c}
(\vec{l},\vec{m},\vec{n},\vec{k})\in\mathbb{Z}^{n+N'}\\
\theta(\vec{l},\vec{m},\vec{n},\vec{k},y)
\end{array}}q_{F}^{-l_{1}-{\ldots}-l_{n}}\left|\sum_{i=1}^{N}c_{i,F}(\vec{\xi},y)\cdot q_{F}^{m_{i}}\cdot\prod_{j=1}^{N_{1}}n_{ij}\right|^{1+\epsilon}.\label{eq:(4.3)}
\end{equation}
By uniform rectilinearization (Theorem \ref{thm:-(Uniform Rectilinearization)}),
we have the following decomposition: 
\[
\{(\vec{l},\vec{m},\vec{n},\vec{k},y)\in\ints^{n+N'}\times Y:\theta(\vec{l},\vec{m},\vec{n},\vec{k},y)\}=\bigcup\limits _{j=1}^{L'}C_{j},
\]
where for each $j$ there exists an $\mathcal{L}_{\mathrm{DP}}$-definable
isomorphism $\rho_{j}:C_{j}\to B_{j}'\subset Y\times\ints^{n+N'}$
over $Y$, such that for each $y\in Y(F)$, 
\begin{equation}
\rho_{j}|_{C_{j,y}}:C_{j,y}:=\{(\vec{l},\vec{m},\vec{n},\vec{k})\in\ints^{n+N'}:(\vec{l},\vec{m},\vec{n},\vec{k},y)\in C_{j}\}\xrightarrow{\sim}\Lambda_{y}\times\mathbb{Z}_{\geq0}^{s'_{j}}=B'_{j,y}\label{eq:(4.4)}
\end{equation}
is $\mathcal{L}_{\mathrm{Pres}}$-linear (see Definition \ref{def:-Linear function})
for some integer $s'_{j}\geq0$ depending only on $C_{j}$, and a
finite set $\Lambda_{y}\subset\ints_{\geq0}^{n+N'-s'_{j}}$ depending
on $y$. 
We can therefore write 
\begin{equation}
\rho_{j}|_{C_{j,y}}(\vec{l},\vec{m},\vec{n},\vec{k})=\beta_{j}(\vec{l},\vec{m},\vec{n},\vec{k})+\gamma_{j}(\val(g'(y))),\label{eq:(4.5)}
\end{equation}
where $\beta_{j}$ is $\mathcal{L}_{\mathrm{Pres}}$-linear and $\gamma_{j}$
is $\mathcal{L}_{\mathrm{Pres}}$-definable (functions to $\ints^{n+N'}$).
By Theorem \ref{Presburger Cell decomposition}, we may further assume
that $\gamma_{j}$ is $\mathcal{L}_{\mathrm{Pres}}$-linear. Since
the collection $\{C_{j}\}_{j=1}^{L'}$ is finite,
by breaking (\ref{eq:(4.3)}) to finitely many summands, it is enough
to consider 
\begin{equation}
\sum_{\begin{array}{c}
(\vec{l},\vec{m},\vec{n},\vec{k})\in\mathbb{Z}^{n+N'}\\
(\vec{l},\vec{m},\vec{n},\vec{k},y)\in C_{j}
\end{array}}q_{F}^{-l_{1}-\ldots-l_{n}}\left|\sum_{i=1}^{N}c_{i,F}(\vec{\xi},y)\cdot q_{F}^{m_{i}}\cdot\prod_{j=1}^{N_{1}}n_{ij}\right|^{1+\epsilon}.\label{eq:(4.55)}
\end{equation}
We thus may assume that $\rho_{j}|_{C_{j,y}}$ has the form of (\ref{eq:(4.5)}).

Fix $C:=C_{j}$, set $s':=s'_{j}$ and let $y\in Y(F)$. We can reparameterize
according to (\ref{eq:(4.4)}) and use (\ref{eq:(4.5)}) to write
the sum (\ref{eq:(4.55)}) as follows:
\begin{equation}
\sum_{\lambda\in\Lambda_{y}}\sum_{(e_{1},\ldots,e_{s'})\in\mathbb{Z}_{\geq0}^{s'}}q_{F}^{T(\vec{e},y,\lambda)}\left|\sum_{i=1}^{N}c_{i,F}(\vec{\xi},y)q_{F}^{T_{i}(\vec{e},y,\lambda)}P_{i,\lambda}(e_{1},\ldots,e_{s'},\val(g'(y)))\right|^{1+\epsilon},\label{eq:4.6}
\end{equation}
where $P_{i,\lambda}$ are polynomials with rational coefficients
and 
\[
T(\vec{e},y,\lambda)=\tau(\lambda)+\sum\limits _{j=1}^{s'}d_{j}e_{j}+\sum\limits _{j=1}^{N_{4}}d'_{j}\val(g'_{j}(y))~\text{ and }~T_{i}(\vec{e},y,\lambda)=\tau_{i}(\lambda)+\sum\limits _{j=1}^{s'}d_{ij}e_{j}+\sum\limits _{j=1}^{N_{4}}d'_{ij}\val(g'_{j}(y)),
\]
where $d_{j},d_{ij}$ are integers and $d'_{i},d'_{ij}$ are rational
numbers and $\tau(\lambda),\tau_{i}(\lambda)$ are $\ints$-valued
affine functions in $\lambda$ with rational coefficients. Now we
are done as we may write (\ref{eq:4.6}) as follows (note we abuse notation by keeping the symbols  $d_{j}$ and $d_{ij}$ although we now possibly sum over more elements):
\[
\sum_{\lambda\in\Lambda_{y}}\sum_{(e_{1},\ldots,e_{s'})\in\mathbb{Z}_{\geq0}^{s'}}\left|\sum_{i=1}^{{N_{5}}}c_{i,F}'(\vec{\xi},\lambda,y)q_{F}^{(\frac{d_{1}}{1+\epsilon}+d_{i1})e_{1}+\ldots+(\frac{d_{s'}}{1+\epsilon}+d_{is'})e_{s'}}\prod\limits _{j=1}^{s'}e_{j}^{b_{ij}}\right|^{1+\epsilon}.
\]
\end{proof}
To prove Theorem \ref{thm: Main model theoretic result for families of vector spaces v2}
we further need the following easy variant of \cite[Lemma 2.1.8]{CGH14}.

\begin{lem}
\label{lem: convergence of sums+}Let $h:\ints_{\geq0}^{s}\to\complex$
be a function of the form 
\[
h(e_{1},\ldots,e_{s})=\sum_{i=1}^{N}c_{i}q^{b_{i1}e_{1}+\ldots+b_{is}e_{s}}\prod\limits _{j=1}^{s}e_{j}^{a_{ij}}
\]
where $q\in\reals_{>1}$, $a_{ij},b_{ij},c_{i}\in\reals$, the tuples
$\{(b_{i1},\ldots,b_{is},a_{i1},\ldots,a_{is})\}_{i=1}^{N}$ are mutually
distinct and $c_{i}\neq0$. Then for every $r>0$ it holds that 
\[
\sum_{(e_{1},\ldots,e_{s})\in\mathbb{Z}_{\geq0}^{s}}\left|h(e_{1},\ldots,e_{s})\right|^{r}<\infty
\]
if and only $b_{ij}<0$ for every $i$ and $j$. 
\end{lem}

\begin{proof}[Proof of Theorem \ref{thm: Main model theoretic result for families of vector spaces v2}]
We may assume that $Y$ is affine and embedded in $\mathbb{A}_{\rats}^{n'}$.
By choosing a $\ints$-model we may assume that $Y\subseteq\VF^{n'}$
is an $\Ldp$-definable set. Using Proposition \ref{prop: integral of h has tame form}
the integral $\int_{F^{n}}\left|h_{F}(x,y)\right|^{1+\epsilon}dx$
can be written as a sum of finitely many expressions of the form 
\begin{equation}
\sum\limits _{\vec{\xi}\in k_{F}^{\tilde{N}_{1}}}\left|B_{F,\vec{\xi},y}\right|\sum_{\lambda\in\Lambda_{y}}\sum_{(e_{1},\ldots,e_{s})\in\mathbb{Z}_{\geq0}^{s}}\left|\sum_{i=1}^{{\tilde{N}_{2}}}c_{i,F}(\vec{\xi},y,\lambda)q_{F}^{(\frac{d_{1}}{1+\epsilon}+d_{i1})e_{1}+\ldots+(\frac{d_{s}}{1+\epsilon}+d_{is})e_{s}}\prod\limits _{j=1}^{s}e_{j}^{b_{ij}}\right|^{1+\epsilon}.\tag{\ensuremath{*}}\label{eq:summand}
\end{equation}
We may assume the tuples $\{(d_{i1},\ldots,d_{is},b_{i1},\ldots,b_{is})\}_{i=1}^{\tilde{N}_{2}}$
are mutually distinct and thus $\{(\frac{d_{1}}{1+\epsilon}+d_{i1},\ldots,\frac{d_{s}}{1+\epsilon}+d_{is},b_{i1},\ldots,b_{is})\}_{i=1}^{\tilde{N}_{2}}$
are mutually distinct for every $\epsilon>0$. We may furthermore
assume $\left|B_{F,\vec{\xi},y}\right|\cdot c_{i,F}(\vec{\xi},y,\lambda)\not\equiv0$
for infinitely many local fields $F\in\Loc$ with residual characteristic
as large as we like, as otherwise we may discard the $i$-th summand
from the inner summation in (\ref{eq:summand}). Since $h_{F}(x,y)$
is absolutely integrable on $F^{n}\times\{y\}$ for every $F\in\Loc_{>}$
and $y\in Y(F)$, each summand (\ref{eq:summand}) converges for every
$F\in\Loc_{>}$ and $y\in Y(F)$ when we set $\epsilon=0$. By our
assumption, for every $1\leq i\leq\tilde{N}_{2}$ there exist infinitely many $F\in\Loc_{>}$ 
with $y\in Y(F)$ such that $\left|B_{F,\vec{\xi},y}\right|\cdot c_{i,F}(\vec{\xi},y,\lambda)\neq0$
for some $\vec{\xi}$ and $\lambda$. Using Lemma \ref{lem: convergence of sums+},
it follows that $d_{j}+d_{ij}<0$ for every $1\leq j\leq s$.

Since $k_{F}^{\tilde{N}_{1}}$ and $\Lambda_{y}$ are finite for every
$F\in\Loc_{>}$ and $y\in Y(F)$ it is enough to show that for each
of the following sums there exists $\epsilon>0$ such that it converges:
\begin{equation}
\sum_{(e_{1},\ldots,e_{s})\in\mathbb{Z}_{\geq0}^{s}}\left|\sum_{i=1}^{\tilde{N}_{2}}c_{i,F}(\vec{\xi},y,\lambda)q_{F}^{(\frac{d_{1}}{1+\epsilon}+d_{i1})e_{1}+\ldots+(\frac{d_{s}}{1+\epsilon}+d_{is})e_{s}}\prod\limits _{j=1}^{s}e_{j}^{b_{ij}}\right|^{1+\epsilon}.\tag{\ensuremath{**}}\label{eq:inner summand}
\end{equation}
Using Lemma \ref{lem: convergence of sums+} again, we are done as
there exists $\epsilon>0$ satisfying the finitely many conditions
$\left\{ \frac{d_{j}}{1+\epsilon}+d_{ij}<0\right\} _{i,j}$. 
\end{proof}

\subsection{Proof of Theorems \ref{Main model theoretic result-Varieties} and
\ref{Model theoretic result- families of varieties}}

For the proof of Theorem \ref{Model theoretic result- families of varieties}
we need the following lemma: 
\begin{lem}
\label{lem:etale preserves Lp} Let $X$ and $Y$ be smooth $F$-varieties,
with $F\in\mathrm{Loc}$, $\omega_{X}$ and $\omega_{Y}$ be invertible
top forms on $X$ and $Y$ respectively, and let $\varphi:X\rightarrow Y$
be an \'etale map. Let $\mu_{F}$ be a compactly supported non-negative
measure on $X(F)$, which is absolutely continuous with respect to
$|\omega_{X}|_{F}$, with density $f$. Then $\varphi_{*}\mu_{F}$
is absolutely continuous with respect to $|\omega_{Y}|_{F}$ with
density $\widetilde{f}$, and for any $1\leq s<\infty$, we have $\widetilde{f}\in L^{s}(Y(F),|\omega_{Y}|_{F})$
if and only if $f\in L^{s}(X(F),|\omega_{X}|_{F})$. 
\end{lem}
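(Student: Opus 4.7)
The plan is to reduce to a local change-of-variables computation via a partition of unity. Since $\varphi$ is étale, the pullback $\varphi^*\omega_Y$ is a non-vanishing top form on $X$, so there is a nowhere vanishing regular function $J$ on $X$ with $\varphi^*\omega_Y = J \cdot \omega_X$. Moreover, étale morphisms between smooth $F$-varieties induce local analytic isomorphisms on $F$-points: the inverse function theorem applies at each $x \in X(F)$ since $d\varphi_x$ is invertible, so every $x \in X(F)$ admits an open neighborhood $U \subseteq X(F)$ on which $\varphi|_U : U \to \varphi(U)$ is an analytic diffeomorphism onto an open subset of $Y(F)$.

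Using compactness of $\mathrm{supp}(\mu_F)$, I would cover the support by finitely many such neighborhoods $U_1, \ldots, U_k$, set $V_i := \varphi(U_i)$, and pick a non-negative continuous partition of unity $\{\chi_i\}$ subordinate to this cover. Setting $f_i := f \cdot \chi_i$, one has $f = \sum_i f_i$ with each $f_i$ non-negative and compactly supported in $U_i$. The change of variables formula applied to the diffeomorphism $\varphi|_{U_i}$ shows that $(\varphi|_{U_i})_*(f_i |\omega_X|_F)$ is absolutely continuous with respect to $|\omega_Y|_F$ with density
\[
\widetilde{f_i}(y) = f_i(\varphi|_{U_i}^{-1}(y)) \cdot |J(\varphi|_{U_i}^{-1}(y))|_F^{-1}
\]
supported in $V_i$; by non-negativity, $\widetilde{f} = \sum_i \widetilde{f_i}$. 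The same change of variables yields
\[
\int_{V_i} |\widetilde{f_i}|^s \, |\omega_Y|_F = \int_{U_i} |f_i|^s \, |J|_F^{1-s} \, |\omega_X|_F.
\]
Since $J$ is nowhere vanishing and $\mathrm{supp}(f_i)$ is compact in $U_i$, the function $|J|_F$ is bounded above and below by positive constants on $\mathrm{supp}(f_i)$, hence $\|\widetilde{f_i}\|_{L^s(V_i,|\omega_Y|_F)} \asymp \|f_i\|_{L^s(U_i,|\omega_X|_F)}$.

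It remains to assemble the two implications. For the forward direction, the triangle inequality in $L^s$ (valid since $s \geq 1$) gives $\|\widetilde{f}\|_s \leq \sum_i \|\widetilde{f_i}\|_s \lesssim \sum_i \|f_i\|_s \lesssim \|f\|_s$. For the converse, the non-negativity of each $\widetilde{f_i}$ yields the pointwise bound $\widetilde{f_i} \leq \widetilde{f}$, so $\|f_i\|_s \lesssim \|\widetilde{f_i}\|_s \leq \|\widetilde{f}\|_s$, and therefore $\|f\|_s \leq \sum_i \|f_i\|_s \lesssim \|\widetilde{f}\|_s$. The one technical point requiring care is the uniform Jacobian bound on $\mathrm{supp}(f_i)$, which follows from continuity of $|J|_F$, non-vanishing of $J$, and compactness of the support; this is also where non-negativity of $\mu_F$ enters essentially, guaranteeing that the passage from the local densities $\widetilde{f_i}$ to the global density $\widetilde{f}$ involves no cancellation.
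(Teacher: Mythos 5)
Your proof is correct and shares the paper's two essential ingredients: the local diffeomorphism structure of étale maps on $F$-points, and the boundedness of $|J|_F = |\varphi^*\omega_Y/\omega_X|_F$ on the compact support, which makes the $L^s$ norms commensurate chart by chart. The organization, though, is genuinely different. The paper works on the \emph{target}: it fixes $y \in Y(F)$, writes $\widetilde{f}(y)$ as a finite sum over the fiber $\varphi^{-1}(y)(F)$, then bounds $\widetilde{f}(y)^s$ above and below by $\sum_{x \in \varphi^{-1}(y)(F)} f(x)^s(\omega_{\varphi,F})_y(x)^s$, using the quasi-norm inequality $|a+b|^s \leq 2^{s-1}(|a|^s + |b|^s)$ together with a uniform bound $M$ on fiber sizes from quasi-finiteness; it then integrates this pointwise two-sided estimate over a small compact open $V$ on which $\varphi^{-1}(V)$ splits as a disjoint union of copies of $V$. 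You instead work on the \emph{source}: a finite cover of $\mathrm{supp}(\mu_F)$ by étale charts and a subordinate partition of unity $\{\chi_i\}$ give $f = \sum_i f_i$ with each $f_i$ pushing forward to an explicit $\widetilde{f_i}$, after which you assemble via the $L^s$ triangle inequality (forward direction) and the pointwise monotonicity $\widetilde{f_i} \leq \widetilde{f}$ coming from non-negativity (converse direction). Your route has the advantage of not needing a uniform fiber-size bound or the elementary quasi-norm estimate on finite sums; instead it leans on the partition of unity to isolate single-sheet charts up front, at the small cost of introducing the auxiliary cutoffs $\chi_i$. The paper's route, by contrast, keeps the argument pointwise on $Y(F)$, which perhaps makes the role of the quasi-finite structure more visible. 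Both are valid; the choice is a matter of taste.
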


\begin{proof}
Recall $f$ is positive. Since $\varphi$ is \'etale, it is quasi-finite
and smooth, and we have 
\[
\widetilde{f}(y)=\sum_{x\in\varphi^{-1}(y)(F)}f(x)\cdot\omega_{\varphi,F}(x),
\]
where $\omega_{\varphi,F}(x):=\left|\frac{\omega_{X}}{\varphi^{*}\omega_{Y}}\right|_{F}(x)$
is an invertible function. Let $B$ be a compact open subset of $X(F)$
such that $\mathrm{supp}(f)\subseteq B$ and $1\leq s<\infty$. Then
there exist constants $c(s),C(s)>0$ such that $c(s)<\omega_{\varphi,F}(x)^{s-1}<C(s)$
for every $x\in B$. For any $y\in Y(F)$ we may find an open compact
neighborhood $V\subseteq Y(F)$, such that $\varphi^{-1}(V)$ is a
disjoint union of $U_{1},...,U_{N}$, where each $U_{i}$ diffeomorphic
to $V$. 
Now, since $\varphi$ is quasi-finite  it holds that $\underset{y\in Y(F)}{\mathrm{sup}}\left|\varphi^{-1}(y)(F)\right|<M$
for some $M\in\nats$, and 
 since $|\cdot|^{s}$
is a quasi-norm there exists $d(s)>0$ such that 
\[
\sum_{x\in\varphi^{-1}(y)(F)}f(x)^{s}\cdot \omega_{\varphi,F}(x)^s\leq\widetilde{f}(y)^{s}\leq d(s)\sum_{x\in\varphi^{-1}(y)(F)}f(x)^{s}\cdot\omega_{\varphi,F}(x)^{s}.
\]
Since $f$ and $\widetilde{f}$ are compactly supported, the following
implies the claim: 
\begin{align*}
c(s)\int_{\varphi^{-1}(V)}f(x)^{s}\left|\omega_{X}\right|_{F} & 
<\int_{\varphi^{-1}(V)}f(x)^{s}\cdot \omega_{\varphi,F}(x)^{s-1}\left|\omega_{X}\right|_{F}
=\int_{\varphi^{-1}(V)}f(x)^{s}\cdot \omega_{\varphi,F}(x)^{s}\left|\varphi^{*}\omega_{Y}\right|_{F}\\
 & =\sum_{i=1}^{N}\int_{U_{i}}f(x)^{s}\cdot \omega_{\varphi,F}(x)^{s}\left|\varphi^{*}\omega_{Y}\right|_{F}\leq\int_{V}\widetilde{f}(y)^{s}\left|\omega_{Y}\right|_{F}\\
 & \leq d(s)\sum_{i=1}^{N}\int_{U_{i}}f(x)^{s}\cdot \omega_{\varphi,F}(x)^{s}\left|\varphi^{*}\omega_{Y}\right|_{F}\\
 & =d(s)\int_{\varphi^{-1}(V)}f(x)^{s}\cdot \omega_{\varphi,F}(x)^{s-1}\left|\omega_{X}\right|_{F}\leq C(s)d(s)\int_{\varphi^{-1}(V)}f(x)^{s}\left|\omega_{X}\right|_{F}.
\end{align*}
\end{proof}
\begin{proof}[Proof of Theorems \ref{Main model theoretic result-Varieties} and
\ref{Model theoretic result- families of varieties}]
Theorem \ref{Main model theoretic result-Varieties} follows from
Theorem \ref{Model theoretic result- families of varieties} by choosing
$Y=\mathrm{Spec}(\rats)$, so it is left to prove Theorem \ref{Model theoretic result- families of varieties}.

Since $\varphi$ is smooth we may assume that $Y$ is affine and that
$\varphi:X\rightarrow Y$ factors as follows (where $n:=\mathrm{dim}X-\mathrm{dim}Y$),
\[
\varphi:X\overset{\psi}{\rightarrow}\mathbb{A}_{\rats}^{n}\times Y\overset{\pi}{\rightarrow}Y
\]
with $\pi$ the projection to the second coordinate and $\psi$ \'etale.
For any $y\in Y(F)$ we can consider the base change $\psi|_{X_{y}}:X_{y}\rightarrow\mathbb{A}_{F}^{n}\times\{y\}$
which is an \'etale $F$-morphism. Set $f:=\psi_{*}(h)\in\mathcal{C}(\mathbb{A}_{\rats}^{n}\times Y)$
and notice that $f_{F}|_{F^{n}\times\{y\}}\in L_{\mathrm{Loc}}^{1}(F^{n})$.
We would like to find $\epsilon>0$ such that for $F\in\mathrm{Loc}_{>}$,
$f_{F}|_{F^{n}\times\{y\}}\in L_{\mathrm{Loc}}^{1+\epsilon}(F^{n})$
for any $y\in Y(F)$.

Let $\widetilde{f}\in\mathcal{C}(\mathbb{A}_{\rats}^{n}\times Y\times\mathbb{A}_{\rats}^{1})$
be the pullback of $f$ by the projection to $\mathbb{A}_{\rats}^{n}\times Y$,
and consider $\widetilde{f}\cdot1_{B}\in\mathcal{C}(\mathbb{A}_{\rats}^{n}\times Y\times\mathbb{A}_{\rats}^{1})$
with $B=\{(x,y,t):\val(x)>\val(t)\}\subseteq\VF^{n}\times Y\times\VF$,
where $\val(x)=\underset{1\leq i\leq n}{\mathrm{min}}\val(x_{i})$
for $x=(x_{1},...,x_{n})$. By Theorem \ref{Main model theoretic result for families of vector spaces},
since $(\widetilde{f_{F}}\cdot1_{B(F)})|_{F^{n}\times\{(y,t)\}}\in L^{1}(F^{n})$
for any $(y,t)\in Y(F)\times F$, then there exists $\epsilon>0$
such that for $F\in\mathrm{Loc}_{>}$ we have $(\widetilde{f_{F}}\cdot1_{B(F)})|_{F^{n}\times\{(y,t)\}}\in L^{1+\epsilon}(F^{n})$
for any $(y,t)$. But this implies that $f_{F}|_{F^{n}\times\{y\}}\in L_{\mathrm{Loc}}^{1+\epsilon}(F^{n})$
for any $y\in Y(F)$. By Lemma \ref{lem:etale preserves Lp}, $h_{F}|_{X_{y}(F)}\in L_{\mathrm{Loc}}^{1+\epsilon}$
for any $y\in Y(F)$. 
\end{proof}

\section{Proof of the main algebro-geometric results \label{sec:Proof-of-the main result}}

In this section we prove Theorems \ref{Main result}, \ref{main result for families}
and \ref{FRS on complexity}. 
\begin{thm}[{Theorem \ref{Main result}}]
\label{Main result v2} Let $X$ be a smooth $K$-variety, $G$ be
a $K$-algebraic group and let $\varphi:X\to G$ be a strongly dominant
morphism. Then there exists $N\in\mathbb{N}$ such that for any $n>N$
the $n$-th convolution power $\varphi^{*n}$ is (FRS). 
\end{thm}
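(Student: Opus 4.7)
The plan is to carry out the four-step reduction that the paper's introduction already advertises: reduce the base field to $\mathbb{Q}$, translate the (FRS) conclusion into a statement about continuity of pushforward densities, apply the model-theoretic $L^{1+\epsilon}$ bound (Corollary \ref{Cor:Main model theoretic result for algebraic groups}) obtained from Theorem \ref{Main model theoretic result-Varieties}, and then finish with Young's inequality plus Lemma \ref{lem: continuous function after enough convolutions}.

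First I would invoke the spreading-out argument (Proposition \ref{prop:reduction to Q}) to reduce to the case $K=\mathbb{Q}$, so that $X$, $G$ and $\varphi$ can be viewed inside the motivic framework. Next I would fix a motivic measure $\mu=\{\mu_F\}_{F\in\mathrm{Loc}}$ on $X$ which is smooth, non-negative and supported on $X(\mathcal{O}_F)$, whose existence is guaranteed by \cite[Proposition 3.14]{GH}. Setting $\sigma:=\varphi_*\mu$, the pushforward is a motivic measure on $G$ supported on the compact group $G(\mathcal{O}_F)$ (after enlarging the residue characteristic if necessary so that $G(\mathcal{O}_F)$ makes sense and is the relevant compact model). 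By the analytic characterization of (FRS) (Theorem \ref{Analytic condition for (FRS)}, applied to $\varphi^{*n}$ and the product measure $\mu\times\cdots\times\mu$), it suffices to produce $N\in\nats$ such that the density of $\sigma_F^{*n}=\varphi^{*n}_*(\mu_F\times\cdots\times\mu_F)$ with respect to the normalized Haar measure on $G(\mathcal{O}_F)$ is continuous for every $F\in\mathrm{Loc}_>$ and every $n\geq N$.

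To reach continuous density, I would first argue that $\sigma_F$ admits a density $f_F\in L^1(G(\mathcal{O}_F))$ for each $F\in\mathrm{Loc}_>$: strong dominance guarantees that $\varphi$ is generically smooth on each irreducible component of $X$, so the smoothness of $\mu_F$ forces $\sigma_F$ to be absolutely continuous with respect to Haar (the non-smooth locus is a proper subvariety, hence has $\mu_F$-measure zero), and $L^1$-ness is automatic from $\sigma_F(G(\mathcal{O}_F))\leq \mu_F(X(\mathcal{O}_F))<\infty$. Since pushforwards of motivic measures are motivic, $\sigma$ is motivic, and Corollary \ref{Cor:Main model theoretic result for algebraic groups} (immediate from Theorem \ref{Main model theoretic result-Varieties}) upgrades this to $f_F\in L^{1+\epsilon}(G(\mathcal{O}_F))$ for some $\epsilon>0$ independent of $F\in\mathrm{Loc}_>$. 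Iterating Young's convolution inequality, $f_F^{*k}$ lies in $L^s(G(\mathcal{O}_F))$ for any prescribed $1<s<\infty$ once $k=k(s,\epsilon)$ is large enough, and taking $s>1$ and invoking Lemma \ref{lem: continuous function after enough convolutions} produces a further uniform $N'$ such that $f_F^{*N'k}$ is continuous for every $F\in\mathrm{Loc}_>$. Setting $N:=N'k$, the measure $\sigma_F^{*N}$ has continuous density and Theorem \ref{Analytic condition for (FRS)} concludes that $\varphi^{*n}$ is (FRS) for all $n\geq N$.

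The main obstacle is the $L^{1+\epsilon}$ upgrade, i.e.\ Corollary \ref{Cor:Main model theoretic result for algebraic groups}, which rests on the motivic analytic result Theorem \ref{Main model theoretic result-Varieties}. This is precisely where the present approach diverges from \cite{GH}: since the non-commutative Fourier transform is not well-behaved on the class $\mathcal{C}(G)$ of motivic functions, one cannot simply chase Fourier decay as in the vector-space case. Everything else in the argument (the reduction to $\mathbb{Q}$, the analytic criterion, Young's inequality, the final continuity step) is essentially formal once the model-theoretic theorem is in hand.
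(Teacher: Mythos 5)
Your proposal follows the paper's own proof line for line: reduce to $K=\mathbb{Q}$ via Proposition \ref{prop:reduction to Q}, choose a smooth motivic measure $\mu$ as in \cite[Proposition 3.14]{GH}, use Corollary \ref{Cor:Main model theoretic result for algebraic groups} to upgrade $\varphi_*\mu_F$ from $L^1$ to $L^{1+\epsilon}$ uniformly in $F$, and conclude via Lemma \ref{lem: continuous function after enough convolutions} and Proposition \ref{prop:reduction to an analytic}. The only cosmetic difference is that you apply Young's inequality yourself before invoking Lemma \ref{lem: continuous function after enough convolutions}, but that lemma already internalizes the Young step, so your extra passage from $L^{1+\epsilon}$ to $L^s$ is harmless redundancy rather than a genuine deviation.
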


We prove Theorem \ref{Main result v2} by first reducing to the case
$K=\rats$, using the same strategy as in the proof of \cite[Proposition 6.1]{GH19}.
Hence we want to show the following: 
\begin{prop}
\label{prop:reduction to Q} It is enough to prove Theorem \ref{Main result v2}
for $K=\rats$. 
\end{prop}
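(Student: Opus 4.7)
The plan follows the spreading-out argument from \cite[Section 6]{GH}, adapted to the case of a general algebraic group $G$.

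\emph{Step 1 (reduction to $K$ finitely generated over $\rats$).} I would first observe that $X$, $G$, and $\varphi$ are of finite type over $K$, and hence descend to data $(X_0, G_0, \varphi_0)$ over some finitely generated subfield $K_0 \subseteq K$. In characteristic zero, both (FRS) and strong dominance are preserved and detected by faithfully flat base change of the base field: flatness, geometric reducedness of fibers, and rational singularities on fibers are all faithfully flat local. Consequently, $\varphi^{*n}$ is (FRS) if and only if $\varphi_0^{*n}$ is, and we may assume $K$ is finitely generated over $\rats$.

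\emph{Step 2 (spreading over $\rats$).} Writing $K = \mathrm{Frac}(A)$ for a finitely generated $\rats$-domain $A$ and setting $Y := \mathrm{Spec}(A)$, standard limit arguments (EGA IV, \S 8) allow one, after replacing $Y$ with a dense open subscheme, to extend the data $(X, G, \varphi)$ to a family $\widetilde{\varphi} : \widetilde{X} \to \widetilde{G}$ over $Y$, with $\widetilde{X} \to Y$ smooth, $\widetilde{G} \to Y$ a smooth group scheme, and $\widetilde{\varphi}_y$ strongly dominant for every $y \in Y$. Using Noether normalization on $A$ and possibly shrinking further, I would arrange that $Y$ admits a dominant morphism to $\mathbb{A}^d_\rats$ with $d = \mathrm{tr.deg}(K/\rats)$.

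\emph{Step 3 (conclusion via openness).} By \cite{Elk78} and \cite[Corollary 2.2]{AA16}, (FRS) is preserved under small deformations, so the set
\[
V_n := \{y \in Y : \widetilde{\varphi}_y^{*n}\text{ is (FRS)}\}
\]
is Zariski-open in $Y$. Since the generic point $\eta \in Y$ has residue field $K$ and fiber $\varphi$, to prove that $\varphi^{*n}$ is (FRS) it suffices to show $V_n$ is nonempty, as then $\eta \in V_n$ by openness. To produce a point in $V_n$, I would lift a $\rats$-rational point of $\mathbb{A}^d_\rats$ in the image of $Y$ to $y_0 \in Y$; the fiber $\widetilde{\varphi}_{y_0}$ is a strongly dominant morphism over $k(y_0)$, which is algebraic over $\rats$. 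Using invariance of (FRS) under field extensions of characteristic zero, this reduces to a statement about a $\rats$-morphism, and the assumed $\rats$-case of Theorem \ref{Main result v2} yields $N$ with $\widetilde{\varphi}_{y_0}^{*n}$ being (FRS) for $n > N$, so $y_0 \in V_n$ and hence $\eta \in V_n$.

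The hard part will be the last step: arranging that a chosen fiber over the $\rats$-spread genuinely falls under the assumed $\rats$-case, since closed points of $Y$ may have residue field a nontrivial number field. This is handled by field-extension invariance of (FRS) together with a careful choice of model in which $\rats$-rational fibers are available, exactly as in the vector space analogue treated in \cite[Section 6]{GH}.
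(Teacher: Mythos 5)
Your proposal takes a genuinely different route from the paper, and it has a gap that I do not see how to close within your framework.

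\textbf{What the paper does.} The paper follows the architecture of \cite[Proposition 6.1]{GH}: after reducing to a finitely generated $K$ (the analogue of your Step 1), the key new ingredient is Proposition~\ref{prop: generalization of 6.3}, which says that if $\varphi^{*N}$ is (FRS) at all diagonal points $(x,\dots,x)$ with $x\in X(\overline{K'})$, then $\varphi^{*2N}$ is (FRS) everywhere. Its proof works pointwise: given $(x_1,\dots,x_{2N})$, one chooses a $p$-adic embedding for a suitable finitely generated field, picks smooth measures $\mu_i$ near each $x_i$, and upgrades via Parseval and the generalized H\"older inequality for the non-commutative Fourier transform (Theorem~\ref{Proposition Fourier L2} and Proposition~\ref{prop:(Generalization-of-Holder}). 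This diagonal-to-everywhere step is what makes the reduction to $\rats$ go through without any parameter-space openness statement.

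\textbf{The gap in Step 3.} You assert that $V_n := \{y\in Y : \widetilde{\varphi}_y^{*n}\text{ is (FRS)}\}$ is Zariski-open in $Y$, citing \cite{Elk78} and \cite[Corollary 2.2]{AA16}. What those results give is openness of the (FRS) locus in the \emph{source} of a single morphism, i.e.\ $\{x\in \widetilde{X}^n : \widetilde{\varphi}^{*n}\text{ is (FRS) at }x\}$ is open in $\widetilde{X}^n$. Passing to openness in the \emph{parameter space} $Y$ requires that the complement (a closed set of the source) have closed image in $Y$, which in general fails since $\widetilde{X}^n\to Y$ is not proper. This is precisely why the paper's family statement, Theorem~\ref{main result for families}, only asserts constructibility of $Y'$ in part (1), and why part (2) needs the full analytic machinery of Theorem~\ref{Model theoretic result- families of varieties} rather than a bare openness claim. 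Your specialization argument (from $y_0$ to the generic point $\eta$) therefore does not follow from the cited facts.

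\textbf{A second, circularity issue.} Your Step 3 acknowledges that $k(y_0)$ is a number field $L$, not $\rats$, and appeals to ``field-extension invariance of (FRS)'' to reduce to a $\rats$-morphism. Invariance under base change lets you pass from $L$ to $\overline{\rats}$, but not forget the $L$-structure of $G_{y_0}$: viewing an $L$-group as a $\rats$-scheme does not give a $\rats$-group. Reducing from a number field to $\rats$ is precisely the content of Proposition~\ref{prop:reduction to Q} for $K=L$, so an unjustified appeal here is circular. This can be repaired (e.g.\ via Weil restriction $\mathrm{Res}_{L/\rats}$, which preserves convolution powers, with (FRS) then descending along the base-change identity $\mathrm{Res}_{L/\rats}(X)\times_{\rats}\overline{\rats}\cong\prod_\sigma X^\sigma$), but you would need to spell this out, and the paper's Proposition~\ref{prop: generalization of 6.3} avoids the issue entirely by working with $p$-adic points of a finitely generated field directly.
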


The proof of Proposition \ref{prop:reduction to Q} is very similar
to the proof in the case where $G$ is a vector space (\cite[Proposition 6.1]{GH19}).
The proof of \cite[Proposition 6.1]{GH19} consists of four statements
\cite[Lemmas 6.2, 6.4 and 6.5 and Proposition 6.3]{GH19}. Lemmas
6.2, 6.4 and 6.5 of \cite{GH19} hold if $G$ is any algebraic group
(i.e.~not necessarily a vector space). \cite[Proposition 6.3]{GH19}
can also be generalized to an algebraic group $G$, only this time
one needs to use a non-commutative Fourier transform. For completeness,
we prove the generalization of \cite[Proposition 6.3]{GH19}, and
thus finish the proof of Proposition \ref{prop:reduction to Q}. 
\begin{prop}[{{{Generalization of \cite[Proposition 6.3]{GH19}}}}]
\label{prop: generalization of 6.3} Let $\varphi:X\rightarrow G$
be a morphism over a finitely generated field $K'/\rats$. Assume
there exists $N\in\nats$ such that the $N$-th convolution power
$\varphi^{*N}$ is (FRS) at the diagonal point $(x,\ldots,x)$ for each $x\in X(\overline{K'})$.
Then $\varphi^{*2N}$ is (FRS). 
\end{prop}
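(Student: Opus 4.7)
The plan is to reduce the (FRS) of $\varphi^{*2N}$ to an analytic statement via Theorem~\ref{Analytic condition for (FRS)} and then to exploit the convolution structure of $\varphi^{*2N}$ through the non-commutative Fourier machinery recalled in Subsection~2.1. Fix an arbitrary $p=(x_1,\dots,x_{2N})\in X^{2N}(\overline{K'})$; after enlarging the still finitely generated field $K'$ we may assume $p\in X^{2N}(K')$. By the equivalence (1)$\Leftrightarrow$(3) of Theorem~\ref{Analytic condition for (FRS)}, it suffices to exhibit a local field $F\supseteq K'$ and a smooth, compactly supported, non-negative measure $\mu$ on $X^{2N}(F)$ with $\mu(p)\neq 0$ such that $(\varphi^{*2N})_{*}(\mu)$ has continuous density on $G(F)$.

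Writing $X^{2N}=X^N\times X^N$ and setting $u=(x_1,\dots,x_N)$, $v=(x_{N+1},\dots,x_{2N})$, I would take $\mu=\mu_1\times\mu_2$ with each $\mu_i$ smooth, compactly supported and non-vanishing at $u$ (respectively $v$). The identity $\varphi^{*2N}(y,y')=\varphi^{*N}(y)\cdot\varphi^{*N}(y')$ turns the pushforward into a convolution on $G(F)$:
\[
(\varphi^{*2N})_{*}(\mu_1\times\mu_2)\;=\;(\varphi^{*N})_{*}(\mu_1)\ast(\varphi^{*N})_{*}(\mu_2).
\]
Combining the diagonal-(FRS) hypothesis with (1)$\Leftrightarrow$(2) of Theorem~\ref{Analytic condition for (FRS)}, the (FRS) locus $U\subseteq X^N$ of $\varphi^{*N}$ is Zariski open and contains the full diagonal $\Delta(X)$; in particular, for any $\mu'$ supported in $U(F)$, the pushforward $(\varphi^{*N})_{*}(\mu')$ has continuous, compactly supported density and thus lies in $L^{2}$ on any compact open neighborhood of its support.

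Assuming one can extend this local $L^{2}$-integrability to both $(\varphi^{*N})_{*}(\mu_i)$, the rest of the argument is pure Fourier theory: Plancherel (Theorem~\ref{Proposition Fourier L2}(2)) places each $\mathcal{F}((\varphi^{*N})_{*}(\mu_i))$ in $\mathcal{H}_{2}(\hat{G})$; Proposition~\ref{prop convolution+Fourier} converts the convolution into the pointwise product of Fourier transforms; the generalized H\"older inequality (Proposition~\ref{prop:(Generalization-of-Holder}), applied with $p_{1}=p_{2}=2$ and $r=1$, lands this product in $\mathcal{H}_{1}(\hat{G})$; and Theorem~\ref{Fourier transform of functions}(2) then identifies the convolution as an element of $\mathcal{A}(G)$, and in particular as a continuous function. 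Since $p$ was arbitrary, this verifies (FRS) of $\varphi^{*2N}$ everywhere.

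The main obstacle is exactly this local $L^{2}$-integrability of $(\varphi^{*N})_{*}(\mu_i)$, because when $u$ or $v$ lies outside $U$ the measure $\mu_i$ cannot be chosen to be supported in $U(F)$. My plan is to split $\mu_i=\mu_i'+\mu_i''$ with $\mu_i'$ smoothly supported in $U(F)$ (contributing the already-known continuous term) and $\mu_i''$ supported near the non-(FRS) locus, and to control $(\varphi^{*N})_{*}(\mu_i'')$ by invoking the motivic $L^{1+\epsilon}$-estimates of Section~\ref{sec:Main-analytic-result}---in particular Corollary~\ref{Cor:Main model theoretic result for algebraic groups}---and then bootstrapping $L^{1+\epsilon}$ to $L^{2}$ via Young's convolution inequality, in the spirit of Lemma~\ref{lem: continuous function after enough convolutions}. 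Patching these two contributions in a translation-compatible manner on the non-abelian group $G(F)$ is the technical core of the generalization and is precisely what is alluded to by the remark that \emph{one needs to use a non-commutative Fourier transform}.
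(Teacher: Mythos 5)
Your decomposition of $X^{2N}$ as $X^N\times X^N$, with measures $\mu_1,\mu_2$ near the blocks $u=(x_1,\dots,x_N)$ and $v=(x_{N+1},\dots,x_{2N})$, is the wrong one, and the difficulty you flag at the end --- that $u$ and $v$ need not lie in the (FRS) locus of $\varphi^{*N}$ --- is fatal rather than a technicality to be patched. The patch you propose is unavailable for two independent reasons. First, this proposition is part of the reduction of the base field to $\rats$ (Proposition~\ref{prop:reduction to Q}), so the motivic machinery of Section~\ref{sec:Main-analytic-result} and Corollary~\ref{Cor:Main model theoretic result for algebraic groups} cannot yet be invoked: those results concern $\rats$-varieties and are quantified over all $F\in\Loc_>$, whereas here one works over a single local field $\Qp$ containing a finitely generated field $K'$, and no motivic presentation of the relevant pushforward over $K'$ has been established. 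Second, even granting an $L^{1+\epsilon}$ bound, Lemma~\ref{lem: continuous function after enough convolutions} only produces continuity after $N(\epsilon)$ self-convolutions with $N(\epsilon)$ depending on $\epsilon$, while the statement requires continuity after exactly $2N$ of them; there is no control ensuring $N(\epsilon)\leq 2N$.

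The paper sidesteps all of this by a decomposition that actually sees the diagonal hypothesis. Take $\mu_1,\dots,\mu_{2N}$ to be smooth non-negative measures on $X(\Zp)$ (not on $X^N$), each $\mu_i$ supported on a small open $V_i\ni x_i$ chosen so that $V_i\times\cdots\times V_i$ lies in a Zariski-open $U_i\subseteq X^N$ on which $\varphi^{*N}$ is (FRS); such $U_i$ exists precisely because the diagonal point $(x_i,\dots,x_i)$ is in the (FRS) locus. Then $\varphi^{*N}_*(\mu_i\times\cdots\times\mu_i)=(\varphi_*\mu_i)^{*N}$ has continuous density, hence lies in $L^2(G(\Zp))$; Plancherel (Theorem~\ref{Proposition Fourier L2}) places $\mathcal{F}(\varphi_*\mu_i)^N$ in $\mathcal{H}_2$, from which the paper places $\mathcal{F}(\varphi_*\mu_i)$ in $\mathcal{H}_{2N}$, and then the generalized H\"older inequality (Proposition~\ref{prop:(Generalization-of-Holder}) with $p_1=\cdots=p_{2N}=2N$ puts $\prod_{i=1}^{2N}\mathcal{F}(\varphi_*\mu_i)=\mathcal{F}\big(\varphi^{*2N}_*(\mu_1\times\cdots\times\mu_{2N})\big)$ in $\mathcal{H}_1$, giving continuous density and concluding via Theorem~\ref{Analytic condition for (FRS)}. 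The Fourier--Plancherel--H\"older skeleton you identified is exactly right, but it must be hung on the $2N$ factors $\varphi_*\mu_i$ with $\mu_i$ living on $X$, not on two pushforwards $(\varphi^{*N})_*\mu_i$ with $\mu_i$ living on $X^N$; only the former configuration lets every factor profit from the diagonal (FRS) assumption.
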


\begin{proof}
Let $x_{1},{\ldots},x_{2N}\in X(K'')$ for some finite extension $K''/K'$
and let $K'''$ be a finite extension of $K''$. Let $p$ be a prime
such that $x_{1},{\ldots},x_{2N}\in X(\Zp)$ and $K'''\subseteq\Qp$
(there exists infinitely many such primes, see \cite[Lemma 3.15]{GH19}).
Since $\varphi^{*N}$ is (FRS) at $(x_{i},{\ldots},x_{i})$ for any
$i$, there exist Zariski open neighborhoods $(x_{i},{\ldots},x_{i})\in U_{i}\subseteq X^{N}$
such that $\varphi^{*N}$ is (FRS) at each $U_{i}$. Note that $U_{i}(\Zp)$
contains an analytic neighborhood of the form $V_{i}\times\ldots\times V_{i}$,
where $x_{i}\in V_{i}$ is open in $X(\Zp)$. By Theorem \ref{Analytic condition for (FRS)},
since $\varphi^{*N}$ is (FRS) at $V_{i}\times\ldots\times V_{i}$,
there exists a non-negative smooth measure $\mu_{i}$, with $\mathrm{supp}(\mu_{i})=V_{i}$
such that the measure 
\[
\varphi_{*}^{*N}(\mu_{i}\times{\ldots}\times\mu_{i})=\varphi_{*}(\mu_{i})*{\ldots}*\varphi_{*}(\mu_{i})
\]
has continuous density with respect to the normalized Haar measure
on $G(\Zp)$. Consider the non-commutative Fourier transform $\mathcal{F}(\varphi_{*}(\mu_{i}))$
of the measure $\varphi_{*}(\mu_{i})$ on $G(\Zp)$. Since the density
of $\varphi_{*}^{*N}(\mu_{i}\times{\ldots}\times\mu_{i})$ is continuous,
it lies in $L^{2}(G(\ints_{p}))$. By Theorem \ref{Proposition Fourier L2}(2),
we have that $\mathcal{F}(\varphi_{*}^{*2N}(\mu_{i}\times{\ldots}\times\mu_{i}))$
is in $\mathcal{H}_{1}(\hat{G(\Zp)})$ for any $i$. This implies
$\mathcal{F}(\varphi_{*}(\mu_{i}))\in\mathcal{H}_{2N}(\hat{G(\Zp)})$.
By a generalization of H\"older's inequality (Proposition \ref{prop:(Generalization-of-Holder}),
we have 
\[
\mathcal{F}(\varphi_{*}^{*N}(\mu_{1}\times{\ldots}\times\mu_{2N}))=\prod_{i=1}^{2N}\mathcal{F}(\varphi_{*}(\mu_{i}))\in\mathcal{H}_{1}(\hat{G(\Zp)}).
\]
By Theorem \ref{Fourier transform of functions} it follows that $\varphi_{*}^{*2N}(\mu_{1}\times{\ldots}\times\mu_{2N})$
has continuous density, and by Theorem \ref{Analytic condition for (FRS)}
it follows that $\varphi^{*2N}$ is (FRS) at $(x_{1},{\ldots},x_{2N})$,
as required. 
\end{proof}
We can now assume that $\varphi:X\rightarrow G$ is a strongly dominant
$\rats$-morphism. The following analytic statement, which is a straightforward
generalization of \cite[Proposition 3.16]{GH19}, is the final ingredient
needed in order to prove Theorem \ref{Main result v2}: 
\begin{prop}[{{{See \cite[Proposition 3.16]{GH19}}}}]
\label{prop:reduction to an analytic} Let $X$ be a smooth $K$-variety,
$G$ be an algebraic $K$-group, $\varphi:X\to G$ be a strongly dominant
morphism and let $\mu=\{\mu_{F}\}_{F\in\mathrm{Loc}}$ be a motivic
measure on $X$ such that for every $F\in\mathrm{Loc}$, $\mu_{F}$
is a non-negative smooth measure and $\supp(\mu_{F})=X(\mathcal{O}_{F})$
(for existence of such a measure, see \cite[Proposition 3.14]{GH19}).
Assume that there exists $n\in\mathbb{N}$ such that the measure $\varphi_{*}^{*n}(\mu_{F}\times\ldots\times\mu_{F})$
has continuous density with respect to the normalized Haar measure
on $G(F)$ for $F\in\mathrm{Loc}_{>}$. Then the map $\varphi^{*n}:X\times\ldots\times X\to G$
is (FRS). 
\end{prop}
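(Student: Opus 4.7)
The plan is to verify, at each $\overline{K}$-point of $X^n$, condition (3) of the analytic characterization of the (FRS) property given in Theorem \ref{Analytic condition for (FRS)}; by Definition \ref{def:FRS}(2) this will imply that $\varphi^{*n}$ is (FRS). So fix $x = (x_1, \ldots, x_n) \in X^n(\overline{K})$, let $K'/K$ be a finite extension over which $x$ is defined, and let $K''/K'$ be an arbitrary finite extension. The task reduces to producing a local field $F \supseteq K''$ together with a non-negative smooth compactly supported measure on $X^n(F)$ which does not vanish at $x$ and whose pushforward under $\varphi^{*n}$ has continuous density.

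The natural candidate is the $n$-fold product measure $\mu_F \times \ldots \times \mu_F$. First, I would invoke the argument of \cite[Lemma 3.15]{GH} to produce a prime $p$ large enough to place the relevant local field in $\mathrm{Loc}_>$, together with a finite extension $F/\Qp$ into which $K''$ embeds and such that each $x_i$ lies in $X(\mathcal{O}_F)$. Since $\mu_F$ is smooth, non-negative, and has support exactly $X(\mathcal{O}_F)$, the product measure is smooth on the analytic manifold $X^n(F)$, has compact support $X(\mathcal{O}_F)^n$, and is strictly positive at $x$. By the standing hypothesis, its pushforward under $\varphi^{*n}$ has continuous density with respect to the normalized Haar measure on $G(\mathcal{O}_F)$.

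One small technical point that needs attention is that Theorem \ref{Analytic condition for (FRS)} is phrased in terms of smooth measures on $G(F)$, whereas the hypothesis provides continuous density against the normalized Haar measure on $G(\mathcal{O}_F)$. However, on any compact open subset of $G(F)$ the Haar measure is itself smooth in the sense of the theorem (locally of the form $|\omega|_F$ for a non-vanishing top form on $G$), so the two notions of continuous density coincide locally and condition (3) of the theorem is verified at $x$. The main obstacle is expected to be not mathematical depth but rather careful bookkeeping: one must reconcile the motivic parameter space $\mathrm{Loc}_>$ with the geometric (FRS) criterion, which insists on producing, for each finite extension $K''/K$, some $F \supseteq K''$ of arbitrary residue characteristic. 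The infinitely-many-primes argument from \cite{GH} bridges this gap exactly as in the proof of \cite[Proposition 3.16]{GH}, so no essentially new ingredient beyond the analytic criterion of (FRS) and the standing continuous-density hypothesis should be required.
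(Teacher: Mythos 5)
Your argument is correct and coincides with the intended one: the paper does not reprove this proposition but defers to \cite[Proposition 3.16]{GH}, whose proof is exactly the combination you describe — criterion (3) of Theorem \ref{Analytic condition for (FRS)} applied at each $\overline{K}$-point, the infinitely-many-primes embedding argument of \cite[Lemma 3.15]{GH} to land in $\mathrm{Loc}_>$ with the point in $X(\mathcal{O}_F)^n$, and the product measure $\mu_F\times\cdots\times\mu_F$ as the witness, with the Haar-measure-versus-smooth-measure compatibility handled as you note. No gap.
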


\begin{proof}[Proof of Theorem \ref{Main result v2}]
Let $\varphi:X\rightarrow G$ and let $\mu=\{\mu_{F}\}_{F\in\mathrm{Loc}}$
be a motivic measure on $X$ as in Proposition \ref{prop:reduction to an analytic}.
By Corollary \ref{Cor:Main model theoretic result for algebraic groups}
we can find $\epsilon>0$ such the the density of $\varphi_{*}(\mu_{F})$
lies in $L^{1+\epsilon}(G(\mathcal{O}_{F}),\lambda_{F})$, where $\lambda_{F}$
is the normalized Haar measure on $G(\mathcal{O}_{F})$, for every $F\in\mathrm{Loc}_{>}$.
By Lemma \ref{lem: continuous function after enough convolutions}
this implies there exists $N(\epsilon)\in\nats$ such that $\varphi_{*}^{*N(\epsilon)}(\mu_{F}\times\ldots\times\mu_{F})$
has continuous density. By Proposition \ref{prop:reduction to an analytic}
we are done. 
\end{proof}
\begin{proof}[Proof of Theorems \ref{main result for families} and \ref{FRS on complexity}]
Theorem \ref{main result for families} is a direct generalization
of \cite[Theorem 1.9]{GH19} (see \cite[Section 7]{GH19}). Part (1)
follows from \cite[Lemma  7.4, Lemma 7.5]{GH19}. The proof of (2)
is essentially the same as the proof of \cite[Theorem 7.1 (2)]{GH19},
where we replace the vector space $V$ with $G$, and use Theorem
\ref{Main result} instead of \cite[Theorem 1.7]{GH19}. Theorem \ref{FRS on complexity}
easily follows from \cite[Corollary 7.8]{GH19}.

One can also deduce Theorem \ref{main result for families}(2) from
Theorem \ref{Model theoretic result- families of varieties} (assuming
$K=\rats$). Indeed, let $\widetilde{\varphi}:\widetilde{X}\rightarrow G\times Y$
be a $Y$-morphism as in Theorem \ref{main result for families}.
Denote by $\pi_{\widetilde{X}}:\widetilde{X}\rightarrow Y$ and $\pi:G\times Y\rightarrow Y$
the structure maps. By Theorem \ref{main result for families}(1),
generic smoothness, and by Noetherian induction, we may assume that
$\pi_{\widetilde{X}}$ is a smooth morphism, with $\widetilde{X}$
and $Y$ smooth with invertible top forms $\omega_{\widetilde{X}}$
and $\omega_{Y}$ respectively, and that $\widetilde{\varphi}_{y}:\widetilde{X}_{y}\rightarrow G$
is strongly dominant for any $y\in Y$. Let $\omega_{G}$ be an invertible
top form on $G$. Let $\mu=\{1_{\widetilde{X}(\mathcal{O}_{F})}\left|\omega_{\widetilde{X}}\right|_{F}\}_{F\in\mathrm{Loc}}$
and consider the following family of motivic measures $\mu_{y}:=\left\{1_{\widetilde{X}_{y}(\mathcal{O}_{F})}\left|\frac{\omega_{\widetilde{X}}}{\pi_{\widetilde{X}}^{*}\omega_{Y}}\right|_{F}\right\}_{F\in\mathrm{Loc}}$.
In order to prove Theorem \ref{main result for families}(2), it is
enough by Proposition \ref{prop:reduction to an analytic} to find
an $\epsilon>0$, which is independent of $y$, such that $(\widetilde{\varphi}_{y})_{*}\mu_{y,F}=h_{y,F}\left|\omega_{G}\right|_{F}$,
with $h_{y,F}\in L^{1+\epsilon}(G(\mathcal{O}_{F}),\left|\omega_{G}\right|_{F})$
for every $F\in\mathrm{Loc}_{>}$. Indeed, if we choose $N:=2N(\epsilon)$
as in Lemma \ref{lem: continuous function after enough convolutions},
then Part (2) follows by Proposition \ref{prop:(Generalization-of-Holder}.

By \cite[Corollary 3.6]{AA16}) the measure $\widetilde{\varphi}_{*}\mu_F$
is absolutely continuous with respect to $\left|\omega_{G}\wedge\omega_{Y}\right|_{F}$
and $\widetilde{\varphi}_{*}\mu_{F}=f_{F}\cdot\left|\omega_{G}\wedge\omega_{Y}\right|_{F}$,
with 
\begin{align*}
f_{F}(g,y) & =\int_{(\widetilde{X}^{\mathrm{sm},\widetilde{\varphi}}\cap\widetilde{\varphi}^{-1}(g,y))(\mathcal{O}_{F})}\left|\frac{\omega_{\widetilde{X}}}{\widetilde{\varphi}^{*}(\omega_{G}\wedge\omega_{Y})}\right|_{F}\\
 & =\int_{(\widetilde{X}_{y}^{\mathrm{sm},\widetilde{\varphi}}\cap\widetilde{\varphi}_{y}^{-1}(g))(\mathcal{O}_{F})}\left|\frac{\omega_{\widetilde{X}}}{\widetilde{\varphi}^{*}\omega_{G}\wedge\pi_{\widetilde{X}}^{*}\omega_{Y})}\right|_{F}.
\end{align*}
Since $\widetilde{\varphi}_{y}$ is strongly dominant, we further
have by \cite[Corollary 3.6]{AA16} that $h_{y,F}\in L^{1}(G(\mathcal{O}_{F}),\left|\omega_{G}\right|_{F})$.
Setting $\displaystyle \eta:=\left(\frac{\omega_{\widetilde{X}}}{\pi_{\widetilde{X}}^{*}\omega_{Y}}\right)$,
we get 
\[
h_{y,F}(g)=\int_{(\widetilde{X}_{y}^{\mathrm{sm},\widetilde{\varphi}}\cap\widetilde{\varphi}_{y}^{-1}(g))(\mathcal{O}_{F})}\left|\frac{\eta}{\widetilde{\varphi}^{*}\omega_{G}}\right|_{F}=f_{F}(g,y).
\]
By Theorem \ref{Model theoretic result- families of varieties}, we
get $h_{y,F}=f_{F}|_{G\times\{y\}}\in L^{1+\epsilon}(G(\mathcal{O}_{F}),\left|\omega_{G}\right|_{F})$
where $\epsilon>0$ does not depend on $y$. 
\end{proof}

\appendix

\section{On the decay of Fourier transform and $L^{p}$-integrability (joint
with Gady Kozma)}

\label{Appendix: L1+epsilon} In this appendix we construct a compactly
supported function $g(t)\in L^{1}(\reals)$ whose Fourier transform
decays faster than $|x|^{\alpha}$ for some $\alpha<0$ but $g\notin L^{1+\epsilon}(\reals)$
for every $\epsilon>0$. We consider $F=\reals$, but a similar construction
should work for any local field $F$. For a number $N\in\mathbb{N}$,
and a tuple $a=(a_{j})_{j=1}^{N}$ where each $a_{j}\in\{\pm1\}$
define 
\[
h_{N,a}(x):=\bigg(\sum_{j=1}^{N}a_{j}\delta_{n_{j}}\bigg)*\sinc\Big(\frac{2x}{N}\Big),
\]
where $n_{j}:=-N^{2}+(2j-1)N$, and where $\delta_{x}$ is the Dirac
delta distribution, and $\mathrm{sinc}(x)=\frac{\mathrm{sin}(\pi x)}{\pi x}$.
Recall that 
\[
\mathcal{F}\Big(\sinc\Big(\frac{2x}{N}\Big)\Big)=
\frac{N}{2}\cdot \mathrm{1}_{[-\frac{1}{N},\frac{1}{N}]}\text{ and }\mathcal{F}(\delta_{n_{j}})(t)=e^{-2\pi in_{j}t}~\forall1\leq j\leq N.
\]

\begin{prop}
\label{Prop:(A.1)} There exists $M\in\mathbb{N}$, such that for
any $\epsilon>0$ and any $M<N\in\mathbb{N}$ there exist $a_{0}^{N}=(a_{0j})_{j=1}^{N}$,
and $0<C_{1},C_{2}(\epsilon)\in\mathbb{R}$ such that the following
hold: 
\begin{enumerate}
\item $\left\Vert h_{N,a_{0}^{N}}\right\Vert _{\infty}<10\mathrm{log}(N)$
and for every $x$ with $|x|>N^{2}$, we have $|h_{N,a_{0}^{N}}(x)|\le \frac{N^{2}}{|x|-N^{2}}$. 
\vspace{0.2cm}
\item $\mathcal{F}(h_{N,a_{0}^{N}})$ is supported on $[-\frac{1}{N},\frac{1}{N}]$. 
\vspace{0.2cm}
\item $\left\Vert \mathcal{F}(h_{N,a_{0}^{N}})\right\Vert _{\infty}\le N^{2}$. 
\vspace{0.2cm}
\item $\left\Vert \mathcal{F}(h_{N,a_{0}^{N}})\right\Vert _{1}<(10+\epsilon)\sqrt{N\log N}$. 
\vspace{0.2cm}
\item $C_{1}N^{\frac{1}{2}+\frac{1}{2}\epsilon}<\left\Vert \mathcal{F}(h_{N,a_{0}^{N}})\right\Vert _{1+\epsilon}<C_{2}(\epsilon)N^{\frac{1}{2}+\epsilon}$. 
\end{enumerate}
\end{prop}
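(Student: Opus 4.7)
The plan is to establish the existence of $a_0^N$ by the probabilistic method: sample $(a_j)_{j=1}^N$ as i.i.d.\ Rademacher random variables and show that, for $N$ large enough, statements (1)--(5) hold simultaneously with positive probability, so a deterministic choice $a_0^N$ exists in that event. It will be convenient to write $P(t):=\sum_{j=1}^N a_j e^{-2\pi i n_j t}$, so that $\mathcal{F}(h_{N,a})(t) = (N/2)\mathbf{1}_{[-1/N,1/N]}(t)\cdot P(t)$.

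Statements (1)--(3) in fact hold deterministically for every $a\in\{\pm 1\}^N$. Claim (2) is immediate from the factorization above, and the trivial bound $|P|\le N$ yields (3). For (1), I expand $h_{N,a}(x)=\sum_{j}a_j\sinc(2(x-n_j)/N)$, use $|\sinc(y)|\le \min(1,1/(\pi|y|))$, and exploit the arithmetic-progression structure $n_{j+1}-n_j = 2N$: the sum over $j\neq k$ becomes a harmonic tail of size $O(\log N)$, yielding the $10\log N$ bound uniformly in $x$ for $N$ large; for $|x|>N^2$ each term is bounded by $N/(\pi(|x|-N^2))$, and the $N$ summands add up to $\le N^2/(|x|-N^2)$.

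The heart of the proposition is (4). Since
\[
\|\mathcal{F}(h_{N,a})\|_1 = \int_{-1/N}^{1/N}\tfrac{N}{2}|P(t)|\,dt \le \|P\|_{L^\infty([-1/N,1/N])},
\]
it suffices to prove $\|P\|_\infty < (10+\epsilon)\sqrt{N\log N}$ with probability $1-o(1)$. This is a Salem--Zygmund-type supremum estimate: pointwise, Hoeffding's inequality gives the sub-Gaussian tail $\Pr(|P(t)|>u)\le 2\exp(-u^2/(2N))$; Bernstein's inequality bounds the derivative of $P$ in terms of its bandwidth ($\le 2N^2$) times $\|P\|_\infty$, letting one pass from a polynomially fine grid in $[-1/N,1/N]$ to the full supremum; a union bound with $u\asymp\sqrt{N\log N}$, together with sharp constants from the classical Salem--Zygmund argument, closes the estimate.

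Finally, (5) follows by interpolation from (3), (4), and Plancherel. The $L^2$-norm is pinned: $\|\mathcal{F}(h_{N,a})\|_2^2 = \|h_{N,a}\|_2^2 = N^2/2$, since the shifts $\sinc(2(x-n_j)/N)$ are pairwise $L^2$-orthogonal (consecutive $n_j$'s differ by $2N$, a multiple of $N/2$, the spacing of zeros of $\sinc(2x/N)$). The upper bound then follows from $\|\mathcal{F}(h)\|_{1+\epsilon}^{1+\epsilon}\le \|\mathcal{F}(h)\|_\infty^{\epsilon}\|\mathcal{F}(h)\|_1$ combined with (3) and (4). For the lower bound I use the log-convexity of $L^p$-norms, $\|f\|_2 \le \|f\|_{1+\epsilon}^{(1+\epsilon)/2}\|f\|_\infty^{(1-\epsilon)/2}$ (valid for $\epsilon\le 1$), combined with the finer bound $\|\mathcal{F}(h_{N,a_0^N})\|_\infty\le (N/2)\|P\|_\infty \lesssim N^{3/2}\sqrt{\log N}$ from the Salem--Zygmund estimate, yielding $\|\mathcal{F}(h_{N,a_0^N})\|_{1+\epsilon}\ge C_1 N^{1/2+\epsilon/2}$ for $N$ large. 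The main obstacle is (4): extracting a Salem--Zygmund-type supremum bound with the specific constant $10+\epsilon$, for the non-standard frequency set $\{n_j\}\subset[-N^2,N^2]$ and on the short window $[-1/N,1/N]$; all other estimates are either deterministic or follow from standard interpolation once (4) is in hand.
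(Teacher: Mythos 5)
Your overall strategy is sound, and for the core probabilistic estimate (4) it coincides with the paper's: both prove a Salem--Zygmund-type bound $\sup_{|t|\le 1/N}|P(t)|<(10+\epsilon)\sqrt{N\log N}$ with probability $1-o(1)$ via a pointwise sub-Gaussian tail, a grid of polynomial mesh in $[-\frac1N,\frac1N]$, a derivative/bandwidth bound of order $N^{3}$, and a union bound; items (1)--(3) are deterministic in both treatments. Where you genuinely diverge is the lower bound in (5). The paper fixes $t$, uses the central limit theorem to get $\mathbb{E}\,|P(t)|^{1+\epsilon}\gtrsim N^{(1+\epsilon)/2}$, and then runs a reverse-Markov argument on $Y(a)=\int_{-1/N}^{1/N}|P|^{1+\epsilon}\,dt$ (using $Y\le 2N^{\epsilon}$) to show the good event for $Y$ has probability at least $\tfrac12 C_{1}N^{-(1+\epsilon)/2}$, which must then be intersected with the complement of the polynomially small bad event from (4). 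You instead observe that $\|\mathcal{F}(h_{N,a})\|_{2}^{2}=\|h_{N,a}\|_{2}^{2}=N^{2}/2$ exactly (the sinc translates are orthogonal since $n_{j+1}-n_{j}=2N$ is a multiple of the zero spacing $N/2$), and combine this with log-convexity $\|f\|_{2}\le\|f\|_{1+\epsilon}^{(1+\epsilon)/2}\|f\|_{\infty}^{(1-\epsilon)/2}$ and the bound $\|\mathcal{F}(h)\|_{\infty}\le\tfrac N2\sup|P|\lesssim N^{3/2}\sqrt{\log N}$, valid on the same event as (4). This yields $\|\mathcal{F}(h)\|_{1+\epsilon}\gtrsim N^{\frac{1+3\epsilon}{2(1+\epsilon)}}(\log N)^{-\frac{1-\epsilon}{2(1+\epsilon)}}$, which beats $N^{\frac12+\frac\epsilon2}$ for $0<\epsilon<1$, exactly the exponent and the range the paper's own final step requires (and all that the application needs). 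Your route is arguably cleaner: every item becomes a deterministic consequence of the single high-probability event, so no CLT and no intersection of a high-probability event with a barely-positive-probability one is needed; the cost is a harmless $\epsilon$-dependence of $C_{1}$ and of the threshold (to absorb the logarithm), a dependence which is implicitly present in the paper as well.

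One step as written does not work: your upper bound in (5). Interpolating $\|\mathcal{F}(h)\|_{1+\epsilon}^{1+\epsilon}\le\|\mathcal{F}(h)\|_{\infty}^{\epsilon}\,\|\mathcal{F}(h)\|_{1}$ with item (3), i.e.\ $\|\mathcal{F}(h)\|_{\infty}\le N^{2}$, and item (4) gives only $\|\mathcal{F}(h)\|_{1+\epsilon}\lesssim N^{\frac{1/2+2\epsilon}{1+\epsilon}}$ up to logarithms, and $\frac{1/2+2\epsilon}{1+\epsilon}>\frac12+\epsilon$ precisely when $\epsilon<\tfrac12$ --- the regime that matters for the application --- so this does not produce $C_{2}(\epsilon)N^{\frac12+\epsilon}$ there. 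The repair is immediate with tools you already invoke: either interpolate with the finer bound $\|\mathcal{F}(h)\|_{\infty}\le\tfrac N2(10+\epsilon)\sqrt{N\log N}$ from the Salem--Zygmund event (this gives exponent $\frac{1/2+3\epsilon/2}{1+\epsilon}<\frac12+\epsilon$, with room to absorb the logs), or argue directly, as the paper does, that $\int_{-1/N}^{1/N}\bigl(\tfrac N2|P|\bigr)^{1+\epsilon}dt\le\tfrac2N\bigl(\tfrac N2\bigr)^{1+\epsilon}\bigl(\sup|P|\bigr)^{1+\epsilon}$, whence $\|\mathcal{F}(h)\|_{1+\epsilon}\lesssim_{\epsilon} N^{\frac12+\frac{\epsilon}{1+\epsilon}}\sqrt{\log N}\le C_{2}(\epsilon)N^{\frac12+\epsilon}$.
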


\begin{proof}
Items (1) to (3) hold for $N$ large enough and any choice of $a=(a_{j})_{j=1}^{N}$,
by direct computations. For items (4) and (5), given $t\in\{\frac{m}{N^{4}}\}_{m=-N^{3}}^{N^{3}},$
we consider the complex-valued random variable $X_{N,t}(a):=\sum\limits _{j=1}^{N}a_{j}\cdot e^{-2\pi in_{j}t},$
with respect to the uniform probability on the set of $N$-tuples
$a=(a_{j})_{j=1}^{N}$. By Bernstein's inequality, applied to the
real and imaginary parts of $X_{N,t}(a)$, we deduce that: 
\[
\mathrm{Prob}\left(\left|X_{N,t}\right|\geq s\right)\leq4\exp\left(-\frac{s^{2}}{4N}\right).
\]
\vspace{0.2cm}
Choosing $s=(4+\epsilon)\sqrt{N\log N}$ and by the union bound, we
have: 
\[
\mathrm{Prob}\left(\exists t\in\left\{ \frac{m}{N^{4}}\right\} _{m=-N^{3}}^{N^{3}}:\left|X_{N,t}(a)\right|\geq(4+\epsilon)\sqrt{N\log N}\right)\leq8N^{-(1+2\epsilon)}\tag{\ensuremath{\star}}.
\]
Notice that for every $1\leq j\leq N$ we have 
\[
\underset{t\in[-1,1]}{\mathrm{sup}}\left|\frac{d}{dt}\sum_{j=1}^{N}a_{j}e^{-2\pi in_{j}t}\right|\leq\sum_{j=1}^{N}\underset{t\in[-\frac{1}{N},\frac{1}{N}]}{\mathrm{sup}}\left|\left(-2\pi in_{j}\right)a_{j}e^{-2\pi in_{j}t}\right|\leq2\pi N^{3},
\]
and therefore for any $a=(a_{j})_{j=1}^{N}$, and any $t\in[\frac{m}{N^{4}},\frac{m+1}{N^{4}}]$:
\[
\left|\sum_{j=1}^{N}a_{j}e^{-2\pi in_{j}t}-\sum_{j=1}^{N}a_{j}e^{-2\pi in_{j}\frac{m}{N^{4}}}\right|<2\pi N^{3}\frac{(m+1)-m}{N^{4}}=\frac{2\pi}{N}\tag{\ensuremath{\star\star}}.
\]
By $(\star)$ and $(\star\star)$ we deduce (for $N$ large enough):
\begin{align*}
\mathrm{Prob} & \left(\forall t\in[-\frac{1}{N},\frac{1}{N}]:\left|X_{N,t}(a)\right|<(10+\epsilon)\sqrt{N\log N}\right)\\
 & \geq\mathrm{Prob}\left(\forall t\in\left\{ \frac{m}{N^{4}}\right\} _{m=-N^{3}}^{N^{3}}:\left|X_{N,t}(a)\right|<(4+\epsilon)\sqrt{N\log N}\right)\geq1-8N^{-(1+2\epsilon)}.
\end{align*}
We get the following (for $N$ large enough): 
\begin{align*}
\mathrm{Prob} & \left(\left\Vert \mathcal{F}(h_{N,a})\right\Vert _{1}<(10+\epsilon)\sqrt{N\log N}\right)\\
 & =\mathrm{Prob}\left(\int_{-\frac{1}{N}}^{\frac{1}{N}}\left|\sum_{j=1}^{N}a_{j}e^{-2\pi in_{j}t}\cdot\frac{N}{2}\right|dt<(10+\epsilon)\sqrt{N\log N}\right)\geq1-8N^{-(1+2\epsilon)}.
\end{align*}
Similarly, there exists a constant $C_{2}(\epsilon)$ such that for
$N$ large enough: 
\begin{equation}
\mathrm{Prob}\left(\left\{ \left\Vert \mathcal{F}(h_{N,a})\right\Vert _{1}<(10+\epsilon)\sqrt{N\log N}\right\} \cap\left\{ \left\Vert \mathcal{F}(h_{N,a})\right\Vert _{1+\epsilon}<C_{2}(\epsilon)N^{\frac{1}{2}+\epsilon}\right\} \right)\geq1-8N^{-(1+2\epsilon)}.\label{eq:(A.1)}
\end{equation}
Consider the random variable $Y(a):=\int\limits _{-\frac{1}{N}}^{\frac{1}{N}}\left|\sum\limits _{j=1}^{N}a_{j}e^{-2\pi in_{j}t}\right|^{1+\epsilon}dt$.
By the central limit theorem it can be verified that 
\[
\mathbb{E}(Y)=\int_{-\frac{1}{N}}^{\frac{1}{N}}\mathbb{E}\left(\left|\sum_{j=1}^{N}a_{j}e^{-2\pi in_{j}t}\right|^{1+\epsilon}\right)dt>\int_{-\frac{1}{N}}^{\frac{1}{N}}C_{1}N^{\frac{1}{2}(1+\epsilon)}dt=2C_{1}N^{\frac{1}{2}(\epsilon-1)},
\]
for some $1>C_{1}>0$. Notice that $0\leq Y(a)\leq2N^{\epsilon}$.
Hence, 
\begin{equation}
\mathrm{Prob}\left(Y(a)<C_{1}N^{\frac{1}{2}(\epsilon-1)}\right)\leq\mathrm{Prob}\left(Y(a)<\frac{1}{2}\mathbb{E}(Y)\right)\leq1-\frac{\mathbb{E}(Y)}{4N^{\epsilon}}<1-\frac{1}{2}C_{1}N^{-\frac{1}{2}(\epsilon+1)}.\label{eq:(A.2)}
\end{equation}
For large enough $N$ we have $\frac{1}{2}C_{1}N^{-\frac{1}{2}(1+\epsilon)}>8N^{-(1+2\epsilon)}$
and therefore by (\ref{eq:(A.1)}) and (\ref{eq:(A.2)}) we can find
$a_{0}^{N}=(a_{0j})_{j=1}^{N}$ such that $\left\Vert \mathcal{F}(h_{N,a_{0}^{N}})\right\Vert _{1}<(10+\epsilon)\sqrt{N\mathrm{log}N}$,
$\left\Vert \mathcal{F}(h_{N,a_{0}^{N}})\right\Vert _{1+\epsilon}<C_{2}(\epsilon)N^{\frac{1}{2}+\epsilon}$
and furthermore $Y(a_{0}^{N})\geq C_{1}N^{\frac{1}{2}(\epsilon-1)}$.
In particular, 
\[
\left\Vert \mathcal{F}(h_{N,a_{0}^{N}})\right\Vert _{1+\epsilon}=\left\Vert \sum_{j=1}^{N}a_{0j}e^{-2\pi in_{j}t}\frac{N}{2}\cdot1_{[-\frac{1}{N},\frac{1}{N}]}\right\Vert _{1+\epsilon}=\frac{N}{2}\cdot\left(Y(a_{0}^{N})\right)^{\frac{1}{1+\epsilon}}>C_{1}N^{1-\frac{1}{2}\frac{(1-\epsilon)}{1+\epsilon}}>C_{1}N^{\frac{1}{2}+\frac{1}{2}\epsilon}.
\]
\end{proof}
Now set $w(l):=2^{2\cdot4^{l}}$ and take $h_{w(l)}:=h_{w(l),a_{0}^{w(l)}}$
for $a_{0}^{w(l)}$ as in Proposition \ref{Prop:(A.1)} (for $l$
such that $w(l)<M$, take $h_{w(l)}=0$). Set 
\[
H_{l}:=\frac{h_{w(l)}(x)}{\sqrt{w(l)}\log(w(l))\cdot l^{2}}
\]
and consider $f(x):=\sum\limits _{l=1}^{\infty}H_{l}$. The first
property of $h_{n,a_{0}^{N}}$ gives 
\[
|H_{l}(x)|\le(\max\{1,|x|\})^{-1/8}w(l)^{-1/8}.
\]
Hence the sum converges (both pointwise and as Schwartz distributions),
$f$ is well defined and satisfies $|f(x)|\le C|x|^{-1/8}$ and $g:=\mathcal{F}(f)=\sum\mathcal{F}(H_{l})$.
By the second property of $h_{n,a_{0}^{N}}$ $g$ is compactly supported.
We will now show that it is $L^{1}$ but not $L^{1+\epsilon}$: 
\[
\left\Vert g\right\Vert _{1}\leq\sum_{l=1}^{\infty}\left\Vert \mathcal{F}(H_{l})\right\Vert _{1}<\sum_{l=1}^{\infty}\frac{1}{l^{2}}\cdot\frac{(10+\epsilon)\sqrt{w(l)}\log(w(l))}{\sqrt{w(l)}\log(w(l))}=\sum_{l=1}^{\infty}\frac{(10+\epsilon)}{l^{2}}<\infty.
\]
By Proposition \ref{Prop:(A.1)}, there exists $D_{1},D_{2}\in\reals_{>0}$
such that for any $l$ large enough 
\begin{equation}
D_{2}\cdot2^{2\cdot4^{l}\cdot\epsilon}\geq\left\Vert \mathcal{F}(H_{l})\right\Vert _{1+\epsilon}\geq\frac{C_{1}w(l)^{\frac{1}{2}+\frac{1}{2}\epsilon}}{l^{2}\sqrt{w(l)}\log(w(l))}\geq D_{1}\cdot2^{3\cdot4^{l-1}\cdot\epsilon}>4\left\Vert \sum_{j=1}^{l-1}\mathcal{F}(H_{j})\right\Vert _{1+\epsilon}.\label{eq:(A.3)}
\end{equation}
Recall that for each $l$, $\mathcal{F}(H_{l})$ is supported on $[-\frac{1}{w(l)},\frac{1}{w(l)}]$.
Denote $S_{l}:=[-\frac{1}{w(l)},-\frac{1}{w(l+1)})\cup(\frac{1}{w(l+1)},\frac{1}{w(l)}]$
and notice that since $\left|\mathcal{F}(H_{l})\right|^{1+\epsilon}$
is bounded by $w(l)^{2(1+\epsilon)}\ll w(l+1)$ it follows that 
\begin{equation}
\left\Vert \mathrm{1}_{S_{l}}\cdot\mathcal{F}(H_{l})\right\Vert _{1+\epsilon}^{1+\epsilon}>\frac{1}{2}\left\Vert \mathcal{F}(H_{l})\right\Vert _{1+\epsilon}^{1+\epsilon},\label{eq:(A.4)}
\end{equation}
for $l$ large enough. Notice that $\mathcal{F}(H_{j})|_{S_{l}}=0$
for any $j>l$. Choose $l_{0}$ such that (\ref{eq:(A.3)}) and (\ref{eq:(A.4)})
for any $l\geq l_{0}$. To show that $g$ is not $L^{1+\epsilon}$-integrable,
it is enough to show that $\sum\limits _{l=l_{0}}^{\infty}\mathcal{F}(H_{l})$
is not $L^{1+\epsilon}$-integrable. Finally, by the fact that $\left|~ \cdot ~\right|^{1+\epsilon}$
is a quasi-norm, by (\ref{eq:(A.3)}) and by (\ref{eq:(A.4)}), we
have: 
\begin{align*}
\left\Vert \sum\limits _{l=l_{0}}^{\infty}\mathcal{F}(H_{l})\right\Vert _{1+\epsilon}^{1+\epsilon} & \ge\sum_{j=l_{0}}^{\infty}\int_{S_{j}}\left|\sum\limits _{l=l_{0}}^{\infty}\mathcal{F}(H_{l})\right|^{1+\epsilon}=\sum_{j=l_{0}}^{\infty}\int_{S_{j}}\left|\sum\limits _{l=l_{0}}^{j}\mathcal{F}(H_{l})\right|^{1+\epsilon}\\
 & \geq\sum_{j=l_{0}}^{\infty}\left(2^{-\epsilon}\int_{S_{j}}\left|\mathcal{F}(H_{j})\right|^{1+\epsilon}-\left\Vert \sum_{l=l_{0}}^{j-1}\mathcal{F}(H_{l})\right\Vert _{1+\epsilon}^{1+\epsilon}\right)\\
 & >\sum_{j=l_{0}}^{\infty}\left(2^{-(1+\epsilon)}\left\Vert \mathcal{F}(H_{j})\right\Vert _{1+\epsilon}^{1+\epsilon}-\left\Vert \sum_{l=l_{0}}^{j-1}\mathcal{F}(H_{l})\right\Vert _{1+\epsilon}^{1+\epsilon}\right)\geq\sum_{j=l_{0}}^{\infty}2^{-(2+\epsilon)}\left\Vert \mathcal{F}(H_{j})\right\Vert _{1+\epsilon}^{1+\epsilon}=\infty,
\end{align*}
and thus $g$ is not $L^{1+\epsilon}$-integrable.

\bibliographystyle{alpha}
\bibliography{bibfile}

\end{document}